\begin{document}

\newcommand{\V}{{\cal V}}      
\renewcommand{\O}{{\cal O}}
\newcommand{\LL}{\cal L}
\newcommand{\Ext}{\hbox{{\rm Ext}}}
\newcommand{\depth}{\hbox{{\rm depth}}}
\newcommand{\pd}{{\hbox{{\rm pd}}}}
\newcommand{\Tor}{\hbox{Tor}}
\newcommand{\Hom}{\hbox{Hom}}
\newcommand{\Proj}{\hbox{Proj}}
\newcommand{\GrMod}{\hbox{GrMod}}
\newcommand{\grmod}{\hbox{gr-mod}}
\newcommand{\tors}{\hbox{tors}}
\newcommand{\rank}{\hbox{{\rm rank}}}
\newcommand{\End}{\hbox{End}}
\newcommand{\GKdim}{\hbox{GKdim}}
\newcommand{\im}{\hbox{im}}
\renewcommand{\ker}{\hbox{ker}}
\newcommand{\coker}{\ensuremath{\operatorname{coker}}}
\newcommand{\isom}{\cong}
\newcommand{\lk}{\text{link}}
\newcommand{\soc}{\text{soc}}

\newcommand{\lonto}{{\protect \longrightarrow\!\!\!\!\!\!\!\!\longrightarrow}}

\newcommand{\m}{{\mu}}
\newcommand{\gl}{{\frak g}{\frak l}}
\newcommand{\ssl}{{\frak s}{\frak l}}
\newcommand{\I}{\mathfrak{I}}
\newcommand{\A}{\mathfrak{A}}
\newcommand{\G}{\mathfrak{G}}

\newcommand{\ds}{\displaystyle}
\newcommand{\s}{\sigma}
\renewcommand{\l}{\lambda}
\renewcommand{\a}{\alpha}
\renewcommand{\b}{\beta}
\newcommand{\g}{\gamma}
\newcommand{\z}{\zeta}
\newcommand{\e}{\varepsilon}
\newcommand{\D}{\Delta}
\renewcommand{\d}{\delta}
\newcommand{\p}{\rho}
\renewcommand{\t}{\tau}

\newcommand{\C}{{\mathbb C}}
\newcommand{\N}{{\mathbb N}}
\newcommand{\Z}{{\mathbb Z}}
\newcommand{\ZZ}{{\mathbb Z}}
\newcommand{\K}{{\mathcal K}}
\newcommand{\F}{{\mathcal F}}
\newcommand{\cS}{\mathcal S}

\newcommand{\rowxy}{(x\ y)}
\newcommand{\colxy}{ \left({\begin{array}{c} x \\ y \end{array}}\right)}
\newcommand{\scolxy}{\left({\begin{smallmatrix} x \\ y
\end{smallmatrix}}\right)}

\renewcommand{\P}{{\Bbb P}}

\newcommand{\la}{\langle}
\newcommand{\ra}{\rangle}
\newcommand{\tensor}{\otimes}

\newtheorem{introthm}{Theorem}
\newtheorem{thm}{Theorem}[section]
\newtheorem{lemma}[thm]{Lemma}
\newtheorem{cor}[thm]{Corollary}
\newtheorem{prop}[thm]{Proposition}

\renewcommand{\theintrothm}{\Alph{introthm}}

\theoremstyle{definition}
\newtheorem{defn}[thm]{Definition}
\newtheorem{notn}[thm]{Notation}
\newtheorem{ex}[thm]{Example}
\newtheorem{rmk}[thm]{Remark}
\newtheorem{rmks}[thm]{Remarks}
\newtheorem{note}[thm]{Note}
\newtheorem{example}[thm]{Example}
\newtheorem{problem}[thm]{Problem}
\newtheorem{ques}[thm]{Question}
\newtheorem{conj}[thm]{Conjecture}
\newtheorem{cons}[thm]{Construction}
\newtheorem{convention}[thm]{Convention}
\newtheorem*{property*}{Property (F)}
\newtheorem{thingy}[thm]{}

\theoremstyle{remark}
\newtheorem*{rmk*}{Remark}

\newcommand{\onto}{{\protect \rightarrow\!\!\!\!\!\rightarrow}}
\newcommand{\donto}{\put(0,-2){$|$}\put(-1.3,-12){$\downarrow$}{\put(-1.3,-14.5) 

{$\downarrow$}}}

\newcounter{letter}
\renewcommand{\theletter}{\rom{(}\alph{letter}\rom{)}}

\newenvironment{lcase}{\begin{list}{~~~~\theletter} {\usecounter{letter}
\setlength{\labelwidth4ex}{\leftmargin6ex}}}{\end{list}}

\newcounter{rnum}
\renewcommand{\thernum}{\rom{(}\roman{rnum}\rom{)}}

\newenvironment{lnum}{\begin{list}{~~~~\thernum}{\usecounter{rnum}
\setlength{\labelwidth4ex}{\leftmargin6ex}}}{\end{list}}


\title[Twisted matrix factorizations]{Periodic free resolutions from twisted matrix factorizations}

\subjclass[2010]{Primary: 16E05, 16E35, 16E65} 

\keywords{ Matrix factorization, Zhang twist, Singularity category, Minimal free resolution, Maximal Cohen-Macaulay }

\author[  Cassidy, Conner, Kirkman, Moore ]{ }

\begin{abstract}
  The notion of a matrix factorization was introduced by Eisenbud in
  the commutative case in his study of bounded (periodic) free
  resolutions over complete intersections.  Since then, matrix
  factorizations have appeared in a number of applications.  In this work,
  we extend the notion of (homogeneous) matrix factorizations to
  regular normal elements of connected graded algebras over a field.

  Next, we relate the category of twisted matrix factorizations to an
  element over a ring and certain Zhang twists.  We also show that in
  the AS-regular setting, every sufficiently high syzygy module is the
  cokernel of some twisted matrix factorization.  Furthermore, we show
  that in this setting there is an equivalence of categories between
  the homotopy category of twisted matrix factorizations and the
  singularity category of the hypersurface, following work of Orlov.
\end{abstract}

\maketitle

\begin{center}

Thomas Cassidy \\
\bigskip

Department of Mathematics\\ 
Bucknell University\\
Lewisburg, PA 17837\\
\bigskip
\bigskip

\vskip-.1in
Andrew Conner \\
Ellen Kirkman\\
W. Frank Moore\\
\bigskip

Department of Mathematics\\
Wake Forest University\\
Winston-Salem, NC 27109\\

\end{center}

\setcounter{page}{1}

\thispagestyle{empty}

\vspace{0.2in}

\bigskip

\section*{Introduction}

The notion of a matrix factorization was introduced by Eisenbud
\cite{Eis} in the commutative case in his study of bounded (periodic)
free resolutions over complete intersections.  Since then, matrix
factorizations have appeared in a number of applications, including string
theory \cite{Asp,KapLi}, singularity categories \cite{Orlov2, Orlov1},
representation theory of Cohen-Macaulay modules \cite{BuGS,Kn}, and
other topics \cite{DyckMurf,PolVain}.  Recently, Eisenbud and Peeva
also extended the notion of matrix factorizations to higher
codimension complete intersections \cite{EisPeev}.

There are several candidates for the notion of a complete intersection
in the case of a non-commutative algebra, based on their numerous
characterizations in the commutative case \cite{FT,FHT,Gull1,Gull2}.
One possible approach is to first understand the hypersurface case by
studying matrix factorizations in the non-commutative setting.  In
this work, we extend the notion of (homogeneous) matrix factorizations
to regular normal elements of connected graded algebras over a field.

To state our results below, we assume that $A$ is a connected,
$\N$-graded, locally finite dimensional algebra over a field $k$.  We
also fix a homogeneous normal regular element $f \in A_+ =
\bigoplus_{n > 0} A_n$ and set $B = A / (f)$.  The regularity and
normality of $f$ provide us with a graded automorphism $\sigma$ of $A$
which we incorporate into the definition of matrix factorization.  The
use of $\sigma$ to modify ring actions is the reason for our
``twisted'' terminology; see Definition \ref{defn:MF}.

Our first main result shows that just as in the commutative case,
(reduced) twisted matrix factorizations give rise to (minimal)
resolutions.

\begin{introthm}[Proposition \ref{exactness}] \label{thmA} A twisted
  left matrix factorization $(\varphi,\tau)$ of $f$ gives rise to a complex
  $\mathbf{\Omega}(\varphi,\tau)$ of free left $B$-modules which is a graded free
  resolution of $\coker \varphi$ as a left $B$-module.  If the twisted matrix
  factorization $(\varphi,\tau)$ is reduced (see Definition
  \ref{defn:redMF}), then the graded free resolution is minimal.  If
  the order of $\sigma$ is finite, then the resolution is periodic of
  period at most twice the order of $\sigma$.
\end{introthm}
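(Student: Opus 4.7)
The plan is to mirror Eisenbud's original argument for commutative matrix factorizations, carefully tracking how the automorphism $\sigma$ modifies module structures along the way. First I would spell out the construction of $\mathbf{\Omega}(\varphi,\tau)$: starting from the twisted matrix factorization $\varphi : F \to G$ and $\tau : G \to F$ with the appropriate $\sigma$-twists, splice the reductions $\bar\varphi$ and $\bar\tau$ modulo $f$ together into an infinite sequence of free $B$-modules alternating $\cdots \to \bar F \to \bar G \to \bar F \to \bar G$ (each term retwisted by a power of $\sigma$ dictated by Definition \ref{defn:MF}), augmented onto $\coker\bar\varphi$. The complex property $\bar\varphi\,\bar\tau = 0 = \bar\tau\,\bar\varphi$ then follows immediately from the defining equations $\varphi\tau = f\cdot\mathrm{id}$ and $\tau\varphi = f\cdot\mathrm{id}$ of a twisted matrix factorization, since $f$ vanishes in $B$.

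The heart of the argument is exactness. Fix a spot in the complex and suppose $\bar x$ is a cycle there, so $\bar\tau(\bar x) = 0$ (the case starting with $\bar\varphi$ is symmetric). Lift $\bar x$ to an element $\tilde x$ of the corresponding free $A$-module; then $\tau(\tilde x) = f\,\tilde y$ for some $\tilde y$. Applying $\varphi$ (with the appropriate $\sigma$-twist compensating for the normality $fa = \sigma(a)f$) gives $\varphi\tau(\tilde x) = f\,\varphi(\tilde y)$. By the defining identity $\varphi\tau = f\cdot\mathrm{id}$, the left-hand side equals $f\,\tilde x$, so $f\bigl(\tilde x - \varphi(\tilde y)\bigr) = 0$. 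Regularity of $f$ in $A$, which extends verbatim to any free $A$-module, yields $\tilde x = \varphi(\tilde y)$, so $\bar x = \bar\varphi(\bar y)$. This gives exactness at the chosen spot; the analogous argument with the roles of $\varphi$ and $\tau$ reversed handles the remaining spots.

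Minimality under the reducedness hypothesis is essentially formal: by Definition \ref{defn:redMF} the matrix entries of $\varphi$ and $\tau$ lie in $A_+$, so the entries of $\bar\varphi$ and $\bar\tau$ lie in $B_+$, which is precisely the condition that the differentials send each free $B$-module into $B_+$ times the next, i.e.\ that the resolution is minimal. For periodicity, observe that the sequence of modules underlying $\mathbf{\Omega}(\varphi,\tau)$ is obtained by iteratively applying the twist functor $(-)^\sigma$, and the maps between them are the same $\bar\varphi,\bar\tau$ up to $\sigma$-twisting. If $\sigma^n = \mathrm{id}$, then after $2n$ steps the twists return to the identity and the maps recur verbatim, so the resolution is periodic of period at most $2n$.

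The main obstacle I anticipate is bookkeeping the $\sigma$-twists consistently: one must ensure that each composition $\varphi\tau$ or $\tau\varphi$ used in the lift-and-cancel step is the one appearing in the defining equations of a twisted matrix factorization, rather than a version shifted by the wrong power of $\sigma$. Once the twist conventions from Definition \ref{defn:MF} are pinned down and their interaction with normality is made explicit, the regularity-based lifting argument and the minimality/periodicity conclusions should follow routinely.
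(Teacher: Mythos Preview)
Your exactness argument is essentially the paper's: both lift a cycle to $A$, use one factorization identity to divide by $f$, and then the other to cancel $f$ via regularity. The paper phrases this in terms of a free $A$-module $K$ surjecting onto the kernel (defining a module map $h$ rather than working element-by-element), but the mechanism is identical. Your periodicity observation also matches what the paper says (it declares this ``clear'').

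There is, however, a genuine gap in your minimality step. You write that ``by Definition~\ref{defn:redMF} the matrix entries of $\varphi$ and $\tau$ lie in $A_+$,'' but that is not what Definition~\ref{defn:redMF} says. The paper defines $(\varphi,\tau)$ to be \emph{reduced} when it is not isomorphic to a factorization with a trivial direct summand (one in which $\varphi$ or $\tau$ is $\lambda_f^A$). Passing from this to ``all matrix entries lie in $A_+$'' requires an argument: if some entry of, say, $\varphi$ were a unit, one performs the usual row/column reduction to split off a rank-one summand on which $\varphi$ is the identity, and then the factorization equation forces $\tau=\lambda_f^A$ on that summand, contradicting reducedness. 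The paper isolates this implication as a separate lemma (arguing via whether $\coker\varphi$ and $\coker\tau$ have free $B$-summands rather than via unit entries), so you should not treat it as definitional. Once you supply that step, your minimality conclusion goes through.
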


As in the commutative case, one can consider the category of all
twisted matrix factorizations of $f$ over a ring $A$, which we call
$TMF_A(f)$.

There is another context where twisting via an automorphism arises in
the study of graded algebras: the Zhang twist \cite{Zhang}.  The
following theorem relates the category of twisted matrix
factorizations of $f$ over $A$ to those over the Zhang twist of $A$
with respect to a compatible twisting system $\zeta$ (which we denote $A^\zeta$).

\begin{introthm}[see Theorem \ref{twistEquiv}] \label{thmB} Let $\zeta
  = \{\sigma^n~|~n \in \mathbb{Z}\}$ be the twisting system associated
  with the normalizing automorphism $\sigma$.  Then the categories
  $TMF_A(f)$ and $TMF_{A^\zeta}(f)$ are equivalent.
\end{introthm}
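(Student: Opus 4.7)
The overall plan is to lift Zhang's equivalence of graded module categories $(-)^\zeta : \GrMod(A) \to \GrMod(A^\zeta)$ to an equivalence of the corresponding twisted matrix factorization categories. Since the twisting system $\zeta = \{\sigma^n \mid n \in \Z\}$ is built from precisely the normalizing automorphism $\sigma$ of $f$ that appears in the definition of a twisted matrix factorization, I expect the two ``twists'' to cancel in a controlled manner.

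First, I would verify that $f$ remains a homogeneous regular normal element of $A^\zeta$ and compute its normalizing automorphism there. This is a direct computation using the Zhang product (roughly $a \cdot_\zeta b = a\,\sigma^{|a|}(b)$) together with the identity $fa = \sigma(a)f$ in $A$. With this in hand, $TMF_{A^\zeta}(f)$ is well-defined and its defining equations can be compared term-by-term with those of $TMF_A(f)$.

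Next I would construct a functor $\Phi : TMF_A(f) \to TMF_{A^\zeta}(f)$ as follows. On objects, take a twisted matrix factorization $(\varphi,\tau)$ with $\varphi : F_1 \to F_0$ and $\tau$ its twisted partner, and apply the Zhang twist $(-)^\zeta$ to the underlying free $A$-modules $F_0, F_1$ and to the maps $\varphi$ and $\tau$ regarded as maps of graded vector spaces. Three checks are needed: (i) the twisted modules $F_i^\zeta$ are free $A^\zeta$-modules of the appropriate rank and grading shift, which follows from the module-level equivalence applied to shifted copies of $A$; (ii) the twisted maps $\varphi^\zeta$ and $\tau^\zeta$ are $A^\zeta$-linear (this is formal from the definition of the twist functor); and (iii) the defining identities of a twisted matrix factorization, namely $\tau\varphi = f\cdot \mathrm{id}$ and the $\sigma$-twisted companion equation, remain valid in $A^\zeta$ after applying $(-)^\zeta$, because multiplication by $f$ interacts with the $\zeta$-product in exactly the way prescribed by the new normalizing automorphism computed above. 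Morphisms $(h_0,h_1)$ of twisted matrix factorizations are transported by the same recipe.

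The inverse functor $\Psi : TMF_{A^\zeta}(f) \to TMF_A(f)$ is constructed symmetrically using the inverse twisting system $\zeta^{-1} = \{\sigma^{-n} \mid n \in \Z\}$. Because $(A^\zeta)^{\zeta^{-1}} \isom A$ canonically by \cite{Zhang}, the compositions $\Psi\Phi$ and $\Phi\Psi$ are naturally isomorphic to the identity functors, yielding the asserted equivalence of categories. The main obstacle is the bookkeeping in the verification of (iii): one must carefully match the grading shifts on the free modules, the normalizing automorphism of $f$ inside $A^\zeta$, and the $\sigma$-twisting in the matrix factorization equations. Provided the definition of $TMF_A(f)$ uses the same $\sigma$ that generates $\zeta$, this reduces to a careful but routine inspection of degrees and of how $\sigma$ acts under each product.
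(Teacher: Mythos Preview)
Your approach is essentially the paper's: apply Zhang's functor $(-)^\zeta$ to each component of a twisted matrix factorization, verify the result is a twisted matrix factorization over $A^\zeta$, and invert using the inverse twisting system. The paper in fact proves a more general statement (Theorem~\ref{twistEquiv}) valid for any twisting system satisfying $\zeta_n\sigma\zeta_d=\sigma\zeta_{n+d}$ and $\zeta_n(f)=c^nf$, and isolates the key compatibility as Lemma~\ref{naturality}.

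One point you flag as ``routine bookkeeping'' is handled with care in the paper and deserves emphasis. The Zhang functor does \emph{not} literally commute with degree shifts: $M(n)^\zeta$ and $M^\zeta(n)$ are isomorphic via $\zeta_{-n}$ but not equal (Remark~\ref{twistIsom}). Consequently, applying $(-)^\zeta$ to $\tau:G^{tw}\to F$ yields a map with source $(G^{tw})^\zeta$, whereas a twisted matrix factorization over $A^\zeta$ requires source $(G^\zeta)^{\hat{tw}}$, where $\hat\sigma$ is the new normalizing automorphism. Lemma~\ref{naturality}(2) identifies these via a scalar correction $\lambda_c$, and the paper's functor is $(\varphi,\tau)\mapsto(\varphi,\tau\lambda_c)$. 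In your special case $\zeta_n=\sigma^n$ one has $\sigma(f)=f$ (from $f\cdot f=f\sigma(f)$ and regularity), so $c=1$ and $\lambda_c$ is the identity; thus your naive transport really does work, but only because of this simplification. Also, a minor correction to your sketch: in the paper's conventions the Zhang product is $a\ast b=\zeta_{|b|}(a)\,b$, not $a\,\sigma^{|a|}(b)$.
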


This result is somewhat surprising. If $f$ is central in the Zhang
twist $A^\zeta$, then the complexes associated to matrix
factorizations in $TMF_{A^\zeta}(f)$ will be periodic of period at
most two, while those coming from matrix factorizations in $TMF_A(f)$
could have a longer period, depending on the order of $\sigma$.  It
should be noted that $f$ is not necessarily central in the
Zhang twist.  This peculiarity is illustrated in Example \ref{ex:ore}.

A major result in \cite{Eis} that drives many of the applications of
matrix factorizations is that, under appropriate hypotheses, every
minimal graded free resolution is eventually given by a reduced matrix
factorization.  Using J\o rgensen's version of the Auslander-Buchsbaum
formula for connected graded algebras\cite{Jorg}, we are able to
extend this result:

\begin{introthm}[Theorem \ref{mainThm}] \label{thmC} Let $A$ be a left
  noetherian Artin-Schelter regular algebra of dimension $d$.  Let $f
  \in A_+$ be a homogeneous normal regular element and let $B = A/(f)$.
  Then for every finitely generated graded left $B$-module $M$, the
  $(d+1)^\text{st}$ left syzygy of $M$ is the cokernel of some reduced
  twisted left matrix factorization of $f$ over $A$.
\end{introthm}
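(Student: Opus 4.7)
The strategy is to adapt Eisenbud's classical argument \cite{Eis} to the connected graded setting, using J\o rgensen's graded Auslander--Buchsbaum formula \cite{Jorg} in place of the commutative local version. Let $M$ be a finitely generated graded left $B$-module and set $N := \Omega^{d+1}_B(M)$. First I would verify that $\depth_B B = d-1$: this follows from J\o rgensen's formula applied to $B$, using that $\pd_A B = 1$ via the short exact sequence arising from multiplication by $f$ on $A$. Iterating the graded depth lemma on the short exact sequences $0 \to \Omega^{i+1}_B M \to F_i \to \Omega^i_B M \to 0$ coming from the minimal $B$-free resolution of $M$, I conclude that $\depth_B N = d-1$, so $N$ is a maximal Cohen--Macaulay $B$-module; moreover, because $N$ appears in a minimal resolution, it has no non-zero free $B$-summand.

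Next, regard $N$ as a graded left $A$-module via the surjection $A \twoheadrightarrow B$. Since $f$ annihilates $N$, every $A$-regular sequence on $N$ lifts a regular sequence on $N$ over $B$, yielding $\depth_A N = \depth_B N = d-1$. Applying J\o rgensen's Auslander--Buchsbaum formula over the AS-regular algebra $A$ gives $\pd_A N + \depth_A N = d$, so $\pd_A N = 1$ (the case $\pd_A N = 0$ is excluded because $f$ is regular on $A$ but annihilates $N$, so $N$ cannot be free over $A$). Hence there is a minimal graded $A$-free resolution
\[
0 \longrightarrow F_1 \xrightarrow{\varphi} F_0 \longrightarrow N \longrightarrow 0,
\]
and the non-existence of free $A$-summands of $N$ forces $\varphi$ to have all entries in $A_+$.

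Finally, one extracts the companion map $\tau$ of the matrix factorization from this short resolution: since $f \cdot F_0 \subseteq \ker(F_0 \to N) = \im \varphi$ and $\varphi$ is injective, the action of $f$ on $F_0$ factors uniquely through $\varphi$, producing a map $\tau$ into a suitably $\sigma$-twisted copy of $F_1$, where the twist is forced by the normality relation $fa = \sigma(a)f$. Composing on either side of $\varphi$ and invoking injectivity then yields the two identities needed for $(\varphi,\tau)$ to satisfy Definition \ref{defn:MF}, exhibiting $(\varphi,\tau)$ as a reduced twisted left matrix factorization of $f$ with cokernel $N$.

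The main obstacle is bookkeeping: each invocation of a depth, projective dimension, or Auslander--Buchsbaum identity must be carried out carefully in the category of graded modules, and $\tau$ must be displayed as a legitimate morphism in $TMF_A(f)$ with the $\sigma$-twist handled consistently throughout. A secondary but non-trivial point is the justification of $\depth_A N = \depth_B N$ for a $B$-module $N$ in this graded, non-commutative context; once this and the correct graded form of the depth lemma are in hand, the argument reduces to the commutative template.
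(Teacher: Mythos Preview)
Your outline closely matches the paper's argument: both routes rely on J\o rgensen's Auslander--Buchsbaum formula, the graded depth lemma applied to the syzygy short exact sequences, and the equality $\depth_A N=\depth_B N$ for $B$-modules $N$. On this last point, note that the paper establishes it via the Cartan--Eilenberg change-of-rings spectral sequence (Lemma~\ref{depthFacts}); your regular-sequence heuristic does not transfer directly to the noncommutative graded setting, where depth is defined through $\Ext$. The construction of $\tau$ you sketch is exactly Construction~\ref{resolution}.

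There is, however, a genuine gap in your assertion that $N=\Omega^{d+1}_B(M)$ has no free $B$-summand ``because $N$ appears in a minimal resolution.'' That principle is false in general: over $B=k[x,y]/(x^2)$ (so $d=2$), the first syzygy of $M=B/(y)\oplus B/(x)$ in its minimal resolution is $B(-1)\oplus (B/(x))(-1)$, which has a nonzero free summand. Without ruling out a free summand of $N$ you cannot conclude that the factorization you build is \emph{reduced}, which is part of the statement. The paper handles this point by a slightly different organization: it first locates some $i\le d$ with $\pd_A\Omega_i(M)=1$ (Proposition~\ref{syzygy}), explicitly splits $\Omega_i(M)=\Omega_i'(M)\oplus F$ with $F$ free and $\Omega_i'(M)$ free-summand-free, builds a reduced factorization $(\varphi,\tau)$ for $\Omega_i'(M)$ via Construction~\ref{resolution} and Proposition~\ref{minRes}, and then observes that the minimal resolution of $M$ agrees with $\mathbf{\Omega}(\varphi,\tau)$ from homological degree $i+1$ onward. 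Since $i+1\le d+1$, the $(d+1)$-st syzygy is one of the cokernels occurring in $\mathbf{\Omega}(\varphi,\tau)$ and hence, by reducedness of $(\varphi,\tau)$, automatically has no free summand. Your argument can be repaired along exactly these lines.
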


There is a suitable notion of homotopy in the category $TMF_A(f)$, and
we denote the associated homotopy category $hTMF_A(f)$.  Following
Orlov's lead \cite{Orlov1}, we provide a triangulated structure on $hTMF_A(f)$ and
prove the following Theorem.  

\begin{introthm}[Theorem \ref{catEquiv}] \label{thmD} Let $A$ be a
  left noetherian Artin-Schelter regular algebra, and let $f \in A_+$
  be a homogeneous normal regular element.  Then the homotopy category
  of twisted matrix factorizations of $f$ over $A$ is equivalent to
  the bounded singularity category of $B$.
\end{introthm}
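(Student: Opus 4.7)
The plan is to adapt Orlov's strategy from \cite{Orlov1} to the twisted, noncommutative setting. Define a functor
\[
F : hTMF_A(f) \longrightarrow D_{sg}(B), \qquad (\varphi,\tau) \longmapsto \coker\varphi,
\]
where $D_{sg}(B) := D^b(B)/\mathrm{Perf}(B)$ is the bounded singularity category of finitely generated graded left $B$-modules. By Theorem A, $\mathbf{\Omega}(\varphi,\tau)$ is a graded free resolution of $\coker\varphi$ over $B$. A morphism of twisted matrix factorizations induces a chain map between these resolutions, and homotopic TMF morphisms produce homotopic chain maps, so $F$ descends to a well-defined functor on homotopy categories and then to $D_{sg}(B)$. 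The triangulated structure on $hTMF_A(f)$ is built from an involution on objects (swapping the roles of $\varphi$ and $\tau$, with appropriate degree shifts) together with mapping cones of TMF morphisms; a standard cone comparison then shows $F$ is exact.

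For essential surjectivity, let $M$ be any finitely generated graded left $B$-module. Theorem C produces a reduced TMF $(\varphi,\tau)$ with $\coker\varphi \cong \Omega^{d+1} M$. The truncation of a free resolution of $M$ in homological degrees $0,\dots,d$ is a perfect complex of $B$-modules, so in $D_{sg}(B)$ we have $M \cong \Omega^{d+1} M[d+1] \cong F(\varphi,\tau)[d+1]$. Since $F$ commutes with the shift, every object of $D_{sg}(B)$ lies in the essential image.

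For fully faithfulness, I would follow Orlov's Verdier-calculus argument. Any morphism between cokernels of reduced TMFs in $D_{sg}(B)$ can be represented, after passing to a sufficiently high syzygy, by an honest $B$-linear map modulo maps factoring through a perfect complex; by Theorem A (periodicity) and Theorem C, that high syzygy is itself a TMF cokernel, so the module map lifts to a morphism in $TMF_A(f)$, establishing fullness. For faithfulness, a TMF morphism whose image in $D_{sg}(B)$ vanishes induces a chain map between minimal free resolutions that is null-homotopic in sufficiently high degree; minimality (via reducedness) combined with the graded Nakayama lemma then propagates this null-homotopy back to the original TMF morphism itself. The main obstacle will be controlling the lifting step in the fullness argument: because the differentials of $\mathbf{\Omega}(\varphi,\tau)$ alternate between $\varphi$ and $\tau$ twisted by powers of $\sigma$, any lift of a module map must be chosen compatibly with these twisted actions at every level. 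Verifying this compatibility is where J\o rgensen's graded Auslander-Buchsbaum formula together with the normality of $f$ will be essential, since they force the high syzygies in question to be maximal Cohen-Macaulay over $A$ and therefore to be the cokernels of TMFs to which Theorem C applies uniformly.
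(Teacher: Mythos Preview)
Your overall strategy matches the paper's in spirit---both follow Orlov---but the paper takes a cleaner route by factoring through the stable category $\underline{MCM}(B)$ and invoking Buchweitz's equivalence $\underline{MCM}(B)\simeq D^b_{sg}(B)$. This lets the paper avoid Verdier calculus entirely: fullness becomes the elementary observation that any $B$-module map between MCM modules lifts (by the comparison theorem for free $A$-resolutions) to a morphism of twisted matrix factorizations, and faithfulness is obtained not by chasing homotopies but by a general triangulated-category fact (Orlov's Theorem 3.9): an exact, full functor that reflects zero objects is automatically faithful. Reflecting zero objects is handled by a direct argument (Lemma~\ref{trivial}): if $\coker\varphi$ is perfect then some $(\coker\varphi)^{\overline{tw}^n}$ is free, hence $\coker\varphi$ is free, hence $(\varphi,\tau)\cong 0$ in $hTMF_A(f)$.

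The weakest point in your outline is the faithfulness step. You claim that a null-homotopy of chain maps ``in sufficiently high degree'' can be propagated back to a homotopy of the original TMF morphism via minimality and Nakayama. This is not a standard fact and is not obviously true here: the differentials are twisted by varying powers of $\sigma$, so a homotopy at level $n$ lives between modules that differ from those at level $0$ by $(-)^{tw^n}$, and there is no evident way to untwist. Moreover, a morphism vanishing in $D^b_{sg}(B)$ only says the induced map on cokernels factors through a \emph{perfect complex}, not through a projective module; bridging that gap is precisely what Buchweitz's equivalence does for you. Two smaller issues: the shift on $hTMF_A(f)$ is not an involution (it involves $(-)^{tw^{-1}}$, and the paper notes explicitly that $|\sigma|$ may be infinite), and you should not invoke periodicity from Theorem~A, since the resolutions need not be periodic when $\sigma$ has infinite order---the paper's argument is designed to avoid any appeal to periodicity.
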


It should be noted that since the minimal resolution that comes from a
twisted matrix factorization need not be periodic, some minor adjustments to
Orlov's original argument must be made.

The paper is organized as follows: Section \ref{section:Preliminaries}
covers preliminaries, as well as sets up notation regarding various
twists that will be used for the remainder of the paper.  Section
\ref{section:MF} covers the definition of matrix factorization, as
well as the proof of Theorem \ref{thmA}.  Section \ref{section:zhang}
contains the background regarding the Zhang functor, as well as the
proof of Theorem \ref{thmB}.  Section \ref{section:hypersurfaces}
includes the background on the Auslander-Buchsbaum theorem in our
setting due to J\o rgensen, as well as the precise statement and proof
of Theorem \ref{thmC}.  Section \ref{section:categories} contains the
categorical considerations for Theorem \ref{thmD}, and Section
\ref{section:examples} contains some examples.

\section*{Acknowledgements}
Many computations were performed using the \texttt{NCAlgebra} package
written by Conner and Moore for Macaulay2 \cite{M2}; it provides a
Macaulay2 interface to the Bergman \cite{Bergman} system and was
invaluable in performing some of the calculations.

\section{Preliminaries}
\label{section:Preliminaries}
The main results in this paper concern graded modules over graded
rings, hence we work exclusively in that context.  Let $A$ be a
connected, $\N$-graded algebra over a field $k$. We assume $A$ is
locally finite-dimensional: $\dim_k A_i<\infty$ for all
$i\in\N$. Throughout, we work in the category $A$-GrMod of graded left
$A$-modules with degree 0 morphisms, though our definitions and
results have obvious analogs for graded right modules. Let $\sigma$ be
a degree 0 graded algebra automorphism of $A$. For $M\in
A\text{-GrMod}$, we write $M^{\sigma}$ for the graded left $A$-module
with $M^{\sigma}=M$ as graded abelian groups and left $A$-action
$a\cdot m= \sigma(a)m$. If $\varphi:M\rightarrow N$ is a degree 0
homomorphism of graded left $A$-modules,
$\varphi^{\sigma}=\varphi:M^{\sigma}\rightarrow N^{\sigma}$ is also a
graded module homomorphism. It is straightforward to check that the
functor $(-)^{\sigma}$ is an autoequivalence of $A$-GrMod.

For any $n\in\Z$ and $M\in A$-GrMod, we write $M(n)$ for the shifted
module whose degree $i$ component is $M(n)_i=M_{i+n}$. The degree
shift functor $M\mapsto M(n)$ is also easily seen to be an
autoequivalence of $A$-GrMod which commutes with $(-)^{\sigma}$; that
is, $M(n)^{\sigma}=M^{\sigma}(n)$.

Let $f\in A_d$ be a normal, regular homogeneous element of degree $d$,
and let $\sigma:A\rightarrow A$ be the graded automorphism of $A$
determined by the equation $af=f\sigma(a)$ for each $a\in A$. We call
$\sigma$ the \emph{normalizing automorphism} of $f$ and say $f$
\emph{is normalized by} $\sigma$. Note that $f$ is normalized by
$\sigma$ if and only if left multiplication by $f$ is a graded left
module homomorphism $\lambda_f^M:M^{\sigma}(-d)\rightarrow M$ for all
$M$. Moreover, $\lambda_f^N\varphi^{\sigma}(-d)=\varphi\lambda_f^M$
for any graded homomorphism $\varphi:M\rightarrow N$. The composite
functor $M\mapsto M^{\sigma}(-d)$ will be used so frequently for fixed
$\sigma$ and $d$ that we define $M^{tw}=M^{\sigma}(-d)$ and
$M^{tw^{-1}}=M^{\sigma^{-1}}(d)$.

In this paper we are especially interested in periodic resolutions. We
say a degree 0 complex $\mathbf{P}:\cdots\rightarrow P_2\rightarrow
P_1\rightarrow P_0$ of graded left $A$-modules is \emph{periodic of period $p$}
if $p$ is the smallest positive integer such that there exists an integer
$n$ and a morphism of complexes $t:\mathbf{P}(n)\rightarrow \mathbf{P}$
of (homological) degree $-p$ where $t:P_{i+p}(n)\rightarrow P_i$ is an isomorphism
for all $i\ge 0$. Note the shift $(n)$ is applied to the internal grading of each
free module in the complex. If such an integer $p$ exists, we say
$\mathbf{P}$ is \emph{periodic}.  If there exists an integer $m\ge 0$
such that the truncated complex $\cdots\rightarrow P_{m+2}\rightarrow
P_{m+1}\rightarrow P_{m}$ is periodic, we say $\mathbf{P}$ is
\emph{periodic after $m$ steps}.

We record a few straightforward facts about periodic complexes needed later. 

\begin{lemma}
\label{resFacts}
Let $\mathbf{P}$ be a complex of graded free left $A$-modules. 
\begin{enumerate}
\item If $\mathbf{P}$ is periodic and there exists an integer $N>0$
  such that $\rank\ P_j=\rank\ P_N$ for all $j\ge N$, then $\rank\
  P_j=\rank\ P_0$ for all $j\ge 0$.
\item Let $\widetilde{\mathbf{P}}$ be a complex of graded free left
  $A$-modules which is periodic of period $p$. If there exist an
  integer $n$ and an isomorphism of complexes
  $t:\widetilde{\mathbf{P}}(n)\rightarrow \mathbf{P}$, then
  $\mathbf{P}$ is also periodic of period $p$.
\end{enumerate}
\end{lemma}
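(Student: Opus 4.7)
The key observation is that any graded isomorphism $P_{i+p}(n) \to P_i$ between free graded modules preserves rank, since shifting is an autoequivalence. For (1), periodicity of $\mathbf{P}$ yields $\rank P_{i+p} = \rank P_i$ for all $i \ge 0$. Given any $j \ge 0$, I would choose $k \in \N$ large enough that $j + kp \ge N$ and iterate the rank identity to conclude $\rank P_j = \rank P_{j+kp} = \rank P_N$. Taking $j = 0$ gives $\rank P_0 = \rank P_N$, so every $\rank P_j$ equals $\rank P_0$.

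For (2), fix an integer $\widetilde{n}$ and a morphism $\widetilde{t}: \widetilde{\mathbf{P}}(\widetilde{n}) \to \widetilde{\mathbf{P}}$ witnessing the periodicity of $\widetilde{\mathbf{P}}$ of period $p$. I would define $s: \mathbf{P}(\widetilde{n}) \to \mathbf{P}$ as the composition
$$\mathbf{P}(\widetilde{n}) \xrightarrow{t(\widetilde{n})^{-1}} \widetilde{\mathbf{P}}(n + \widetilde{n}) \xrightarrow{\widetilde{t}(n)} \widetilde{\mathbf{P}}(n) \xrightarrow{t} \mathbf{P},$$
which is a morphism of complexes of homological degree $-p$ whose components are isomorphisms, since each constituent is an isomorphism of complexes and degree shifting is exact. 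Thus $\mathbf{P}$ is periodic of some period $p' \le p$.

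For the reverse inequality, I would observe that shifting $t$ by $-n$ gives an isomorphism of complexes $\widetilde{\mathbf{P}} \cong \mathbf{P}(-n)$, placing us in the symmetric situation with roles of $\mathbf{P}$ and $\widetilde{\mathbf{P}}$ swapped. The same conjugation construction then transports any period-$p'$ datum for $\mathbf{P}$ back to one for $\widetilde{\mathbf{P}}$, and the minimality of $p$ for $\widetilde{\mathbf{P}}$ forces $p' \ge p$. The only real bookkeeping is tracking internal-grading shifts through the construction of $s$; commutativity of the relevant squares is automatic since $t$ and $\widetilde{t}$ are themselves morphisms of complexes, so no genuine obstacle is expected.
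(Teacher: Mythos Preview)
The paper does not actually prove this lemma; it merely states it as ``a few straightforward facts about periodic complexes needed later'' and moves on. Your argument fills that gap correctly: for (1) you use that periodicity forces $\rank P_{i+p}=\rank P_i$ and then shift any index into the stable range $j\ge N$; for (2) you transport the period-$p$ datum $\widetilde t$ along the isomorphism $t$ by conjugation, and then invoke symmetry to rule out a strictly smaller period for $\mathbf P$. Both steps are sound, and the bookkeeping with internal-grading shifts in the composite $t\circ \widetilde t(n)\circ t(\widetilde n)^{-1}$ is handled properly. There is nothing to compare against in the paper, and your proof is exactly the routine verification the authors had in mind.
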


We say a resolution $(\mathbf{P}_{\bullet},d_{\bullet})$ is minimal if
$\im\ d_i\subset A_+P_{i-1}$ for all $i$, where $A_+=\bigoplus_{n>0}
A_n$.  Recall that every bounded below, graded module over a
connected, $\N$-graded, locally finite-dimensional $k$-algebra has a
minimal graded free resolution. This resolution is unique up to
non-unique isomorphism of complexes (see, for example, \cite{PP}).

\begin{lemma}
\label{stability}
Let $\sigma$ be a degree 0 graded automorphism of $A$. Let
$\mathbf{P}$ be a minimal graded free resolution of a bounded below,
graded left $A$-module $M$. If $M^{\sigma}\cong M$ as graded modules,
then the chain complexes $\mathbf{P}^{\sigma}$ and $\mathbf{P}$ are
isomorphic.
\end{lemma}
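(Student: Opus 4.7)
The plan is to reduce the claim to the uniqueness of minimal graded free resolutions by showing that $\mathbf{P}^{\sigma}$ is itself a minimal graded free resolution of $M^{\sigma}$, and then to invoke the hypothesis $M^{\sigma}\cong M$.

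The first step is to verify that the functor $(-)^{\sigma}$ preserves graded free modules with the same ranks and shifts. The key observation is that the map $A\to A^{\sigma}$ defined by $a\mapsto \sigma(a)$ is an isomorphism of graded left $A$-modules: it is clearly a degree 0 bijection, and it is $A$-linear because $\sigma(ba)=\sigma(b)\sigma(a)=b\cdot_{\sigma}\sigma(a)$. This extends to show $A(n)^{\sigma}\cong A(n)$ (since $(-)^{\sigma}$ commutes with internal shifts), and hence $P_i^{\sigma}$ is graded free with precisely the same generators in the same degrees as $P_i$.

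Next, I would check that $\mathbf{P}^{\sigma}$ remains a minimal free resolution, now of $M^{\sigma}$. Exactness is immediate since $(-)^{\sigma}$ leaves underlying abelian groups and maps unchanged, so kernels and images of the $d_i^{\sigma}$ agree with those of $d_i$ as subsets. The cokernel of $d_1^{\sigma}$ is the same abelian group as $\coker d_1 = M$, but with the twisted $A$-action, i.e., $M^{\sigma}$. For minimality, one observes that $\im d_i^{\sigma} = \im d_i \subseteq A_+ P_{i-1}$, and the twisted submodule $A_+\cdot_{\sigma} P_{i-1}=\sigma(A_+)P_{i-1}=A_+ P_{i-1}$ because the graded automorphism $\sigma$ preserves $A_+$. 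Thus $\im d_i^{\sigma}\subseteq A_+\cdot_{\sigma}P_{i-1}^{\sigma}$, as required.

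The final step is to apply the uniqueness (up to isomorphism) of minimal graded free resolutions over our connected, locally finite-dimensional $A$, as recalled immediately before the lemma. Since $\mathbf{P}^{\sigma}$ and $\mathbf{P}$ are both minimal graded free resolutions of isomorphic graded left $A$-modules ($M^{\sigma}\cong M$ by hypothesis), they must be isomorphic as chain complexes. I do not expect any serious obstacle here; the only point that requires a little care is the verification that $(-)^{\sigma}$ actually maps $A(n)$ to a free module isomorphic to $A(n)$ rather than to some twisted free module, and that the submodule $A_+P_{i-1}$ is stable under the twisted action, both of which follow from $\sigma$ being a graded automorphism.
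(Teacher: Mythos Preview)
Your proposal is correct and follows essentially the same approach as the paper: both first observe that $\mathbf{P}^{\sigma}$ is a minimal graded free resolution of $M^{\sigma}$, then conclude by uniqueness of minimal resolutions. The only cosmetic difference is that you cite the uniqueness statement recalled just before the lemma, whereas the paper spells out that standard argument (lift $\psi$ and $\psi^{-1}$ to chain maps via the Comparison Theorem, then use minimality to upgrade the resulting homotopy equivalences to isomorphisms); conversely, you give more detail on why $\mathbf{P}^{\sigma}$ is minimal free, which the paper simply asserts.
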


\begin{proof}
  First note that $\mathbf{P}^{\sigma}$ is a minimal graded free
  resolution of $M^{\sigma}$.  Let $\psi:M^{\sigma}\rightarrow M$ be a
  graded isomorphism. Let $\Psi:\mathbf{P}^{\sigma}\rightarrow
  \mathbf{P}$ and $\Phi:\mathbf{P}\rightarrow \mathbf{P}^{\sigma}$ be
  the graded morphisms of complexes obtained by lifting $\psi$ and
  $\psi^{-1}$ respectively. By the Comparison Theorem, $\Psi\Phi$ and
  $\Phi\Psi$ are homotopic to the respective identity maps. Since the
  complexes $\mathbf{P}$ and $\mathbf{P}^{\sigma}$ are minimal,
  $\Psi\Phi$ and $\Phi\Psi$ are isomorphisms. Hence $\Psi$ and $\Phi$
  are isomorphisms.
\end{proof}

\section{Twisted matrix factorizations}
\label{section:MF}
The key to our study is the notion of a twisted matrix
factorization. As will be evident, the notion can in fact be defined
over any ring containing a normal, regular element. Indeed some
results, such as Proposition \ref{exactness}, are readily seen to hold
in this generality by forgetting the grading.

As above, let $A$ be a connected, $\N$-graded, locally
finite-dimensional algebra over a field $k$. Let $f\in A_d$ be a
normal, regular homogeneous element of degree $d$ and let $\sigma$ be
its degree 0 normalizing automorphism. In this section we do not
require $A$ to be Artin-Schelter regular.

\begin{defn}
\label{defn:MF}
A \emph{twisted left matrix factorization} of $f$ over $A$ is an
ordered pair of maps of finitely generated graded free left
$A$-modules $(\varphi:F\rightarrow G, \tau:G^{tw}\rightarrow F)$ such
that $\varphi\tau=\lambda_f^G$ and $\tau\varphi^{tw}=\lambda_f^F$.
\end{defn}

Our definition is a naive generalization of the familiar notion from
commutative algebra. We remark that the freeness of $G^{tw}$ requires
$\sigma$ to be a graded automorphism. Also note that $(\varphi,\tau)$
is a twisted matrix factorization if and only if either
$(\varphi^{tw},\tau^{tw})$ or $(\tau,\varphi^{tw})$ is.  It is easy to see
that if $(\varphi,\tau)$ is a twisted matrix factorization, then both
$\varphi$ and $\tau$ are injective since $f$ is regular.

Free modules over noncommutative rings need not have a well-defined notion of rank. Even among those that do, not all satisfy the \emph{rank conditions} (a) $f:A^n\rightarrow A^m$ an epimorphism $\Rightarrow n\ge m$ and (b)  $f:A^n\rightarrow A^m$ a monomorphism $\Rightarrow n\le m$ (though (b) implies (a), see \cite{Lam}). However, since we assume $A$ is locally finite dimensional and morphisms preserve degree, the graded version of (b) clearly holds for graded free $A$-modules. Thus rank is well-defined for graded free $A$-modules. As noted above, if $(\varphi:F\rightarrow G, \tau:G^{tw}\rightarrow F)$ is a twisted
  left matrix factorization, then $\varphi$ and $\tau$ are injective. It follows that $\rank\ F=\rank\ G$.

For categorical reasons (see below) we adopt the usual convention that the
zero module is free on the empty set. We call the twisted
factorization $(\varphi,\tau)$ where $\varphi=\tau:0\rightarrow 0$ the
\emph{irrelevant factorization}.  We call a twisted factorization
$(\varphi,\tau)$ \emph{trivial} if $\varphi=\lambda_f^A$ or
$\tau=\lambda_f^A$.

Paralleling the commutative case, twisted matrix factorizations
provide a general construction of resolutions.

\begin{prop}
\label{exactness}
Let $(\varphi:F\rightarrow G, \tau:G^{tw}\rightarrow F)$ be a twisted
left matrix factorization of a regular element $f\in A$ with
normalizing automorphism $\sigma$. Set $B=A/(f)$ and write
$\overline{{\ }^{} }$ for reduction modulo $f$. Then the complex
$$\mathbf{\Omega}(\varphi,\tau) : \cdots\rightarrow \overline{G}^{\overline{tw}^2}\xrightarrow{\overline{{\tau}}^{\overline{tw}}}
                                                    \overline{F}^{\overline{tw}}\xrightarrow{\overline{{\varphi}}^{\overline{tw}}}
                                                    \overline{G}^{\overline{tw}}\xrightarrow{\overline{\tau}}
                                                    \overline{F}\xrightarrow{\overline{\varphi}} \overline{G}$$
is a resolution of $M={\rm coker\ \varphi}$ by free left $B$-modules.
\end{prop}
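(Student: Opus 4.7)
The plan is to verify the three properties needed: the diagram is a complex, each module in it is graded free over $B$, and the complex is exact in positive homological degree with $H_0 = M$.

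\medskip

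\noindent\textbf{Complex and freeness.} First I would note that $\overline{F} = F/fF = B\otimes_A F$ is a graded free $B$-module (and similarly for $\overline{G}$), and since $(-)^{tw}$ commutes with reduction modulo $f$ and preserves freeness (it acts by the automorphism $\overline{\sigma}$ induced on $B$ together with a degree shift), every module appearing in $\mathbf{\Omega}(\varphi,\tau)$ is graded free over $B$. That it is a complex is immediate from reducing the defining relations $\varphi\tau=\lambda_f^G$ and $\tau\varphi^{tw}=\lambda_f^F$ modulo $f$ to obtain $\overline{\varphi}\,\overline{\tau}=0$ and $\overline{\tau}\,\overline{\varphi}^{tw}=0$, and likewise after applying the autoequivalence $(-)^{tw}$ any number of times.

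\medskip

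\noindent\textbf{Identification of $H_0$.} The relation $\varphi\tau=\lambda_f^G$ shows that $f\cdot G\subseteq \im\varphi$, so $f$ annihilates $\coker\varphi$ and $M=\coker\varphi$ is naturally a graded left $B$-module. Under this $B$-structure $\coker\overline{\varphi}=M$, giving $H_0(\mathbf{\Omega}(\varphi,\tau))=M$.

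\medskip

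\noindent\textbf{Exactness.} The heart of the argument is exactness at $\overline{F}$ and at $\overline{G}^{tw}$; by applying the autoequivalence $(-)^{tw}$ repeatedly, which carries the complex to itself shifted homologically by $2$, exactness at every higher term then follows. For exactness at $\overline{F}$, suppose $x\in F$ satisfies $\varphi(x)\in fG$; write $\varphi(x)=\lambda_f^G(y)$ for some $y\in G^{tw}$. Then $\varphi\tau(y)=\lambda_f^G(y)=\varphi(x)$, and injectivity of $\varphi$ (which follows from regularity of $f$, as noted before the proposition) gives $x=\tau(y)$, so $\bar x=\overline{\tau}(\bar y)$. Exactness at $\overline{G}^{tw}$ is the mirror argument using $\tau\varphi^{tw}=\lambda_f^F$ and injectivity of $\tau$.

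\medskip

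\noindent\textbf{Main obstacle.} There is no serious technical obstacle; the only place to be careful is in the bookkeeping of the twist. The source of $\lambda_f^G$ is $G^{tw}$ (not $G$), so when one writes $\varphi(x)=fy$ one must interpret $y$ as an element of $G^{tw}$ in order for $\varphi\tau(y)$ to make sense, and similarly track that $(-)^{tw}$ carries the pair $(\varphi,\tau)$ to a new twisted matrix factorization $(\varphi^{tw},\tau^{tw})$ so that the same exactness argument applies at the shifted spots.
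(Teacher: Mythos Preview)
Your proposal is correct and follows essentially the same approach as the paper: lift an element of the kernel to $A$, use the factorization identity $\varphi\tau=\lambda_f^G$ (resp.\ $\tau\varphi^{tw}=\lambda_f^F$) together with injectivity of $\varphi$ (resp.\ $\tau$) to exhibit it as a boundary. The only cosmetic difference is that the paper phrases the lifting via an auxiliary free $A$-module $K$ surjecting onto the kernel rather than working elementwise, but the underlying argument is identical.
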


Before giving the proof, we must address a potential source of
confusion stemming from the fact that $M$ is both a left $A$-module
and a left $B$-module. Since $\sigma(f)=f$, $\sigma$ induces an
automorphism $\bar{\sigma}:B\rightarrow B$. We will write $M^{tw}$ to
mean $(_AM)^{\sigma}(-d)$ and $M^{\overline{tw}}$ to mean
$(_BM)^{\bar{\sigma}}(-d)$. The reader should note that
$M^{\overline{tw}}\isom B\tensor_A M^{tw}$ and likewise
$_A(M^{\overline{tw}})\isom M^{tw}$. We denote repeated application of
of $(-)^{tw}$ by $(-)^{tw^2}$, $(-)^{tw^3}$, etc.\ and likewise for
$(-)^{\overline{tw}}$.

The proof of Proposition \ref{exactness} is a straightforward
generalization of the commutative case.

\begin{proof}
  Since $\varphi\tau=\lambda_f^G$, we see that $f({\rm coker\ }
  \varphi)=0$ so ${\rm coker\ } \overline{\varphi} = {\rm coker\
  }\varphi$.

  We prove exactness at $\overline{F}^{\overline{tw}^i}$, exactness at
  $\overline{G}^{\overline{tw}^i}$ being analogous. Let $K$ be a
  graded free $A$-module and $\kappa:K\rightarrow F^{tw^i}$ an
  $A$-module map such that $\overline{\kappa}$ is a $B$-module
  surjection onto $\ker\ \overline{\varphi}^{\overline{tw}^i}$. Then
  $\im\ \varphi^{tw^i}\kappa\subseteq f G^{tw^i}$ and we can define an
  $A$-module map $h:K\rightarrow
  ({G}^{tw^i})^{{}^{tw}}={G}^{tw^{i+1}}$ by $h({x})={g}$ where $g\in
  G^{tw^i}$ satisfies $\varphi^{tw^i}\kappa(x)=fg$. For any $a\in A$
  we have $\varphi^{tw^i}\kappa(ax)=afg=f\sigma(a)g$ so $h$ is
  $A$-linear.  Since $f$ is regular, $\overline{h}$ does not depend on
  the choice of $g$. We have
$$\lambda^{F^{tw^i}}_f\kappa = \tau^{tw^{i-1}}\varphi^{tw^i}\kappa=\tau^{tw^{i-1}}\lambda^{G^{tw^i}}_f h=\lambda^{F^{tw^i}}_f\tau^{tw^i} h$$
Again, since $f$ is regular,
$\overline{\kappa}=\overline{\tau}^{\overline{tw}^i}\overline{h}$,
hence $\ker\ \overline{\varphi}^{\overline{tw}^i}=\im\
\overline{\kappa}\subseteq \im\ \overline{\tau}^{\overline{tw}^i}$.
\end{proof}

In light of Proposition \ref{exactness}, we make the following natural definition.  

\begin{defn}
  A morphism $(\varphi,\tau)\rightarrow (\varphi', \tau')$ of twisted
  left matrix factorizations of $f$ over $A$ is a pair
  $\Psi=(\Psi_G,\Psi_F)$ of degree 0 module homomorphisms
  $\Psi_G:G\rightarrow G'$ and $\Psi_F:F\rightarrow F'$ such that the
  following diagram commutes.
$$
\xymatrix{
F\ar@{->}[r]^{\varphi}\ar@{->}[d]_{\Psi_F} &G\ar@{->}[d]^{\Psi_G}\\
F'\ar@{->}[r]_{\varphi'} &G'\\
}
$$
A morphism $\Psi$ is an \emph{isomorphism} if $\Psi_G$ and $\Psi_F$ are isomorphisms. 
\end{defn}

The regularity of $f$ guarantees that
$(\Psi_G,\Psi_F):(\varphi,\tau)\rightarrow (\varphi', \tau')$ is a
morphism if and only
if $$(\Psi_F,\Psi_G^{tw}):(\tau,\varphi^{tw})\rightarrow (
\tau',(\varphi')^{tw})$$ is. It is clear that $(\Psi_G,\Psi_F)$ is a
morphism if and only
if $$(\Psi_G^{tw},\Psi_F^{tw}):(\varphi^{tw},\tau^{tw})\rightarrow
((\varphi')^{tw}, (\tau')^{tw})$$ is.  We leave the straightforward
proof of the next Proposition to the reader.

\begin{prop}
\label{morphisms}
A morphism $\Psi:(\varphi,\tau)\rightarrow(\varphi',\tau')$ of twisted
matrix factorizations of $f$ over $A$ induces a chain morphism of
complexes $\mathbf{\Omega}(\Psi):\mathbf{\Omega}(\varphi,\tau)\rightarrow
\mathbf{\Omega}(\varphi',\tau')$.  Twisted matrix factorizations
$(\varphi,\tau)$ and $(\varphi', \tau')$ are isomorphic if and only if
the complexes $\mathbf{\Omega}(\varphi,\tau)$ and
$\mathbf{\Omega}(\varphi',\tau')$ are chain isomorphic.
\end{prop}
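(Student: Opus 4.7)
The plan is to treat the two assertions of the proposition separately. For the first assertion, given a morphism $\Psi = (\Psi_G, \Psi_F)$, I define $\mathbf{\Omega}(\Psi)$ componentwise as the sequence of maps $\overline{\Psi_G}, \overline{\Psi_F}, \overline{\Psi_G}^{\overline{tw}}, \overline{\Psi_F}^{\overline{tw}}, \ldots$. The squares involving $\overline{\varphi}$ and its twists commute by reducing the defining equation $\Psi_G \varphi = \varphi' \Psi_F$ modulo $f$. The squares involving $\overline{\tau}$ and its twists reduce to checking $\Psi_F \tau = \tau' \Psi_G^{tw}$, which I verify over $A$ using
\[
\varphi' \Psi_F \tau = \Psi_G \varphi \tau = \Psi_G \lambda_f^G = \lambda_f^{G'} \Psi_G^{tw} = \varphi' \tau' \Psi_G^{tw},
\]
together with the injectivity of $\varphi'$ (from regularity of $f$). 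Higher squares follow by iterating, using the observation in the text that $(\Psi_G^{tw}, \Psi_F^{tw})$ is a morphism $(\varphi^{tw}, \tau^{tw}) \to ((\varphi')^{tw}, (\tau')^{tw})$. The ``only if'' direction of the biconditional is then immediate: apply this construction to the mutually inverse pair $\Psi, \Psi^{-1}$, obtaining mutually inverse chain maps.

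For the ``if'' direction, suppose $T: \mathbf{\Omega}(\varphi, \tau) \to \mathbf{\Omega}(\varphi', \tau')$ is a chain isomorphism with initial components $T_0: \overline{G} \to \overline{G'}$ and $T_1: \overline{F} \to \overline{F'}$. Using the freeness of $G$, lift $T_0$ to an $A$-linear map $\Psi_G: G \to G'$; similarly lift $T_0^{-1}$ to $\Xi: G' \to G$. Then $\Xi \Psi_G - \mathrm{id}_G$ has image in $fG$ and hence raises degree by at least $\deg f \ge 1$; the standard geometric series argument in the bounded-below, locally finite-dimensional graded setting shows $\Xi \Psi_G$ is an isomorphism, and symmetrically so is $\Psi_G \Xi$, so $\Psi_G$ is an isomorphism. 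I then define $\Psi_F: F \to F'$ as the unique $A$-linear map satisfying $\varphi' \Psi_F = \Psi_G \varphi$. This is well-defined because $\Psi_G \varphi(F) \subseteq \varphi'(F')$: modulo $f$ the image lies in $\im \overline{\varphi'}$ (since $\overline{\Psi_G \varphi} = T_0 \overline{\varphi} = \overline{\varphi'} T_1$), and $\coker \varphi' = \coker \overline{\varphi'}$ because $f$ annihilates the cokernel of a matrix factorization.

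Showing $\Psi_F$ is itself an isomorphism is the main obstacle. Injectivity is immediate from injectivity of $\Psi_G$, $\varphi$, and $\varphi'$. For surjectivity, note that one cannot argue directly at the level of $T_1$, since $\overline{\Psi_F}$ need only agree with $T_1$ modulo $\ker \overline{\varphi'} = \im \overline{\tau'}$. Instead I pass to $H_0$: since $T$ is a chain isomorphism, the induced map $\coker \varphi \to \coker \varphi'$ is an isomorphism, and this is precisely the map induced by $\Psi_G$ (because $\Psi_G$ reduces to $T_0$ mod $f$ and $\coker \varphi = \coker \overline{\varphi}$). Invertibility of $\Psi_G$ as an $A$-module map then forces $\Psi_G(\varphi(F)) = \varphi'(F')$, so $\varphi' \Psi_F = \Psi_G \varphi$ surjects onto $\varphi'(F')$, and injectivity of $\varphi'$ gives surjectivity of $\Psi_F$. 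The pair $(\Psi_G, \Psi_F)$ is then the desired isomorphism of twisted matrix factorizations.
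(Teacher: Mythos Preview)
Your proof is correct. The paper itself omits the argument entirely, stating ``We leave the straightforward proof of the next Proposition to the reader,'' so there is no approach to compare against; you have supplied what the authors declined to write out.

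A couple of minor remarks. First, your phrase ``raises degree by at least $\deg f$'' for $\Xi\Psi_G - \mathrm{id}_G$ is slightly imprecise as stated (the map has degree zero), but the underlying point is right: since the image lies in $fG$, iterating lands in $f^kG$, which vanishes in any fixed graded piece for $k$ large because $G$ is bounded below; this is what makes the geometric series converge. An alternative one-liner here is graded Nakayama: $k\otimes_A \Psi_G = k\otimes_B T_0$ is an isomorphism of finite-dimensional graded vector spaces, hence $\Psi_G$ is an isomorphism of graded free modules. Second, your surjectivity argument for $\Psi_F$ via the five lemma on the short exact sequences $0\to\mathrm{im}\,\varphi\to G\to\coker\varphi\to 0$ is clean and is exactly the right way to handle the fact that $\overline{\Psi_F}$ need not equal $T_1$ on the nose. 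The paper's remark immediately preceding the proposition (that $(\Psi_G,\Psi_F)$ is a morphism iff $(\Psi_F,\Psi_G^{tw})$ is) already encodes your computation $\Psi_F\tau=\tau'\Psi_G^{tw}$, so for the first assertion you are simply unpacking what the authors had set up.
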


\begin{defn}
\label{defn:redMF}
We define the \emph{direct sum} of twisted matrix factorizations
$(\varphi,\tau)$ and $(\varphi', \tau')$ to be
$(\varphi\oplus\varphi', \tau\oplus\tau')$. We call $(\varphi,\tau)$
\emph{reduced} if it is not isomorphic to a factorization having a
trivial direct summand.
\end{defn}

We denote by $TMF_A(f)$ the category whose objects are all twisted
left matrix factorizations of $f$ over $A$ and whose morphisms are
defined as above. As previously mentioned, $TMF_A(f)$ has a zero
object and all finite direct sums. Since morphisms are pairs of module
maps, monomorphisms and epimorphisms are determined
componentwise. Thus we have the following obvious fact.

\begin{prop}
$TMF_A(f)$ is an abelian category.
\end{prop}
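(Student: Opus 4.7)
The plan is to verify the abelian category axioms. The paper has already noted that $TMF_A(f)$ has a zero object (the irrelevant factorization), admits all finite biproducts (direct sums), and that monomorphisms and epimorphisms are determined componentwise, so the remaining tasks are to construct kernels and cokernels of arbitrary morphisms and to check the coimage-to-image axiom.

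For a morphism $\Psi=(\Psi_G,\Psi_F):(\varphi,\tau)\to(\varphi',\tau')$ I would build the kernel componentwise. Set $K_F=\ker\Psi_F$ and $K_G=\ker\Psi_G$. The commutative square $\Psi_G\varphi=\varphi'\Psi_F$ forces $\varphi(K_F)\subseteq K_G$, and the dual morphism relation $\Psi_F\tau=\tau'\Psi_G^{tw}$, recorded in the paper as equivalent to $\Psi$ being a morphism, forces $\tau(K_G^{tw})\subseteq K_F$ (using that $(\ker\Psi_G)^{tw}=\ker(\Psi_G^{tw})$ because $(-)^{tw}$ is an autoequivalence of $A$-GrMod). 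Restriction yields maps $\tilde\varphi:K_F\to K_G$ and $\tilde\tau:K_G^{tw}\to K_F$ whose composites satisfy $\tilde\varphi\tilde\tau=\lambda_f^{K_G}$ and $\tilde\tau\tilde\varphi^{tw}=\lambda_f^{K_F}$ by restriction of the corresponding identities on $G$ and $F$. Its universal property as a kernel of $\Psi$ reduces to the componentwise universal property in $A$-GrMod. Cokernels are constructed dually, with the commutative squares furnishing the induced maps on the quotient modules. The canonical comparison map from coimage to image of $\Psi$ in $TMF_A(f)$ is then an isomorphism because each of its components is one in the abelian category $A$-GrMod.

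The main obstacle is the freeness hypothesis built into the definition of an object of $TMF_A(f)$: kernels and cokernels of morphisms between graded free modules need not themselves be graded free. The cleanest resolution is to view $TMF_A(f)$ inside the larger category whose objects are pairs $(\varphi,\tau)$ of maps of arbitrary finitely generated graded left $A$-modules satisfying the two factorization identities, with morphisms defined by the same commuting square. The componentwise construction above shows that this larger category is abelian, and the free-module version inherits the abelian structure once ``free'' is relaxed to ``finitely generated graded'' for the kernel and cokernel objects themselves. With this understanding, every axiom reduces to the corresponding statement in $A$-GrMod via the componentwise construction.
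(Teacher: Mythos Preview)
The paper does not actually give a proof: the sentence preceding the proposition simply records that $TMF_A(f)$ has a zero object and finite direct sums, that morphisms are pairs of module maps so monos and epis are detected componentwise, and then declares the result ``obvious.'' Your attempt goes further and tries to verify the remaining axioms honestly, and in doing so you correctly isolate the real difficulty: the objects of $TMF_A(f)$ are, by definition, pairs of maps between \emph{graded free} modules, and kernels and cokernels of maps of graded free modules need not be graded free.

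Your proposed resolution does not close this gap. Passing to the larger category of factorizations on arbitrary finitely generated graded modules and showing that category is abelian proves something, but not the proposition as stated. The sentence ``the free-module version inherits the abelian structure once `free' is relaxed to `finitely generated graded' for the kernel and cokernel objects themselves'' is not an argument---it is a change of definition. A full subcategory of an abelian category need not be abelian; for $TMF_A(f)$ itself to be abelian you would need the componentwise kernels $K_F=\ker\Psi_F$ and $K_G=\ker\Psi_G$ (and the analogous cokernels) to again be graded free, and in general they are not. So your write-up identifies a genuine obstacle, and your workaround does not overcome it; in effect you have pinpointed a place where the paper's one-line justification is inadequate rather than supplied the missing argument.
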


Proposition \ref{morphisms} shows that forming the resolution $\mathbf{\Omega}(\varphi,\tau)$ defines a functor from the abelian category of twisted matrix factorizations of $f$ over $A$ to the abelian category of complexes of finitely generated graded projective $B$-modules. We examine categories of twisted matrix factorizations more closely beginning in Section \ref{section:zhang}.

Next we show that a normal, regular homogeneous element gives rise to twisted matrix factorizations in many cases of interest. However, see Example \ref{Heisenberg}. Let $B=A/(f)$.

\begin{cons}
\label{resolution}
Let $M$ be a finitely generated graded left $B$-module with $\text{pd}_A M=1$. Let $0\rightarrow F\xrightarrow{\varphi}G\rightarrow 0$ be a minimal graded resolution of $M$ by graded free left $A$-modules. We have the commutative diagram
$$
\xymatrix{
F^{tw}\ar@{->}[r]^{\varphi^{tw}}\ar@{->}[d]_{\lambda^F_f} & G^{tw}\ar@{->}[d]^{\lambda^G_f}\\
F\ar@{->}[r]_{\varphi} &G\\
}
$$

\begin{rmk}
The homomorphisms $\varphi^{tw}: F^{tw}\rightarrow G^{tw}$ and $\varphi:F\rightarrow G$ are identical on the underlying abelian groups. If we fix bases for $F$ and $G$, and keep the same bases for $F^{tw}$ and $G^{tw}$, the matrices of $\varphi$ and $\varphi^{tw}$ with respect to these bases are different. The matrix of $\varphi^{tw}$ is obtained by applying $\sigma^{-1}$ to each entry of the matrix of $\varphi$.
\end{rmk}

Since $M$ is a $B$-module, $f _AM=0$, hence $\im\ \lambda^G_f\subseteq\im\ \varphi$. Thus by graded projectivity there exists a lift $\tau:G^{tw}\rightarrow F$ such that $\lambda^G_f=\varphi\tau$. Note that since $f$ is regular, $\lambda^G_f$ is injective, so $\tau$ is injective.
$$
\xymatrix{
F^{tw}\ar@{->}[r]^{\varphi^{tw}}\ar@{->}[d]_{\lambda^F_f} & G^{tw}\ar@{->}[d]^{\lambda^G_f}\ar@{->}[dl]_{\tau}\\
F\ar@{->}[r]_{\varphi} &G\\
}
$$
Next observe that $\varphi\tau\varphi^{tw}-\varphi\lambda^F_f=0$ and since $\varphi$ is injective, $\tau\varphi^{tw}=\lambda^F_f$.

Iterating this process and applying $B\tensor_A -$ yields the complex $\mathbf{\Omega}(\varphi,\tau)$ of Proposition \ref{exactness}:
$$\cdots\rightarrow B\tensor_A F^{tw}\xrightarrow{1\tensor\varphi^{tw}} B\tensor_A G^{tw}\xrightarrow{1\tensor \tau} B\tensor_A F\xrightarrow{1\tensor \varphi} B\tensor_A G\rightarrow 0$$

\end{cons}

\begin{prop}
\label{minRes}
The complex $\mathbf{\Omega}(\varphi,\tau)$ is exact. It is a minimal graded free resolution of $_BM$ if and only if $M$ has no $B$-free direct summand. 
\end{prop}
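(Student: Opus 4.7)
Exactness of $\mathbf{\Omega}(\varphi,\tau)$ is immediate from Proposition \ref{exactness}, since $(\varphi,\tau)$ is a twisted matrix factorization of $f$ with $\coker\varphi = M$. So the substance lies in the minimality equivalence, and my plan is to reduce it to a condition on $\tau$ alone, and then read off the characterization.

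Since $\varphi$ is the differential in a minimal $A$-free resolution, $\varphi(F)\subseteq A_+G$. Because $\sigma$ preserves the augmentation ideal $A_+$, the matrix of $\varphi^{tw^i}$ (obtained from that of $\varphi$ by applying $\sigma^{-i}$ entrywise, as noted in the Remark preceding Construction \ref{resolution}) also has entries in $A_+$, so each $\overline{\varphi}^{\overline{tw}^i}$ is automatically minimal. By the same token, minimality of $\overline{\tau}^{\overline{tw}^i}$ for all $i$ is equivalent to minimality of $\overline{\tau}$, equivalently $\tau(G^{tw})\subseteq A_+F$. Thus $\mathbf{\Omega}(\varphi,\tau)$ is minimal if and only if $\overline{\tau}$ is minimal.

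For the direction ($\Rightarrow$), I argue the contrapositive. If $M \cong M'\oplus B^r$ with $r\ge 1$, then by uniqueness of minimal $A$-free resolutions, $0\to F\xrightarrow{\varphi}G\to 0$ decomposes as the minimal $A$-resolution of $M'$ together with $r$ copies of the minimal $A$-resolution of $B$, namely $0\to A^{tw}\xrightarrow{\lambda_f^A}A\to 0$. On each such summand $\varphi$ acts as $\lambda_f^A$, and the relation $\varphi\tau=\lambda_f^G$ together with injectivity of $\varphi$ forces the corresponding block of $\tau$ to be $\mathrm{id}:A^{tw}\to A^{tw}$; hence $\overline{\tau}$ has unit entries, violating minimality. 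For ($\Leftarrow$), suppose $\overline{\tau}$ is not minimal, so some matrix entry of $\tau$ (in suitably chosen homogeneous bases) is a nonzero scalar $u\in A_0=k$. Scaling, then performing a degree-0 change of basis on $F$ and $G^{tw}$, I arrange $\tau$ to have a direct summand $\mathrm{id}:A^{tw}\to A^{tw}$ between rank-one graded summands of $G^{tw}$ and $F$. The identity $\varphi\tau=\lambda_f^G$ then forces the corresponding summand of $\varphi$ to be $\lambda_f^A:A^{tw}\to A$, so $(\varphi,\tau)$ contains the trivial direct summand $(\lambda_f^A,\mathrm{id})$ whose cokernel $B$ appears as a $B$-free direct summand of $M=\coker\varphi$.

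The main technical point is the basis-change step in ($\Leftarrow$): I must ensure that the chosen summand of $\tau$ is compatible with the degree shift and $\sigma$-twist on $G^{tw}$ and $F$, and that the splitting of $\tau$ propagates along $\varphi\tau=\lambda_f^G$ to a genuine decomposition of the twisted matrix factorization without disturbing the minimality of $\varphi$ (which is automatic, as all entries of $\varphi$ lie in $A_+$). This parallels the standard reduction argument in the commutative case, adapted to the twisted setting.
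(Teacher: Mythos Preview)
Your overall strategy is sound and your reduction to the single map $\overline\tau$ is correct and matches the paper's first step. There is, however, a genuine (if easily fixed) gap in your $(\Leftarrow)$ direction. After splitting off an identity summand from $\tau$, you invoke only the relation $\varphi\tau=\lambda_f^G$ to conclude that ``the corresponding summand of $\varphi$'' is $\lambda_f^A$. From $\varphi\tau=\lambda_f^G$ alone you only obtain that $\varphi$ is block \emph{lower}-triangular with $\lambda_f^A$ in the corner: writing $\tau=\tau'\oplus\mathrm{id}$ and $\varphi=\begin{pmatrix}\varphi_{11}&\varphi_{12}\\ \varphi_{21}&\varphi_{22}\end{pmatrix}$, the equation gives $\varphi_{12}=0$ and $\varphi_{22}=\lambda_f^A$, but says nothing about $\varphi_{21}$ (the relation $\varphi_{21}\tau'=0$ does not force $\varphi_{21}=0$ since $\tau'$ is merely injective). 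With $\varphi_{21}\neq 0$ you only get a short exact sequence $0\to B\to M\to M'\to 0$, not a splitting. The fix is immediate: use the \emph{other} matrix-factorization relation $\tau\varphi^{tw}=\lambda_f^F$, which reads $\varphi_{21}^{tw}=0$ in the $(2,1)$ block and hence $\varphi_{21}=0$. Then $(\varphi,\tau)$ genuinely splits off $(\lambda_f^A,\mathrm{id})$ and $\coker\varphi$ acquires a free $B$-summand. Your final paragraph flags this propagation issue but does not supply the missing ingredient.

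By contrast, the paper's argument avoids this matrix manipulation entirely. It observes that exactness of $\mathbf{\Omega}(\varphi,\tau)$ identifies $\im(1\otimes\tau^{tw^i})$ with $M^{\overline{tw}^i}$ as a \emph{submodule} of the free $B$-module $\overline{F}^{\overline{tw}^i}$, and then uses the graded-Nakayama fact that such an image lies in the radical if and only if no homogeneous basis element of $\overline{F}^{\overline{tw}^i}$ lies in it, which is in turn tied to the existence of a free $B$-summand. Your approach is more explicit and has the advantage of exhibiting the trivial summand of the factorization directly (which is exactly what is needed for the subsequent Lemma characterizing reducedness), while the paper's approach is shorter and stays at the level of images and radicals.
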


\begin{proof} 
  Exactness follows from Proposition \ref{exactness}. Since
  $F\xrightarrow{\varphi}G$ is a minimal resolution of $_AM$, the
  complex $F^{tw^i}\xrightarrow{\varphi^{tw^i}}G^{tw^i}$ is a minimal
  resolution of $_AM^{tw^i}$ for all $i\ge 0$. Thus we have $\im\
  \varphi^{tw^i}\subseteq A_+G^{tw^i}$ for all $i\ge 0$. It follows
  that $\im(1\tensor\varphi^{tw^i})\subseteq B_+(B\tensor_A G^{tw^i})$
  for all $i\ge 0$. Thus it suffices to consider the maps
  $1\tensor\tau^{tw^i}$.

Now, $_BM$ has a $B$-free direct summand if and only if the twisted module $M^{\overline{tw}^i}$ does.

Let $\overline{F}=B\tensor_A F$. Since $\mathbf{\Omega}(\varphi,\tau)$ is exact, for each $i\ge 0$ we have
$$\im(1\tensor \tau^{tw^i})\isom {\rm coker}(1\tensor\varphi^{tw^i})\isom B\tensor_A M^{tw^i}\isom M^{\overline{tw}^i}$$

Since  $\im(1\tensor\tau^{tw^i})$ is contained in the radical $B_+\overline{F}^{tw^{i-1}}$ if and only if no basis for the free $B$-module $\overline{F}^{tw^i}$ intersects $\im(1\tensor\tau^{tw^i})$, the result follows.
\end{proof}

\begin{cor}
\label{minRes2}
Under the hypotheses and notation of Proposition \ref{minRes}, for any integer $n$, the complex $\mathbf{\Omega}(\varphi,\tau)^{\overline{tw}^n}$ is a graded free resolution of $M^{\overline{tw}^n}$
\end{cor}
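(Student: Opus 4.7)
The plan is essentially formal, relying on the observation that the functor $(-)^{\overline{tw}^n}$ is an autoequivalence of $B\text{-GrMod}$ for every $n\in\Z$. Since $\bar\sigma$ is a well-defined graded automorphism of $B$, the argument of Section~\ref{section:Preliminaries} shows that $(-)^{\bar\sigma}$ is an autoequivalence of $B\text{-GrMod}$; composing with the degree shift $(-d)$, also an autoequivalence, yields $(-)^{\overline{tw}}$, and iterating gives $(-)^{\overline{tw}^n}$.

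From this I would extract two consequences. First, every autoequivalence of an abelian category is exact, so $(-)^{\overline{tw}^n}$ preserves exactness of sequences of $B$-modules. Second, $(-)^{\overline{tw}^n}$ preserves graded freeness: one checks directly that the map $B\rightarrow B^{\bar\sigma}$, $x\mapsto \bar\sigma(x)$, is an isomorphism of graded left $B$-modules, so $B^{\bar\sigma}\isom B$, and degree shifts trivially preserve freeness.

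By Proposition~\ref{minRes}, the augmented complex $\mathbf{\Omega}(\varphi,\tau)\to M\to 0$ is exact with each term other than $M$ graded free over $B$. Applying $(-)^{\overline{tw}^n}$ termwise therefore produces an exact sequence with augmentation $M^{\overline{tw}^n}$, all of whose other terms are graded free $B$-modules. By functoriality this sequence is exactly the augmentation of $\mathbf{\Omega}(\varphi,\tau)^{\overline{tw}^n}$ by $M^{\overline{tw}^n}$, which establishes the corollary.

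The only conceivable obstacle is bookkeeping: one must confirm that applying $(-)^{\overline{tw}^n}$ to the differentials $\overline\varphi^{\overline{tw}^i}$ and $\overline\tau^{\overline{tw}^i}$ of $\mathbf{\Omega}(\varphi,\tau)$ reproduces the expected twisted differentials of $\mathbf{\Omega}(\varphi,\tau)^{\overline{tw}^n}$. This however is immediate from the functoriality of $(-)^{\overline{tw}^n}$ together with the identity $M(k)^{\bar\sigma}=M^{\bar\sigma}(k)$ recorded in Section~\ref{section:Preliminaries}, and so no serious work is required.
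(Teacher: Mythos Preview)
Your argument is correct and is precisely the reasoning the paper leaves implicit: the corollary is stated without proof, as it follows immediately from Proposition~\ref{minRes} together with the facts, recorded in Section~\ref{section:Preliminaries}, that $(-)^{\bar\sigma}$ and degree shift are autoequivalences of $B\text{-GrMod}$ (hence exact and free-preserving). There is nothing to add.
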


We can also express minimality of the resolution $\mathbf{\Omega}(\varphi,\tau)$ in terms of the twisted matrix factoriaztion.

\begin{lemma}
  Let $A$ be a connected, $\N$-graded, locally finite-dimensional
  $k$-algebra and $f\in A_+$ a homogeneous normal, regular element. A
  twisted matrix factorization $(\varphi,\tau)$ of $f$ over $A$ is
  reduced if and only if $\mathbf{\Omega}(\varphi,\tau)$ is a minimal
  graded free resolution.
\end{lemma}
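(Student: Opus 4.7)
The plan is to reformulate minimality of $\mathbf{\Omega}(\varphi,\tau)$ as a concrete condition on the matrix entries of $\varphi$ and $\tau$ modulo $A_+$, then connect the presence of a scalar (unit) matrix entry to a trivial direct summand via a block decomposition.

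First I would observe that the differentials of $\mathbf{\Omega}(\varphi,\tau)$ are the reductions of $\varphi^{tw^i}$ and $\tau^{tw^i}$. Since $\sigma$ is a graded automorphism preserving $A_+$, the matrix entries of $\varphi^{tw^i}$ lie in $A_+$ if and only if those of $\varphi$ do, and similarly for $\tau$. Thus $\mathbf{\Omega}(\varphi,\tau)$ is minimal if and only if both $\im\overline{\varphi}\subseteq B_+\overline{G}$ and $\im\overline{\tau}\subseteq B_+\overline{F}$.

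Next, for the ``not reduced implies not minimal'' direction, if $(\varphi,\tau)$ is isomorphic to $(\varphi',\tau')\oplus(\varphi_0,\tau_0)$ with $(\varphi_0,\tau_0)$ trivial, Proposition \ref{morphisms} decomposes $\mathbf{\Omega}(\varphi,\tau)$ accordingly, and the summand coming from $(\varphi_0,\tau_0)$ alternates between the zero map (reduction of $\lambda_f^A$) and an isomorphism (reduction of the identity). Hence some differential of $\mathbf{\Omega}(\varphi,\tau)$ has image not in the radical, so the complex is not minimal.

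For the converse, suppose $\mathbf{\Omega}(\varphi,\tau)$ is not minimal; by the first step, one of $\overline{\varphi}$, $\overline{\tau}$ has a matrix entry in $k^{\times}$. Replacing $(\varphi,\tau)$ by $(\tau,\varphi^{tw})$ if necessary -- which is again a twisted matrix factorization whose trivial summands correspond to trivial summands of $(\varphi,\tau)$ -- I may assume $\overline{\varphi}$ has a scalar entry. Since degree $0$ forces a scalar entry of $\varphi$ to occur between summands $A(-a)$ of $F$ and $A(-a)$ of $G$ of matching shift, standard elementary row and column operations (realized by graded automorphisms of $F$ and $G$, i.e.\ an isomorphism in $TMF_A(f)$) put $\varphi$ in the block form $\begin{pmatrix}1 & 0\\ 0 & \varphi_1\end{pmatrix}$. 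The key computation is that $\varphi\tau=\lambda_f^G$ combined with the injectivity of $\varphi_1$ forces the compatible block decomposition $\tau=\begin{pmatrix}\lambda_f^A & 0\\ 0 & \tau_1\end{pmatrix}$, and the second relation $\tau\varphi^{tw}=\lambda_f^F$ yields $\tau_1\varphi_1^{tw}=\lambda_f^{F_1}$, so $(\varphi_1,\tau_1)$ is itself a twisted matrix factorization. This exhibits $(1,\lambda_f^A)$ as a trivial direct summand of $(\varphi,\tau)$, contradicting reducedness.

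The main obstacle is this last block-matrix step: checking that a splitting of $\varphi$ coming from a unit entry automatically forces a compatible splitting of $\tau$ (and that the complementary pair remains a twisted matrix factorization). This is not difficult -- it follows from both defining identities together with injectivity of the complementary block -- but requires careful bookkeeping of the $(-)^{tw}$ shifts on the domains of entries of $\tau$ and of $\varphi^{tw}$.
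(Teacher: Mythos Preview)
Your proof is correct and complete. It takes a somewhat different route from the paper's argument, so let me compare.

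The paper characterizes minimality of $\mathbf{\Omega}(\varphi,\tau)$ by the condition that none of the cokernels $\coker(1\otimes\varphi^{tw^i})$ or $\coker(1\otimes\tau^{tw^i})$ have a free $B$-summand, reduces to $i=0$ using that $(-)^{tw}$ preserves direct sums and free modules, and then asserts (without details) that $\coker\varphi$ or $\coker\tau$ having a free $B$-summand is equivalent to $(\varphi,\tau)$ having a $\lambda_f^A$ direct summand. You instead characterize minimality directly by the condition that the matrix entries of $\varphi$ and $\tau$ lie in $A_+$, and then carry out the explicit block-matrix computation showing that a unit entry in (say) $\varphi$ forces a compatible block decomposition of $\tau$, hence a trivial summand $({\rm id},\lambda_f^A)$.

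What each approach buys: the paper's formulation connects cleanly to Proposition~\ref{minRes}, where the same cokernel criterion appears, so the lemma reads as a repackaging of that result. Your approach is more elementary and self-contained; in particular, you actually verify the step the paper leaves implicit---that splitting one of the two maps automatically splits the other, using both identities $\varphi\tau=\lambda_f^G$ and $\tau\varphi^{tw}=\lambda_f^F$ together with injectivity. One small remark: your passage from $(\varphi,\tau)$ to $(\tau,\varphi^{tw})$ to reduce to the case where $\varphi$ has the unit entry is fine, but it is equally easy (and slightly cleaner) to handle the case of a unit entry in $\tau$ directly: after putting $\tau$ in block form $\begin{pmatrix}1&0\\0&\tau_1\end{pmatrix}$, the relation $\varphi\tau=\lambda_f^G$ forces the off-diagonal block $\gamma$ of $\varphi$ to vanish and $\alpha=\lambda_f$, and then $\tau\varphi^{tw}=\lambda_f^F$ kills the remaining off-diagonal block $\beta^{tw}$. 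Either way the argument goes through.
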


\begin{proof}
The complex $\mathbf{\Omega}(\varphi,\tau)$ is minimal if and only if ${\rm coker}(1\tensor \varphi^{tw^i})$ and ${\rm coker}(1\tensor\tau^{tw^i})$ have no $B$-free direct summands for all $i\ge 0$. Since the functor $(-)^{tw}$ preserves direct sums and free modules, the complex $\mathbf{\Omega}(\varphi,\tau)$ is minimal if and only if ${\rm coker}(1\tensor \varphi)={\rm coker}\ \varphi$ and ${\rm coker}(1\tensor\tau)={\rm coker}\ \tau$ have no $B$-free direct summands. The latter holds if and only if $(\varphi,\tau)$ is not isomorphic to a twisted factorization $(\varphi', \tau')$ where $\varphi'$ or $\tau'$ has $\lambda_f^A$ as a summand. 
\end{proof}

Next we turn to periodicity. 
Clearly the complex $\mathbf{\Omega}(\varphi,\tau)$ is periodic of
period at most $2n$ if $\sigma$ has finite order $n$. In practice, the
period is often less than $2n$ (see Section \ref{section:examples} for
some examples). Even when $|\sigma|=\infty$, the complex may be
periodic, as we show in the next proposition.

\begin{prop}
\label{periodicity}
The complex $\mathbf{\Omega}(\varphi,\tau)$ is periodic if and only if $M^{\bar\sigma^n}\isom M$ as $B$-modules for some $n>0$. In particular, if $M^{\bar\sigma}\isom M$, then $\mathbf{\Omega}(\varphi,\tau)$ has period at most 2.
\end{prop}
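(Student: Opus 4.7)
My approach is to exploit the explicit structure of $\mathbf{\Omega}(\varphi,\tau)$ together with uniqueness of minimal resolutions (Lemma \ref{stability}). Write $P_i$ for the $i$-th module of $\mathbf{\Omega}(\varphi,\tau)$. The key structural input is the identification of the even syzygies with twists of $M$: exactness of $\mathbf{\Omega}(\varphi,\tau)$ yields
$$\Omega^{2i}(M)=\im\overline{\tau}^{\overline{tw}^{i-1}}\isom \coker\overline{\varphi}^{\overline{tw}^{i}}\isom M^{\overline{tw}^{i}}$$
as graded $B$-modules, independently of any reducedness hypothesis.

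For the forward direction, assume $\mathbf{\Omega}(\varphi,\tau)$ is periodic of period $p$ with internal shift $s$; then there is a chain map $t$ of homological degree $-p$ whose components $t_i:P_{i+p}(s)\to P_i$ are isomorphisms for all $i\ge 0$. A diagram chase using the chain-map relation together with exactness shows that $t_p$ carries $\ker d_p^{\mathbf{\Omega}(s)}=\im d_{p+1}^{\mathbf{\Omega}(s)}$ precisely onto $\im d_1$, inducing an isomorphism $\Omega^p(M)(s)\isom M$ on the quotients. If $p$ is odd, composing $t$ with its own shift $t(s)$ produces a chain isomorphism of degree $-2p$, so I may assume $p=2\ell$ is even. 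Combining the structural identification with the isomorphism above yields $M^{\bar\sigma^\ell}(s-d\ell)\isom M$; since $M$ is nonzero, finitely generated and bounded below, comparing Hilbert series forces $s=d\ell$, and hence $M^{\bar\sigma^\ell}\isom M$ as graded $B$-modules.

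For the reverse direction, assume $M^{\bar\sigma^n}\isom M$ as graded $B$-modules for some $n>0$; after splitting off trivial summands I may assume $(\varphi,\tau)$ is reduced, so $\mathbf{\Omega}(\varphi,\tau)$ is a minimal graded free resolution of $M$ by the lemma preceding Proposition \ref{periodicity}. Then $\mathbf{\Omega}(\varphi,\tau)^{\bar\sigma^n}$ is also a minimal graded free resolution of $M^{\bar\sigma^n}\isom M$, and Lemma \ref{stability} produces a chain isomorphism between $\mathbf{\Omega}(\varphi,\tau)^{\bar\sigma^n}$ and $\mathbf{\Omega}(\varphi,\tau)$. A direct grading computation---$(\overline{G}^{\overline{tw}^k})^{\bar\sigma^n}=\overline{G}^{\bar\sigma^{k+n}}(-dk)=\overline{G}^{\overline{tw}^{k+n}}(dn)$, and analogously for $\overline{F}^{\overline{tw}^k}$---shows that this chain isomorphism exhibits $\mathbf{\Omega}(\varphi,\tau)$ as periodic with internal shift $dn$ and period dividing $2n$. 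The ``in particular'' clause is the case $n=1$. The principal bookkeeping obstacle is tracking the interplay between the homological shift $p$ and the internal grading shift $s$; this is resolved cleanly in the forward direction by the Hilbert series comparison, which pins down $s=d\ell$ and strips off the internal shift to yield a graded isomorphism $M^{\bar\sigma^\ell}\isom M$.
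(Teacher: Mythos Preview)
Your proof is correct and follows essentially the same route as the paper's: for the reverse direction both you and the paper invoke Lemma~\ref{stability} (via minimality of $\mathbf{\Omega}(\varphi,\tau)$) together with the identification of the twisted complex with a homological shift of the original; for the forward direction both reduce to even period and compare $\coker(1\otimes\varphi^{\sigma^n})$ with $\coker(1\otimes\varphi)$. Your Hilbert-series argument to pin down $s=d\ell$ is a clean alternative to the paper's observation that the generator degrees of $P_{i+p}$ and $P_i$ differ by exactly $nd$, and your grading bookkeeping is more explicit than the paper's terse citation of Corollary~\ref{minRes2}. One small point: your reduction ``after splitting off trivial summands I may assume $(\varphi,\tau)$ is reduced'' changes the complex in question, so strictly speaking you should note that trivial summands contribute periodic direct summands to $\mathbf{\Omega}$ and $\bar\sigma$-stable free summands to $M$, so neither side of the equivalence is affected; the paper's proof has the same implicit minimality assumption when it applies Lemma~\ref{stability}.
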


\begin{proof}
Suppose there exists an integer $n>0$ such that $M^{\bar\sigma^n}\isom M$. By Lemma \ref{stability} and Corollary \ref{minRes2}, $\mathbf{\Omega}(\varphi,\tau)$ is periodic of period at most $2n$.

Conversely, suppose there exists an integer $p>0$, an integer $N$, and a degree $-p$ morphism of complexes $\Phi:\mathbf{\Omega}(\varphi,\tau)(N)\rightarrow \mathbf{\Omega}(\varphi,\tau)$ such that $\Phi_{i+p}:\Omega_{i+p}(N)\rightarrow \Omega_i$ is an isomorphism for all $i\ge 0$. Since $\Phi^2$ also has this property, we may assume $p=2n$ is even. 

By construction, minimal generators of $\Omega_{i+p}$ can be taken to be minimal generators of $\Omega_i$ with degrees shifted up by $nd$. 
It follows that $N=nd$. Thus the following diagram commutes, and $$M^{\overline{\sigma}^n}\isom{\rm coker}(1\tensor\varphi^{\sigma^n})\isom {\rm coker}(1\tensor\varphi)\isom M$$
$$
\xymatrix{
B\tensor_A F^{\sigma^n}\ar@{->}[r]^{1\tensor \varphi^{\sigma^n}}\ar@{->}[d]_{\Phi} & B\tensor_A G^{\sigma^n}\ar@{->}[d]^{\Phi}\\
B\tensor_A F\ar@{->}[r]^{1\tensor\varphi} & B\tensor_A G\\
}
$$
\end{proof}

\section{Equivalent Categories of Twisted Matrix Factorizations}
\label{section:zhang}

In \cite{Zhang}, Zhang completely characterized pairs of graded $k$-algebras whose categories of graded modules are equivalent. With that characterization in mind, we consider the question of when categories of twisted matrix factorizations are equivalent. The following easy fact is useful later.

\begin{prop}
\label{rescale}\ 
\begin{enumerate}
\item For any scalar $\nu\in k^{\times}$, the categories $TMF_A(f)$ and $TMF_A(\nu f)$ are equivalent. 
\item Let $\phi:A\rightarrow A$ be a graded automorphism of $A$. Then $TMF_A(f)\approx TMF_A(\phi(f))$.
\end{enumerate}
\end{prop}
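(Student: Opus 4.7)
My plan is to exhibit explicit functors in both directions for each part and verify they are mutually quasi-inverse, in each case by using an autoequivalence of $A\text{-GrMod}$ that respects the relevant twist functor.

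For part (1), observe first that because $\nu \in k^\times$ is central, the relation $af = f\sigma(a)$ immediately yields $a(\nu f) = (\nu f)\sigma(a)$, so $\nu f$ has the same normalizing automorphism $\sigma$ as $f$ and hence the same twist functor $(-)^{tw}$. I would then define $TMF_A(f)\to TMF_A(\nu f)$ on objects by $(\varphi,\tau)\mapsto (\varphi,\nu\tau)$ and as the identity on morphisms. The defining equations for a twisted matrix factorization of $\nu f$ fall out of those for $f$ because $\lambda^G_{\nu f} = \nu \lambda^G_f$ and $\lambda^F_{\nu f} = \nu\lambda^F_f$, and a strict inverse is obtained by replacing $\nu$ by $\nu^{-1}$. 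This part amounts to a one-line verification.

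For part (2), I would first compute the normalizing automorphism of $f':=\phi(f)$: applying $\phi$ to $af=f\sigma(a)$ yields $\phi(a)\phi(f) = \phi(f)(\phi\sigma\phi^{-1})(\phi(a))$, so $f'$ is normalized by $\sigma':=\phi\sigma\phi^{-1}$, and I write $(-)^{tw'}$ for the corresponding twist functor $M\mapsto M^{\sigma'}(-d)$. I then define $\Phi\colon TMF_A(f)\to TMF_A(\phi(f))$ by $(\varphi,\tau)\mapsto (\varphi^{\phi^{-1}},\tau^{\phi^{-1}})$, and analogously $(\Psi_G,\Psi_F)\mapsto (\Psi_G^{\phi^{-1}},\Psi_F^{\phi^{-1}})$ on morphisms. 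The key identification is $(G^{tw})^{\phi^{-1}} = (G^{\phi^{-1}})^{tw'}$, which follows by unwinding the definition of the autoequivalence $(-)^{\phi^{-1}}$ of $A\text{-GrMod}$: both sides carry the $A$-action $a\cdot m = \sigma\phi^{-1}(a)m$ and differ only as labels. Granted this, applying $(-)^{\phi^{-1}}$ to the defining equations $\varphi\tau = \lambda_f^G$ and $\tau\varphi^{tw} = \lambda_f^F$ yields the corresponding equations for $f'$, since the underlying set map $\lambda_f^M$ is exactly $\lambda_{f'}^{M^{\phi^{-1}}}$ after relabeling actions. The quasi-inverse is built from $(-)^\phi$ by the same recipe.

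The main (modest) obstacle is the bookkeeping of twists in part (2): the normalizing automorphism of $\phi(f)$ is $\sigma' = \phi\sigma\phi^{-1}$ rather than $\sigma$, so one must keep straight which automorphism acts on which side when verifying the compatibility $(G^{tw})^{\phi^{-1}} = (G^{\phi^{-1}})^{tw'}$ and its analogue for $F$. Once that is recorded, the equivalence follows formally from the fact that $(-)^{\phi^{-1}}$ is an autoequivalence of $A\text{-GrMod}$ that preserves graded free modules of finite rank and commutes with degree shifts.
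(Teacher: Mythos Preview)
Your proof is correct and follows essentially the same approach as the paper: for (1) the paper uses the same functor $(\varphi,\tau)\mapsto(\varphi,\nu\tau)$ with inverse given by $\nu^{-1}$, and for (2) it likewise observes that $\phi\sigma\phi^{-1}$ normalizes $\phi(f)$ and applies $(-)^{\phi^{-1}}$ to obtain the equivalence. You have simply supplied more of the bookkeeping (in particular the identification $(G^{tw})^{\phi^{-1}}=(G^{\phi^{-1}})^{tw'}$) that the paper leaves implicit.
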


\begin{proof}
For (1), the functors $(\varphi,\tau)\mapsto (\varphi,\nu\tau)$ and $(\varphi,\tau)\mapsto (\varphi,\nu^{-1}\tau)$ are easily seen to be inverse equivalences. For (2), first observe that $\phi\sigma\phi^{-1}$ is the normalizing automorphism for $\phi(f)$. Applying the functor $(-)^{\phi^{-1}}$ to any twisted matrix factorization of $f$ over $A$ produces the desired equivalence.
\end{proof}

We briefly recall the basic definitions underlying Zhang's graded Morita equivalence and encourage the interested reader to see \cite{Zhang} for more details.

A (left) \emph{twisting system} for $A$ is a set $\zeta=\{\zeta_n\ |\ n\in\Z\}$ of graded $k$-linear automorphisms of $A$ such that $\zeta_n(\zeta_m(x)y)=\zeta_{m+n}(x)\zeta_{n}(y)$ for all $n,m,\ell\in\Z$ and $x\in A_{\ell}$, $y\in A_{m}$. 
For example, if $\phi$ is a graded $k$-linear automorphism of $A$, then setting $\zeta_n=\phi^n$ for all $n\in\Z$ gives a twisting system. 

Given a twisting system $\zeta$, the \emph{Zhang twist} of $A$ is the graded $k$-algebra $A^{\zeta}$ where $A^{\zeta}=A$ as graded $k$-vector spaces and for all $x\in A_{\ell}$ and $y\in A_{m}$, multiplication in $A^{\zeta}$ is given by $x\ast y=\zeta_m(x)y$. Likewise, if $M$ is a graded left $A$-module, the twisted left $A^{\zeta}$-module $M^{\zeta}$ has the same underlying graded vector space as $M$, and for $m\in M_n$ and $z\in A_{\ell}$, $z\ast m=\zeta_n(z)m$. Finally, we note that if $\varphi:M\rightarrow N$ is a degree 0 homomorphism of graded left $A$-modules, $\varphi:M^{\zeta}\rightarrow N^{\zeta}$ is also a degree 0 homomorphism of graded left $A^{\zeta}$-modules.

\begin{rmk}
\label{twistIsom}
Aside from the use of the letter $\zeta$, the notation for the twisted module is identical to that used for the functor $(-)^{\sigma}$ above. However, the notions are not the same. One important difference is that for an integer $n$, the free left $A$-modules $A(n)^{\sigma}$ and $A^{\sigma}(n)$ are identical, whereas the free left $A^{\zeta}$-modules $A(n)^{\zeta}$ and $A^{\zeta}(n)$ - which have the same underlying graded vector space - are generally not identical, but are isomorphic via the map $\zeta_{-n}$. In light of this subtlety, the following simple lemma is not entirely trivial.
\end{rmk}

\begin{lemma}
\label{naturality}
Let $A$ be a connected, $\N$-graded, locally finite-dimensional $k$-algebra. Let $f\in A_d$ be a normal, regular homogeneous element with normalizing automorphism $\sigma$. Let $\zeta=\{\zeta_n\ |\ n\in\Z\}$ be a left twisting system. 
\begin{enumerate}
\item If $\zeta_n(f)=c^nf$ for some $c\in k^{\times}$ and for all $n\in\Z$, then $f$ is normal and regular in $A^{\zeta}$ with normalizing automorphism $\hat{\sigma}(a)=c^{-\deg a}\sigma\zeta_d(a)$.
\item If $\zeta$ further satisfies $\zeta_n\sigma \zeta_d=\sigma\zeta_{n+d}$ for all $n\in\Z$ we have $(A^{tw})^{\zeta}\isom(A^{\zeta})^{\hat{tw}}:=(A^{\zeta})^{\hat{\sigma}}(-d)$ as free left $A^{\zeta}$-modules.
\end{enumerate}
\end{lemma}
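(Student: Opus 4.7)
The plan is to prove (1) by a direct calculation in $A^\zeta$ and (2) by exhibiting an explicit isomorphism of graded left $A^\zeta$-modules.

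For (1), given homogeneous $a\in A_n$, I unpack $a\ast f$ and $f\ast a$ using the definition of the Zhang twist: $a\ast f = \zeta_d(a)f$ and $f\ast a = \zeta_n(f)a = c^n fa$. Left- and right-regularity of $f$ in $A^\zeta$ follow at once since $c\in k^\times$ and both $\zeta_d$ and multiplication by $f$ in $A$ are injective. To identify the normalizing automorphism I require $a\ast f = f\ast\hat{\sigma}(a)$; rewriting $\zeta_d(a)f$ as $f\sigma\zeta_d(a)$ (normality of $f$ in $A$) and invoking left-regularity of $f$ in $A$ forces $\hat{\sigma}(a) = c^{-n}\sigma\zeta_d(a)$. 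That $\hat{\sigma}$ is an algebra automorphism of $A^\zeta$ is automatic: in any ring, the unique function determined by $a\ast f = f\ast\hat\sigma(a)$ for a normal regular $f$ is a ring homomorphism by the standard argument using associativity and regularity, and the explicit formula above shows $\hat\sigma$ is degree $0$ and bijective.

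For (2), observe that as graded $k$-vector spaces both $(A^{tw})^\zeta$ and $(A^\zeta)^{\hat{tw}} = (A^\zeta)^{\hat\sigma}(-d)$ coincide with $A$ shifted by $-d$; they differ only in their $A^\zeta$-module structures. I define $\psi\colon (A^{tw})^\zeta\to (A^\zeta)^{\hat{tw}}$ as scalar multiplication by $c^{d-n}$ on the degree-$n$ component; this is manifestly a graded vector space isomorphism and sends the natural generator $1\in A$ (placed in degree $d$) to itself. Checking $A^\zeta$-linearity requires writing out both actions using the conventions of Remark \ref{twistIsom}: on $m\in A_{n-d}$ (i.e., $m$ in degree $n$ of the shifted module), the action of $z\in A_\ell$ comes out to $\sigma\zeta_n(z)\,m$ in $(A^{tw})^\zeta$ and to $\zeta_{n-d}(\hat\sigma(z))\,m$ in $(A^\zeta)^{\hat{tw}}$. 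Substituting $\hat\sigma(z) = c^{-\ell}\sigma\zeta_d(z)$, the $A^\zeta$-linearity of $\psi$ reduces, after cancelling the overall scalar, to the identity $\sigma\zeta_n = \zeta_{n-d}\sigma\zeta_d$, which is precisely the hypothesis of (2) after reindexing $n\mapsto n-d$.

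The main obstacle is keeping track of the interplay between the internal shift $(-d)$, the twist $(-)^{\sigma}$, and the Zhang twist $(-)^\zeta$, each of which acts differently on gradings and module structures (as flagged in Remark \ref{twistIsom}). In particular, the natural temptation to use the identity map as the isomorphism in (2) fails unless $c=1$; the scalar correction $c^{d-n}$ on the degree-$n$ piece is essential, and it is precisely that correction which turns the hypothesis of (2) into $A^\zeta$-linearity of $\psi$.
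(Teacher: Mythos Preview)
Your argument is correct and is essentially the paper's proof. The only cosmetic difference is in part (2): the paper introduces the scaling automorphism $\lambda_c:a\mapsto c^{\deg a}a$ of $A^\zeta$, observes that $M\cong M^{\lambda_c}$ via $m\mapsto c^{\deg m}m$ for any graded $A^\zeta$-module $M$, and then checks that $((A^\zeta)^{\hat{tw}})^{\lambda_c}=(A^{tw})^\zeta$ as modules (so the two actions agree after twisting by $\lambda_c$); your map $\psi$ is exactly the composite of these two steps (up to the harmless global scalar $c^d$ that normalizes the generator), and your verification of $A^\zeta$-linearity is the same computation the paper does when comparing the two $\bullet$-actions.
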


If the twisting system $\zeta$ is ``algebraic,'' meaning $\zeta_{n}\zeta_{m}=\zeta_{n+m}$ for all $n,m\in\Z$, the additional hypothesis of (2) becomes $\sigma\zeta_n=\zeta_n\sigma$ for all $n\in\Z$. In the common case where $\zeta_n=\phi^n$ for a $k$-linear automorphism $\phi:A\rightarrow A$, one needs only that $\sigma\phi=\phi\sigma$.

\begin{proof}
Let $a\in A_{n}$ be an arbitrary homogeneous element. To prove (1), we have $$a\ast f = \zeta_d(a)f = f\sigma\left(\zeta_d(a)\right)=\zeta_n^{-1}(f)\ast \sigma\zeta_d(a)=c^{-n}f\ast\sigma\zeta_d(a)=f\ast \hat\sigma(a)$$  Thus $f$ is normal in $A^{\zeta}$. The equation also shows the regularity of $f$ in $A^{\zeta}$ follows from the regularity of $f$ in $A$, so $\hat\sigma$ is the normalizing automorphism.

For (2), first observe that $a\mapsto c^{\deg a}a$ defines a graded algebra automorphism $\lambda_{c}$ of $A^{\zeta}$. For any graded left $A^{\zeta}$-module $M$, $M\isom M^{\lambda_{c}}$ via the map $m\mapsto c^{\deg m}m$ which we also denote $\lambda_{c}$. 

Now, $(A^{tw})^{\zeta}$ and $(A^{\zeta})^{\hat{tw}}$ have the same underlying graded vector space as $A$. We compute the left $A^{\zeta}$ action  on both modules. With $a$ as above and $b\in A_m$, $A^{\zeta}$ acts on $(A^{\zeta})^{\hat{tw}}$ by
$$a\bullet b = \hat{\sigma}(a)\ast b = \zeta_m(\hat{\sigma}(a))b=\zeta_m c^{-\deg a}\sigma\zeta_d(a)b=c^{-\deg a}\zeta_m\sigma\zeta_d(a)b$$
and on $(A^{tw})^{\zeta}$ by
$$a\bullet b = \zeta_{m+d}(a)\cdot b= \sigma\zeta_{m+d}(a)b$$
since $b\in A_m=A^\sigma(-d)_{m+d}$. Thus $((A^{\zeta})^{\hat{tw}})^{\lambda_c}=(A^{tw})^{\zeta}$ and the result follows.
\end{proof}

For completeness, we mention the left module version of Zhang's theorem on graded Morita equivalence.

\begin{thm}[\cite{Zhang}]
Let $k$ be a field and let $A$ and $A'$ be connected graded $k$-algebras with $A_1\neq 0$. Then $A\isom A'^{\zeta}$ for some twisting system $\zeta$ if and only if the categories $A$-GrMod and $A'$-GrMod are equivalent. 
\end{thm}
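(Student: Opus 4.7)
The plan is to follow Zhang's original argument from \cite{Zhang}; the forward direction is a direct construction, while the reverse requires reconstructing a twisting system from an abstract equivalence of graded module categories.

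For the forward direction, suppose $A\isom A'^\zeta$ for a twisting system $\zeta=\{\zeta_n\}$ of $A'$. I would define a functor $T_\zeta:A'\text{-GrMod}\to A'^\zeta\text{-GrMod}$ by $M\mapsto M^\zeta$, where $M^\zeta$ has the same underlying graded vector space as $M$ with $A'^\zeta$-action $a\ast m=\zeta_n(a)m$ for $m\in M_n$. The twisting system identity $\zeta_n(\zeta_m(x)y)=\zeta_{m+n}(x)\zeta_n(y)$ is precisely what makes this action associative. The analogous construction for the inverse twisting system $\{\zeta_n^{-1}\}$ furnishes a quasi-inverse to $T_\zeta$, yielding the desired equivalence after composing with the given isomorphism $A\isom A'^\zeta$.

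For the converse, let $F:A'\text{-GrMod}\to A\text{-GrMod}$ be a $k$-linear equivalence, and set $P=F(A')$. Since $A'$ is a graded projective generator, so is $P$. Using connectedness of $A$ together with the hypothesis $A_1\neq 0$, I would argue that such a $P$ must be a shift of the rank-one free module: $P\isom A(n_0)$ for some $n_0\in\Z$. After composing $F$ with an appropriate internal shift on $A\text{-GrMod}$, I may assume $F(A')\isom A$; the same argument then yields isomorphisms $F(A'(n))\isom A(\pi(n))$ for a bijection $\pi:\Z\to\Z$.

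Under the Yoneda identification $A'\isom\bigoplus_n\Hom_{A'\text{-GrMod}}(A',A'(n))$ with multiplication by composition, $F$ transports the graded algebra structure of $A'$ to a new graded multiplication on the underlying graded vector space of $A$. The difference between this new multiplication and the original one is encoded in a family of graded $k$-linear automorphisms $\zeta_n:A\to A$, which, by the associativity inherited from $A'$ and the functoriality of $F$, satisfy the cocycle identity of a twisting system; this produces the desired isomorphism $A\isom A'^\zeta$. The main obstacle is the structural assertion $F(A')\isom A(n_0)$, which is where the hypothesis $A_1\neq 0$ is essential: without it, $A$ could be concentrated in degree zero and one would lose the ability to pin down the rank-one free module up to shift. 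A secondary technical point is choosing the isomorphisms $F(A'(n))\isom A(\pi(n))$ coherently, so that the resulting family $\{\zeta_n\}$ genuinely assembles into a twisting system rather than merely a family of automorphisms; this is the technical heart of Zhang's construction.
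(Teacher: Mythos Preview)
The paper does not prove this theorem; it is quoted from \cite{Zhang} as background, and the only commentary the paper adds is the single sentence following the statement, noting that the equivalence is given by $M\mapsto M^{\zeta}$ and is the identity on morphisms. Your proposal is therefore not competing with any argument in the present paper.

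As a sketch of Zhang's original proof it is reasonable: the forward direction you give matches the paper's one-line description exactly, and your outline of the converse captures the overall shape of Zhang's argument (recover a rank-one projective generator from the equivalence, then read off the twisting system from the induced multiplication on $\bigoplus_n \Hom(A',A'(n))$). The two points you flag as delicate---showing $F(A')$ is a single shift of $A$ using $A_1\neq 0$, and choosing the isomorphisms $F(A'(n))\isom A(\pi(n))$ coherently so the resulting $\{\zeta_n\}$ satisfies the twisting identity---are indeed where the substantive work in \cite{Zhang} lies, so your assessment of the difficulties is accurate even if the details are only gestured at.
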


The equivalence is given by $M\mapsto M^{\zeta}$ for any graded $A'$-module $M$ and is the identity on morphisms. We have the following.

\begin{thm}\label{twistEquiv}
Let $A$ be a connected, $\N$-graded locally finite dimensional $k$-algebra. Let $f\in A_d$ a normal, regular homogeneous element of degree $d$ with normalizing automorphism $\sigma$. Let $\zeta=\{\zeta_n\ |\ n\in\Z\}$ be a twisting system such that for all $n\in\Z$, $\zeta_n\sigma\zeta_d=\sigma\zeta_{n+d}$ and $\zeta_n(f)=c^nf$ for some $c\in k^{\times}$. Then the categories $TMF_A(f)$ and $TMF_{A^{\zeta}}(f)$ are equivalent.
\end{thm}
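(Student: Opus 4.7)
The plan is to construct the equivalence by applying the Zhang twist functor $(-)^\zeta$ componentwise. Given a twisted matrix factorization $(\varphi\colon F\to G,\ \tau\colon G^{tw}\to F)$ of $f$ over $A$, I would define $\Phi(\varphi,\tau)=(\varphi^\zeta,\tilde\tau)$, where $\tilde\tau\colon (G^\zeta)^{\hat{tw}}\to F^\zeta$ is the composite $c^{-d}\tau^\zeta\circ\iota_G$, and $\iota_G\colon (G^\zeta)^{\hat{tw}}\to(G^{tw})^\zeta$ is the extension to $G$ of the isomorphism produced in the proof of Lemma \ref{naturality}(2); concretely, $\iota_G$ is the map $g\mapsto c^{\deg g}g$. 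The scalar $c^{-d}$ is needed to account for the degree shift built into $(-)^{\hat{tw}}$, which introduces a compensating factor of $c^d$ relative to $(-)^\zeta\circ(-)^{tw}$. On morphisms, $\Phi$ sends $(\Psi_G,\Psi_F)$ to $(\Psi_G^\zeta,\Psi_F^\zeta)$; functoriality is immediate from the functoriality of $(-)^\zeta$, and the requirement that only the $\varphi$-square commute means no further check is needed on $\tilde\tau$.

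The key verification is that $(\varphi^\zeta,\tilde\tau)$ really is a twisted matrix factorization of $f$ over $A^\zeta$. Using $(\varphi\tau)^\zeta=(\lambda_f^G)^\zeta$, one computes
$$\varphi^\zeta\tilde\tau=c^{-d}(\lambda_f^G)^\zeta\iota_G;$$
evaluating at $g\in(G^\zeta)^{\hat{tw}}$ of degree $n$ gives $c^{-d}\cdot f\cdot c^n g = c^{n-d}fg$, while $\lambda_f^{G^\zeta}(g)=f\ast g=\zeta_{n-d}(f)g=c^{n-d}fg$ by the hypothesis $\zeta_m(f)=c^mf$ (since $g$ has degree $n-d$ in $G^\zeta$). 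A parallel calculation, invoking the naturality of $\iota_G$ in $G$ (that is, the identity $\iota_G\circ(\varphi^\zeta)^{\hat{tw}}=(\varphi^{tw})^\zeta\circ\iota_F$, valid because $\iota$ is degree-wise scalar), yields $\tilde\tau\,(\varphi^\zeta)^{\hat{tw}}=\lambda_f^{F^\zeta}$. This is precisely where the compatibility $\zeta_n\sigma\zeta_d=\sigma\zeta_{n+d}$ enters, via Lemma \ref{naturality}(2), to identify $(-)^{\hat{tw}}\circ(-)^\zeta$ with $(-)^\zeta\circ(-)^{tw}$ up to $\iota$.

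For the inverse, Zhang's theorem supplies a twisting system $\xi$ on $A^\zeta$ with $(A^\zeta)^\xi\cong A$ as graded algebras. One checks that the hypotheses of the theorem descend to $(A^\zeta,f,\hat\sigma,\xi)$, namely $\xi_n(f)=c^{-n}f$ and $\xi_n\hat\sigma\xi_d=\hat\sigma\xi_{n+d}$, and the analogous construction yields $\Psi\colon TMF_{A^\zeta}(f)\to TMF_A(f)$. The natural isomorphisms $\Psi\Phi\cong\mathrm{id}$ and $\Phi\Psi\cong\mathrm{id}$ are inherited from the corresponding natural isomorphisms of the underlying module-category equivalences, together with the fact that the rescalings by $c^{-d}$ and $c^d$ cancel under composition. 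The main obstacle is the careful bookkeeping of the isomorphism $\iota_G$ and the scalar $c^{-d}$: the defining equations of a twisted matrix factorization must hold on the nose, not just up to isomorphism, and as emphasized in Remark \ref{twistIsom}, shifting and Zhang-twisting do not commute strictly, so these correction factors are forced on us by the conventions.
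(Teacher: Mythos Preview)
Your approach is essentially the same as the paper's: you apply the Zhang functor $(-)^\zeta$ componentwise and correct the second map by the isomorphism $\iota_G=\lambda_c$ of Lemma \ref{naturality}(2), exactly as the paper does. The only cosmetic difference is that the paper first lands in $TMF_{A^\zeta}(c^d f)$ and then invokes Proposition \ref{rescale}(1) to pass to $TMF_{A^\zeta}(f)$, whereas you fold the rescaling factor $c^{-d}$ directly into $\tilde\tau$; for the inverse, the paper uses the explicit inverse twisting system $\zeta^{-1}=\{\zeta_n^{-1}\}$ rather than appealing abstractly to Zhang's theorem.
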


\begin{proof}
By Proposition \ref{rescale}, it suffices to prove that $TMF_A(f)$ is equivalent to  $TMF_{A^{\zeta}}(c^df)$.

Let $(\varphi:F\rightarrow G,\tau:G^{tw}\rightarrow F)$ be a twisted left matrix factorization of $f$ over $A$. Let $\lambda_c:(G^{\zeta})^{\hat{tw}}\rightarrow ((G^{\zeta})^{\hat{tw}})^{\lambda_c}$ be the graded isomorphism $m\mapsto c^{\deg m}m$ as in the proof of Lemma \ref{naturality}. By Lemma \ref{naturality}(2) and the note preceding Remark \ref{twistIsom}, $$(\varphi:F^{\zeta}\rightarrow G^{\zeta},\tau\lambda_c:(G^{\zeta})^{\hat{tw}}\rightarrow F^{\zeta})$$ is a twisted matrix factorization of $c^df$ over $A^{\zeta}$. The functoriality of Zhang's category equivalence $(-)^{\zeta}$ ensures any morphism $(\alpha,\beta):(\varphi,\tau)\rightarrow (\varphi',\tau')$ of twisted factorizations over $A$ remains a morphism over $A^{\zeta}$. This defines a functor $TMF_A(f)\rightarrow TMF_{A^{\zeta}}(f)$.

The inverse equivalence is given by applying the inverse twisting system $\zeta^{-1}=\{\zeta_n^{-1}\ |\ n\in\Z\}$ to a twisted matrix factorization over $A^{\zeta}$ and replacing $\lambda_c$ by $\lambda_{c^{-1}}$ in the above construction.
\end{proof}

\begin{cor}
The equivalence given in the preceding theorem completes a commutative diagram of functors
$$\xymatrix{
TMF_A(f) \ar[r] \ar[d]_{\coker} & TMF_{A^\zeta}(f) \ar[d]^{\coker} \\
A\text{-GrMod} \ar[r]^{Z} & A^{\zeta}\text{-GrMod}
}$$
where $Z$ denotes Zhang's equivalence of categories, and $\coker$ sends the
twisted matrix factorization $(\varphi,\tau)$ to $\coker \varphi$.
\end{cor}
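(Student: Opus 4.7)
The plan is to unpack the definitions and observe that both routes around the diagram produce the same graded $A^\zeta$-module, up to a canonical identification. The horizontal equivalence from Theorem \ref{twistEquiv} sends a twisted matrix factorization $(\varphi: F \rightarrow G,\, \tau: G^{tw} \rightarrow F)$ to a factorization whose first component is literally the same $k$-linear map $\varphi$, now regarded as an $A^\zeta$-module morphism $F^\zeta \rightarrow G^\zeta$. The second component is replaced by a scalar-rescaled version of $\tau\lambda_c$, to accommodate the reduction from factorizations of $c^d f$ back to factorizations of $f$ via Proposition \ref{rescale}(1); but this rescaling affects only $\tau$, not $\varphi$.

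Since $\coker$ depends only on the first component of a twisted matrix factorization, it suffices to exhibit a natural isomorphism $\coker(\varphi: F^\zeta \rightarrow G^\zeta) \isom (\coker\varphi)^\zeta = Z(\coker\varphi)$ of graded left $A^\zeta$-modules. This is immediate from the definition of Zhang's twist: $(-)^\zeta$ is the identity on underlying graded $k$-vector spaces and on underlying $k$-linear maps, so the image of $\varphi$ inside $G$ coincides with the image of $\varphi: F^\zeta \rightarrow G^\zeta$ inside $G^\zeta$ as graded subspaces. The quotient therefore has the same underlying graded vector space as $\coker\varphi$, and the $A^\zeta$-action it inherits is by construction the Zhang-twisted $A$-action, which is the defining action on $(\coker\varphi)^\zeta$.

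Naturality in morphisms is equally transparent: a morphism $(\Psi_G, \Psi_F)$ in $TMF_A(f)$ has components that are unchanged as $k$-linear maps under $(-)^\zeta$, so the induced map on cokernels agrees with $Z$ applied to the induced map in $A\text{-GrMod}$. The only point requiring care---and the main, if minor, obstacle---is the bookkeeping around Remark \ref{twistIsom}: the free modules $F^\zeta$ and $G^\zeta$ are not strictly equal to the ``standard'' graded free $A^\zeta$-modules of the same degree shifts, but only isomorphic to them via the components of $\zeta$. Once one checks that this identification is compatible with the cokernel construction and that the $\lambda_c$ rescaling does not interfere with the first coordinate, commutativity of the square (up to natural isomorphism) follows at once.
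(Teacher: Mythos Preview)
Your argument is correct. The paper states this corollary without proof, treating it as immediate from the explicit construction in Theorem \ref{twistEquiv}; your proposal simply unpacks that construction, observing that the equivalence leaves the first component $\varphi$ unchanged as a $k$-linear map and that Zhang's functor $(-)^\zeta$, being the identity on underlying graded vector spaces and morphisms, commutes with cokernels.
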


We do not know an example of a twisting system $\zeta$ where $f$
remains normal and regular in $A^{\zeta}$ but $TMF_A(f)$ and
$TMF_{A^{\zeta}}(f)$ are inequivalent.

In some cases, a normal, regular element can become central in an
appropriate Zhang twist. By Proposition \ref{periodicity}, twisted
matrix factorizations of a central element produce resolutions with
period at most 2. Example \ref{ex:ore} below illustrates the following
important subtlety.

\begin{cor}
The period of a periodic minimal free resolution need not be invariant under a Zhang twist.
\end{cor}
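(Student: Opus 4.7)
The corollary is an existence claim, so my plan is to prove it by constructing (or referencing) an explicit example, which is the content of Example \ref{ex:ore}. The overall strategy is to produce an algebra $A$ with a homogeneous normal regular element $f$ whose normalizing automorphism $\sigma$ has order strictly greater than $1$, together with a twisted matrix factorization $(\varphi,\tau)$ whose associated resolution $\mathbf{\Omega}(\varphi,\tau)$ is minimal and has period strictly greater than $2$; then exhibit a Zhang twist $\zeta$ satisfying the hypotheses of Theorem \ref{twistEquiv} under which $f$ becomes \emph{central} in $A^{\zeta}$, and invoke Proposition \ref{periodicity} to conclude that the corresponding resolution over $A^{\zeta}/(f)$ has period at most $2$. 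Since the categories $TMF_A(f)$ and $TMF_{A^{\zeta}}(f)$ are equivalent by Theorem \ref{twistEquiv} while the associated resolutions carry different periods, the claim follows.

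Concretely, I would look for a two-generator Ore-type extension (this is what the reference to Example \ref{ex:ore} suggests) in which one of the generators is normal but not central, playing the role of $f$; the normalizing automorphism $\sigma$ should be a non-identity scalar automorphism of finite order $n > 1$. To guarantee a resolution of period greater than $2$, I would choose a graded left $B$-module $M$ (with $B = A/(f)$) such that $M^{\bar\sigma} \not\isom M$, whence Proposition \ref{periodicity} forces the period to exceed $2$ (and Proposition \ref{exactness} guarantees it is bounded above by $2n$). Construction \ref{resolution} then produces the desired twisted matrix factorization with the prescribed periodicity behavior.

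For the Zhang twist side, I would exhibit a twisting system $\zeta$ (most naturally of the form $\zeta_n = \phi^n$ for a graded automorphism $\phi$ of $A$) such that (a) $\phi$ commutes with $\sigma$ and scales $f$ by a power of a single scalar $c$, thereby verifying the hypotheses $\zeta_n\sigma\zeta_d = \sigma\zeta_{n+d}$ and $\zeta_n(f) = c^n f$ of Theorem \ref{twistEquiv}, and (b) the new normalizing automorphism $\hat{\sigma}(a) = c^{-\deg a}\sigma\zeta_d(a)$ of $f$ in $A^{\zeta}$ computed in Lemma \ref{naturality} equals the identity, so $f$ is central in $A^{\zeta}$. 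In the Ore-extension setting this typically reduces to solving a simple equation of the form $c^{-1}\sigma\zeta_d = \mathrm{id}$ on generators, which is where the choice of $\phi$ is forced.

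The main obstacle, which is already flagged in the paragraph preceding the corollary, is \emph{verifying} that the resolution over $A$ is genuinely of period greater than $2$ rather than merely bounded by $2n$; in principle, periodicity could collapse accidentally. My check is the one built into Proposition \ref{periodicity}: it is enough to exhibit an $M$ with $M^{\bar\sigma}\not\isom M$ (as graded $B$-modules), for then period $2$ is ruled out. A convenient target is a shifted simple module or a cyclic module presented by a single non-$\bar\sigma$-fixed element of $B$, whose isomorphism class is easily compared to its twist by examining graded Hilbert series or generators. Once such an $M$ is in hand, the two computations — period $>2$ before the Zhang twist, period $\le 2$ after it — together establish the corollary.
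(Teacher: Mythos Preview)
Your overall strategy matches the paper's: Example \ref{ex:ore} takes $A = k[x,y][w;\zeta]$ with $f = w^2$ and normalizing automorphism $\sigma = \zeta^{-2}$, writes down an explicit $2\times 2$ twisted matrix factorization whose associated resolution has period equal to the least $p$ with $\zeta^p(x)\in kx$, and then Zhang-twists by $Z=\{\zeta^n\}$ so that $w^2$ becomes central in $A^Z$, forcing period $\le 2$ there by Proposition \ref{periodicity}.

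However, one concrete step in your plan would fail. You propose that $\sigma$ be a ``non-identity scalar automorphism'', but if $\sigma(a)=c^{\deg a}a$ for some $c\in k^\times$ then $M\isom M^{\bar\sigma}$ for \emph{every} graded $B$-module $M$ via $m\mapsto c^{\deg m}m$; this is precisely the isomorphism $\lambda_c$ appearing in the proof of Lemma \ref{naturality}. Proposition \ref{periodicity} then forces period at most $2$ already over $B$, so no example with period $>2$ can arise from a scalar $\sigma$. The paper's example works exactly because $\zeta$ (and hence $\sigma=\zeta^{-2}$) is \emph{not} of this form: with $\zeta(x)=x+y$, $\zeta(y)=qy$ for $q$ a primitive $n$-th root of unity, one computes $\zeta^p(x)=x+(1+q+\cdots+q^{p-1})y$, which lies in $kx$ only when $n\mid p$, yielding period exactly $n$. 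For the same reason, your fallback test module ``a shifted simple module'' cannot detect period $>2$, since $k(n)^{\bar\sigma}\isom k(n)$ for any degree $0$ automorphism $\bar\sigma$. The paper instead verifies the period by direct inspection of the matrices $[\varphi^{tw^i}]$ and $[\tau^{tw^i}]$, rather than via the module-theoretic criterion of Proposition \ref{periodicity}.
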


\section{Noncommutative hypersurfaces}
\label{section:hypersurfaces}

The bijection between periodic minimal free resolutions and reduced matrix factorizations over local rings hinges on the Auslander-Buchsbaum formula. Before giving a noncommutative version of this correspondence, we recall J\o rgensen's version of Auslander-Buchsbaum for connected graded $k$-algebras. 

Throughout this section, let $A$ be a connected, $\N$-graded, locally
finite dimensional $k$-algebra and additionally assume $A$ is left
noetherian. Let $M$ be a finitely generated graded left
$A$-module. The \emph{depth} of $M$
is $$\depth_A(M)=\inf\{i\ |\ \Ext^i_A(k,M)\neq 0\}$$ Note that
$\depth_A(M)$ is either an integer or $\infty$. We have the following
special case of J\o rgensen's Auslander-Buchsbaum theorem.

\begin{thm}[\cite{Jorg}]
\label{AB}
With $A$ and $M$ as above, if the spaces $\Ext_A^i(k,A)$ are finite dimensional $k$-vector spaces for all $0\le i\le \depth_A(A)$ and if $\pd_A(M)<\infty$, then the Auslander-Buchsbaum formula
$$\pd_A(M)+\depth_A(M)=\depth_A(A)$$
holds for $M$. 
\end{thm}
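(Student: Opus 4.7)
The plan is to induct on $n := \pd_A(M)$ and to use the long exact sequence of $\Ext^{\ast}_A(k,-)$ applied to a short exact sequence coming from a minimal graded free cover. Throughout, write $d := \depth_A(A)$.

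For the base case $n = 0$, the module $M$ is a finite direct sum $\bigoplus_j A(s_j)$, so $\Ext^i_A(k,M) \cong \bigoplus_j \Ext^i_A(k,A)(s_j)$. This first becomes nonzero at $i = d$, giving $\depth_A(M) = d$ and hence the formula.

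For the inductive step, suppose $n \geq 1$. Choose a minimal graded free cover $F \twoheadrightarrow M$ with kernel $K$, so that $K \subseteq A_+ F$, $\pd_A(K) = n - 1$, and by induction $\depth_A(K) = d - n + 1$. The long exact sequence
$$\cdots \to \Ext^i_A(k,F) \to \Ext^i_A(k,M) \to \Ext^{i+1}_A(k,K) \to \Ext^{i+1}_A(k,F) \to \cdots$$
combined with vanishing of $\Ext^i_A(k,F)$ for $i < d$ and of $\Ext^{i+1}_A(k,K)$ for $i + 1 < d - n + 1$ forces $\Ext^i_A(k,M) = 0$ for $i < d - n$. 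Thus $\depth_A(M) \geq d - n$, and the connecting map identifies $\Ext^{d-n}_A(k,M)$ with the kernel of the induced map $\Ext^{d-n+1}_A(k,K) \to \Ext^{d-n+1}_A(k,F)$.

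When $n \geq 2$ we have $d - n + 1 < d$, so the target $\Ext^{d-n+1}_A(k,F)$ vanishes and the kernel is all of $\Ext^{d-n+1}_A(k,K)$, which is nonzero by definition of $\depth_A(K)$. The main obstacle is the sub-case $n = 1$, where $K$ is free with $\depth_A(K) = d$ and we need the induced map $\Ext^d_A(k,K) \to \Ext^d_A(k,F)$ to have nontrivial kernel. Here the key point is that, by minimality, the inclusion $K \hookrightarrow F$ is represented by a matrix of elements of $A_+$, and the induced map on $\Ext^d_A(k,-)$ acts by right multiplication by these positive-degree elements on the graded right $A$-module $\Ext^d_A(k,A)$. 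The finite-dimensionality hypothesis ensures $\Ext^d_A(k,A)$ has a top internal degree; any nonzero element placed at that top degree in a single summand of $\Ext^d_A(k,K)$ must map to zero, since right multiplication by $A_+$ strictly raises internal degree past the top of $\Ext^d_A(k,A)$. This produces the required nonzero kernel and completes the induction.
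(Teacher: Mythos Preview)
The paper does not supply its own proof of this statement; it is quoted as a special case of J\o rgensen's theorem and cited to \cite{Jorg}. So there is nothing in the paper to compare your argument against directly.

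That said, your argument is correct and is essentially the classical proof of the Auslander--Buchsbaum formula transported to the connected graded setting. The base case and the $n\ge 2$ step are routine. The genuinely nontrivial point is the $n=1$ case, and you handle it properly: minimality of the cover forces the matrix of $K\hookrightarrow F$ to lie in $A_+$, and the finite-dimensionality hypothesis on $\Ext^d_A(k,A)$ guarantees a top internal degree that is annihilated by right multiplication by $A_+$. This is exactly where the hypothesis on $\Ext^i_A(k,A)$ does its work, and you have identified it.

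One small omission: your $n=1$ argument tacitly assumes $d=\depth_A(A)<\infty$, since you need $\Ext^d_A(k,A)\neq 0$ to produce a top-degree element. When $d=\infty$ the formula is interpreted as $\depth_A(M)=\infty$, and this follows immediately by the same induction (both $\Ext^i_A(k,F)$ and $\Ext^{i+1}_A(k,K)$ vanish for all $i$). You should add a sentence dispensing with this case. Otherwise the proof is complete.
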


Theorem \ref{AB} says the Auslander-Buchsbaum formula holds for all finitely generated graded left modules over left noetherian Artin-Schelter regular algebras. 

\begin{defn}
Let $A$ be a connected, $\N$-graded, locally finite-dimensional $k$-algebra. Then $A$ is \emph{Artin-Schelter regular} (resp.\ \emph{Artin-Schelter Gorenstein}) of dimension $d$ if
\begin{enumerate}
\item ${\rm gl.dim}(A)=d<\infty$ (resp.\ ${\rm inj.dim}(A)=d<\infty$ on both sides)
\item ${\rm GKdim}(A)=d$
\item $\Ext_A^i(k,A)=\delta_{i,d}k$
\end{enumerate}
\end{defn}

We frequently abbreviate these conditions AS-regular and
AS-Gorenstein. We also note that the results below in which $A$ is a
left noetherian AS-regular or AS-Gorenstein algebra do not require an
explicit assumption that the Gelfand-Kirillov dimension is finite.

The following fact is a consequence of the long exact sequence in
cohomology. We omit the straightforward proof.

\begin{lemma}
\label{SES}
Let $0\rightarrow M'\rightarrow M\rightarrow M''\rightarrow 0$ be a short exact sequence of graded left $A$ modules. If $\depth_A(M)>\depth_A(M'')$ then $\depth_A(M')=\depth_A(M'')+1$. 
\end{lemma}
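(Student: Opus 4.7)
The plan is to apply the covariant functor $\Ext_A^{*}(k,-)$ to the short exact sequence and track vanishing along the resulting long exact sequence
$$\cdots \to \Ext^{i-1}_A(k,M'') \to \Ext^i_A(k,M') \to \Ext^i_A(k,M) \to \Ext^i_A(k,M'') \to \Ext^{i+1}_A(k,M') \to \cdots$$
Set $n=\depth_A(M'')$, so by hypothesis $\depth_A(M) \geq n+1$. The target is to show $\Ext^i_A(k,M')$ vanishes for $i \leq n$ and is nonzero for $i=n+1$.

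First I would verify the vanishing in low degrees. For any $i \leq n$, both $\Ext^{i-1}_A(k,M'')=0$ (since $i-1<n$) and $\Ext^i_A(k,M)=0$ (since $i \leq n < \depth_A(M)$). The corresponding three-term stretch of the long exact sequence then forces $\Ext^i_A(k,M')=0$, which already gives $\depth_A(M') \geq n+1$.

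To upgrade this to equality, I would examine the segment
$$\Ext^n_A(k,M) \to \Ext^n_A(k,M'') \to \Ext^{n+1}_A(k,M').$$
The leftmost term vanishes because $n<\depth_A(M)$, so the connecting map is injective. Since $\Ext^n_A(k,M'') \neq 0$ by the definition of $n$, one concludes $\Ext^{n+1}_A(k,M') \neq 0$, giving $\depth_A(M')=n+1$ as required.

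There is no real obstacle; the argument is entirely formal manipulation of the long exact sequence. The only place a reader might hesitate is when $\depth_A(M)=\infty$, but then $\Ext^i_A(k,M)=0$ for every $i$ and the long exact sequence yields $\Ext^i_A(k,M') \cong \Ext^{i-1}_A(k,M'')$ for all $i$, so the same conclusion holds with no modification.
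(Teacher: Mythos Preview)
Your argument is correct and matches the paper's approach exactly: the paper states that the lemma ``is a consequence of the long exact sequence in cohomology'' and omits the proof, and your proposal supplies precisely those omitted details via the long exact sequence for $\Ext_A^{*}(k,-)$.
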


Next, we need several facts relating the depth of $A$ to the depth of $A/(f)$ when $f$ is a normal, regular element. 
This result is classically known as Rees' Lemma; we include a proof here for completeness.

\begin{lemma}
\label{depthFacts}
Let $A$ be a connected $\N$-graded $k$-algebra. Let $f\in A$ be a normal, regular homogeneous element and let $B=A/(f)$. Then 
\begin{enumerate}
\item $\depth_B(B)=\depth_A(A)-1$
\item $\depth_A(M)=\depth_B(M)$ for any finitely generated, graded left $B$-module $M$. 
\item If $A$ is AS-Gorenstein, so is $B$.
\end{enumerate}
\end{lemma}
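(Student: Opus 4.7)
I plan to prove the three parts in the order (2), (1), (3). For (2), my approach is the Grothendieck change-of-rings spectral sequence arising from the composition $\Hom_A(k,-) = \Hom_B(k,-)\circ\Hom_A(B,-)$, namely
$$E_2^{p,q} = \Ext^p_B(k,\Ext^q_A(B,M)) \Rightarrow \Ext^{p+q}_A(k,M).$$
Since $0\to A^{tw}\to A\to B\to 0$ is a graded free $A$-resolution of $B$ of length one, $\Ext^q_A(B,M)=0$ for $q\ge 2$. Applying $\Hom_A(-,M)$ to this resolution and using $fM=0$ to see that the connecting map $\Hom_A(A,M)\to\Hom_A(A^{tw},M)$ is zero, one computes $\Ext^0_A(B,M)\cong M$ and $\Ext^1_A(B,M)\cong M^{tw^{-1}}$. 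With only two nonzero rows, the spectral sequence collapses to the long exact sequence
$$\cdots\to \Ext^{p-2}_B(k,M^{tw^{-1}})\to\Ext^p_B(k,M)\to\Ext^p_A(k,M)\to\Ext^{p-1}_B(k,M^{tw^{-1}})\to\Ext^{p+1}_B(k,M)\to\cdots$$
Since $(-)^{tw^{-1}}$ is an autoequivalence of $B$-GrMod fixing $k$, $\depth_B(M^{tw^{-1}})=\depth_B(M)=:n$. The vanishing of $\Ext^p_B(k,M)$ and $\Ext^p_B(k,M^{tw^{-1}})$ for $p<n$ forces $\Ext^p_A(k,M)=0$ for $p<n$, and at $p=n$ one reads off $\Ext^n_B(k,M)\cong\Ext^n_A(k,M)\ne 0$. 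Hence $\depth_A(M)=n$.

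Part (1) is then immediate: $0\to A^{tw}\to A\to B\to 0$ is a minimal graded free $A$-resolution, so $\pd_A(B)=1$; Theorem \ref{AB} applied to $B$ yields $\depth_A(B)=\depth_A(A)-1$, and (2) applied to $M=B$ gives $\depth_B(B)=\depth_A(A)-1$.

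For part (3), assume $A$ is AS-Gorenstein of dimension $d$. The injective dimension bound $\text{inj.dim}_B(B)\le d-1$ follows from taking a minimal graded injective $A$-resolution $A\to I^\bullet$ of length $d$ and applying $\Hom_A(B,-)$: the resulting complex consists of graded injective $B$-modules (via the standard fact that $\Hom_A(B,I)$ is $B$-injective when $I$ is $A$-injective), and its cohomology computes $\Ext^*_A(B,A)$, which---from $0\to A^{tw}\to A\to B\to 0$ and the Gorenstein property of $A$---is concentrated in degree one at a twist of $B$. A shift yields an injective $B$-resolution of length at most $d-1$; an autoequivalence and the symmetric argument on the right then give $\text{inj.dim}_B(B)\le d-1$. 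The identity $\GKdim(B)=d-1$ is the standard one for quotients by a homogeneous normal regular element. Finally, $\depth_B(B)=d-1$ from part (1) combined with the injective-dimension bound confines $\Ext^i_B(k,B)$ to be possibly nonzero only at $i=d-1$. To pin down $\dim_k\Ext^{d-1}_B(k,B)$, I apply the LES from (2) with $M=B$ at $p=d-1$: since $\Ext^j_B(k,B^{tw^{-1}})=0$ for $j\ne d-1$, this gives $\Ext^{d-1}_B(k,B)\cong\Ext^{d-1}_A(k,B)$. Meanwhile, the LES associated to $0\to A^{tw}\to A\to B\to 0$ applied to $\Ext^*_A(k,-)$, combined with $\Ext^i_A(k,A)=\delta_{i,d}k=\Ext^i_A(k,A^{tw})$, shows $\Ext^{d-1}_A(k,B)$ embeds into $\Ext^d_A(k,A^{tw})\cong k$. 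Hence $\Ext^{d-1}_B(k,B)\cong k$.

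The main obstacle lies in part (2): correctly identifying $\Ext^1_A(B,M)$ with $M^{tw^{-1}}$ as a $B$-module---that is, tracking the twist that arises from $\Hom_A(A^{tw},-)$---so that the two-row spectral sequence collapses into a long exact sequence whose terms interact cleanly with the depth arguments. Once this twist bookkeeping is in place, (1) follows by one invocation of Theorem \ref{AB}, and (3) by a combination of the change-of-rings LES, a dual injective-resolution argument, and the classical GK-dimension identity.
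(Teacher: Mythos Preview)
Your argument for part (2) is essentially the paper's: both run the change-of-rings spectral sequence $\Ext^p_B(k,\Ext^q_A(B,-))\Rightarrow\Ext^{p+q}_A(k,-)$ with the $B$-module $M$ in the last slot, collapse the two nonzero rows to a long exact sequence, and read off the depth equality. Your bookkeeping of the twist on $\Ext^1_A(B,M)$ is slightly more explicit than the paper's (which simply writes $N/fN$), but the content is the same.

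The genuine problem is in your proof of part (1). You deduce $\depth_A(B)=\depth_A(A)-1$ by invoking Theorem~\ref{AB} (Auslander--Buchsbaum) with $M=B$. But Theorem~\ref{AB} carries the hypothesis that $\Ext_A^i(k,A)$ be finite-dimensional for all $i\le\depth_A(A)$, and Lemma~\ref{depthFacts} is stated for an arbitrary connected $\N$-graded $k$-algebra---even under the section's standing assumptions (locally finite, left noetherian) this finiteness is an \emph{extra} hypothesis, not an automatic consequence. So your route to (1) does not go through at the stated level of generality. The paper avoids this entirely: it runs the \emph{same} spectral sequence but with $N=A$ (an $A$-module, not a $B$-module) in the third variable. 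Then $N^f=0$ since $f$ is regular, so the $q=0$ row vanishes and one reads off the isomorphism $\Ext_A^n(k,A)\cong\Ext_B^{n-1}(k,B)$ directly, giving (1) without any appeal to Auslander--Buchsbaum. This isomorphism is also exactly what drives the Gorenstein condition in (3).

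Your treatment of (3) is more hands-on than the paper's, which simply cites Levasseur for the injective and GK dimension drops and then uses the isomorphism $\Ext_A^n(k,A)\cong\Ext_B^{n-1}(k,B)$ from its proof of (1) to get $\Ext_B^i(k,B)=\delta_{i,d-1}k$ in one line. Your version---splitting an acyclic-except-in-one-degree complex of injectives, and chasing two long exact sequences to pin down $\dim_k\Ext^{d-1}_B(k,B)$---is correct but circuitous; once you have the paper's direct isomorphism from (1), all of that becomes unnecessary.
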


\begin{proof}
Let $N$ be a finitely generated, graded left $A$-module and consider the Cartan-Eilenberg change-of-rings spectral sequence
$$\Ext^p_B(k,\Ext^q_A(B,N))\Rightarrow \Ext^{p+q}_A(k,N)$$
Since $f\in A$ is regular, $\pd_A B=1$, hence the spectral sequence has only two nonzero rows. 
$$E_2^{p,0}=\Ext_B^p(k,\Hom_A(B,N))=\Ext_B^p(k,N^f)$$
$$E_2^{p,1}=\Ext_B^p(k,\Ext_A^1(B,N))=\Ext_B^p(k,N/fN)$$
where $N^f=\{n\in N\ |\ fn=0\}$.
The associated long exact sequence is (\cite{CE} Theorem XV.5.11)
$$\rightarrow E_2^{n,0}\rightarrow \Ext_A^n(k,N)\rightarrow E_2^{n-1,1}\rightarrow E_2^{n+1,0}\rightarrow \Ext_A^{n+1}(k,N)\rightarrow E_2^{n,1}\rightarrow$$
To prove the first statement, set $N=A$. Since $f$ is regular, $N^f=0$ and we have 
\begin{equation}
\Ext_A^n(k,A)\cong \Ext_B^{n-1}(k,B)
\end{equation}
The formula $\depth_B(B)=\depth_A(A)-1$ follows.

For the second statement, let $M$ be a finitely generated, graded left $B$-module. Let $N=M$ viewed as an $A$-module via the natural quotient map. Then $N^f=M$ and $fN=0$. For $i< d=\depth_B(M)$
$$ \Ext_B^{i}(k,M)\rightarrow \Ext_A^{i}(k,M)\rightarrow \Ext_B^{i-1}(k,M)$$
is exact, hence $\Ext_A^i(k,M)=0$, and 
$$\Ext_B^{d-2}(k,M)\rightarrow\Ext_B^{d}(k,M)\rightarrow \Ext_A^{d}(k,M)\rightarrow \Ext_B^{d-1}(k,M)$$
is exact, so $\Ext_A^{d}(k,M)\neq 0$. This shows $\depth_A(M)=\depth_B(M)$.

For the last statement, assume $A$ is AS-Gorenstein of dimension $\mu={\rm inj.dim}(A)$. Then ${\rm inj.dim(B)}=\mu-1$ and ${\rm GKdim}(B)={\rm GKdim}(A)-1$ (\cite{Lev} Theorem 3.6, Lemma 5.7).
Finally, since $\Ext_A^i(k,A)=\delta_{i,\mu}k$, equation (1) gives $\Ext_B^i(k,B)=\delta_{i,\mu-1}k$. Thus $B$ is AS-Gorenstein of dimension $\mu-1$.
\end{proof}

Theorem \ref{AB} and Lemma \ref{depthFacts} imply a graded $B$-module $M$ with $\pd_A(M)=1$ satisfies ${\rm depth}_B(M)={\rm depth}_B(B)$. By analogy with the commutative case, it is tempting to call such a module ``maximal Cohen-Macaulay.'' In his notes \cite{Buch}, Buchweitz defined the notion of a maximal Cohen-Macaulay module over any ring which is both left and right noetherian and has finite left and right injective dimension. We adopt a graded version of Buchweitz's definition for left noetherian rings. A finitely generated graded module $M$ over a connected, $\N$-graded, locally finite dimensional left noetherian $k$-algebra $B$ of finite left and right injective dimension is called \emph{maximal Cohen-Macaulay} if and only if $\Ext_B^i(M,B)=0$ for $i\neq 0$. The next Lemma shows that defining maximal Cohen-Macaulay modules in terms of depth is equivalent in our case.

\begin{lemma}
\label{MCM}
Let $A$ be a left noetherian, AS-regular algebra. Let $f\in A_+$ be a homogeneous normal, regular element and let $B=A/(f)$. Then for any finitely generated graded left $B$-module $M$, $\pd_A(M)=1$ if and only if $\Ext_B^i(M,B)=0$ for all $i\neq 0$.
\end{lemma}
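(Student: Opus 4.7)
The plan is to reduce both implications to J\o rgensen's Auslander-Buchsbaum formula (Theorem \ref{AB}) together with the standard Gorenstein characterization of maximal Cohen-Macaulay modules.

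First I would set up the basic machinery. Since $A$ is AS-regular of dimension $d$, the defining condition $\Ext^i_A(k,A)=\delta_{i,d}k$ gives $\depth_A(A)=d$. By Lemma \ref{depthFacts}(1) and (3), $B$ is AS-Gorenstein with $\depth_B(B)=d-1={\rm inj.dim}_B(B)$. For any nonzero finitely generated graded left $B$-module $M$, observe that $\pd_A(M)$ is finite (since ${\rm gl.dim}(A)=d$) and at least $1$: over the connected graded algebra $A$ a nonzero finitely generated graded projective module is free, and a nonzero free $A$-module cannot be annihilated by the regular element $f$.

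Next, I would apply Theorem \ref{AB} to conclude $\pd_A(M)+\depth_A(M)=d$, so $\pd_A(M)=1$ if and only if $\depth_A(M)=d-1$. Lemma \ref{depthFacts}(2) then gives $\depth_A(M)=\depth_B(M)$, so the condition $\pd_A(M)=1$ is equivalent to $\depth_B(M)=d-1=\depth_B(B)$. Thus the stated equivalence reduces to showing that over the AS-Gorenstein algebra $B$ of dimension $n=d-1$, a finitely generated graded left $B$-module $M$ satisfies $\depth_B(M)=n$ if and only if $\Ext^i_B(M,B)=0$ for all $i\ne 0$.

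The hard part will be establishing this Gorenstein duality in the noncommutative graded setting. One approach is to invoke local duality for AS-Gorenstein algebras (as developed by Van den Bergh and Yekutieli): the graded local cohomology $H^i_{\mathfrak{m}}(M)$ is Matlis dual, up to a Nakayama twist, to $\Ext^{n-i}_B(M,B)$, and since $\depth_B(M)=\min\{i\mid H^i_{\mathfrak{m}}(M)\ne 0\}$, the vanishing of $\Ext^j_B(M,B)$ for $j>0$ is equivalent to $\depth_B(M)=n$. An alternative, more self-contained route exploits the Cartan-Eilenberg change-of-rings spectral sequence $\Ext^p_B(M,\Ext^q_A(B,A))\Rightarrow\Ext^{p+q}_A(M,A)$, which collapses because $\pd_A(B)=1$ and $\Hom_A(B,A)=0$; combined with a direct identification of $\Ext^1_A(B,A)$ as a twist of $B$ and the fact that over an AS-regular algebra the projective dimension of $M$ equals the largest degree in which $\Ext^*_A(M,A)$ is nonzero, this yields both implications.
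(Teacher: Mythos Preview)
Your proposal is correct, and in fact your ``alternative, more self-contained route'' via the change-of-rings spectral sequence is exactly what the paper does. The paper's argument is simply: $\pd_A(M)=1$ is equivalent to $\Ext^i_A(M,A)=0$ for all $i>1$ (the nontrivial direction being J\o rgensen's Ext-vanishing theorem \cite{Jorg2}), and then the collapsed spectral sequence $\Ext^p_B(M,\Ext^q_A(B,A))\Rightarrow\Ext^{p+q}_A(M,A)$, together with $\Ext^1_A(B,A)\cong B(d)$ and $\Hom_A(B,A)=0$, gives $\Ext^i_B(M,B)\cong\Ext^{i+1}_A(M,A)$ for all $i\ge 0$.

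The difference is that the paper applies the spectral sequence directly, without your preliminary reduction through Auslander--Buchsbaum and Lemma~\ref{depthFacts} to a depth condition over $B$. That detour is logically fine but unnecessary: once you have the spectral sequence and J\o rgensen's Ext-vanishing, the equivalence follows in two lines without mentioning depth at all. Your primary route via local duality for AS-Gorenstein algebras also works, but it imports substantially heavier machinery (balanced dualizing complexes, the $\chi$ condition) for a statement that the spectral sequence handles elementarily.
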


\begin{proof}
We have $\pd_A(M)=1$ if and only if $\Ext_A^i(M,A)=0$ for all $i>1$. One direction of this is clear. The other is J\o rgensen's Ext-vanishing theorem \cite{Jorg2}. Let $d=\deg f$. Since $0\rightarrow A(-d)\xrightarrow{f}A\rightarrow B\rightarrow 0$ is a minimal graded free resolution of $_AB$, we see that $\Ext_A(B,A)$ is concentrated in homological degree 1 and $\Ext^1_A(B,A)\isom B(d)$ as graded left $B$-modules. Then the change of rings spectral sequence
$$\Ext_B^p(M,\Ext^q_A(B,A))\Rightarrow \Ext_A^{p+q}(M,A)$$
shows $\Ext_B^i(M,B)=0$ for $i\neq 0$ if and only if $\Ext_A^i(M,A)=0$ for all $i>1$.
\end{proof}

If $\cdots \rightarrow P_2\rightarrow P_1\rightarrow P_0\rightarrow M\rightarrow 0$ is an exact sequence of $B$-modules, we denote the $j$-th syzygy module $\im(P_j\rightarrow P_{j-1})$ by $\Omega_j(M)$ where $\Omega_0(M)=M$.

\begin{prop}
\label{syzygy}
Let $A$ be a left noetherian Artin-Schelter regular algebra of dimension $d$. Let $f\in A_+$ be a normal, regular homogeneous element and let $B=A/(f)$. Let $M$ be a finitely generated, graded left $B$-module and $\mathbf{P}$ a graded projective $B$-module resolution of $M$. Then $\pd_A(\Omega_i(M))=1$ for some $0\le i\le d$. 
\end{prop}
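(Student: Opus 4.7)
The plan is to translate the claim into a statement about depth and then induct through the syzygies. Since $A$ is AS-regular of dimension $d$, we have $\text{gl.dim}(A) = d$, so $\pd_A(\Omega_i(M)) \leq d$ for every $i$. J\o rgensen's Auslander-Buchsbaum theorem (Theorem \ref{AB}) then applies to give $\pd_A(\Omega_i(M)) + \depth_A(\Omega_i(M)) = \depth_A(A) = d$. Moreover, no nonzero $B$-module $N$ can be $A$-projective: being finitely generated and graded, such an $N$ would be graded free over $A$, yet $fN = 0$ while $f$ is regular on any nonzero free $A$-module. Hence $\pd_A(\Omega_i(M)) \geq 1$ whenever $\Omega_i(M) \neq 0$, and in particular (assuming $M \neq 0$, else the claim is vacuous) we have $1 \leq \pd_A(M) \leq d$.

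Next I would use Lemma \ref{depthFacts} to replace $\depth_A$ by $\depth_B$ on $B$-modules and to record $\depth_B(B) = d - 1$. The target is therefore reduced to finding $0 \leq i \leq d$ with $\depth_B(\Omega_i(M)) = d - 1$, equivalently $\pd_A(\Omega_i(M)) = 1$. For this, consider the short exact sequence $0 \to \Omega_{j+1}(M) \to P_j \to \Omega_j(M) \to 0$ extracted from $\mathbf{P}$. The module $P_j$ is graded free over $B$, so $\depth_A(P_j) = \depth_B(P_j) = d - 1$. Whenever $\pd_A(\Omega_j(M)) > 1$, that is, $\depth_A(\Omega_j(M)) < d - 1 = \depth_A(P_j)$, Lemma \ref{SES} yields $\depth_A(\Omega_{j+1}(M)) = \depth_A(\Omega_j(M)) + 1$, which is equivalent to $\pd_A(\Omega_{j+1}(M)) = \pd_A(\Omega_j(M)) - 1$. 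Note also that such a step produces a nonzero $\Omega_{j+1}(M)$: if $\Omega_{j+1}(M) = 0$, then $\Omega_j(M)$ would be $B$-projective, forcing $\pd_A(\Omega_j(M)) = \pd_A(B) = 1$, contrary to $\pd_A(\Omega_j(M)) > 1$.

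Iterating this step starting from $j = 0$, either $\pd_A(M)$ already equals $1$ (so $i = 0$ works), or the projective dimension strictly decreases at each syzygy until it reaches $1$. Since $\pd_A(M) \leq d$, we reach $\pd_A(\Omega_i(M)) = 1$ no later than $i = \pd_A(M) - 1 \leq d - 1$, so the required $i \leq d$ exists. The argument is conceptually straightforward once Auslander-Buchsbaum and the change-of-rings depth identities are in hand; the only points needing mild care are that $\depth_A(P_j) = d - 1$ for the $B$-free modules $P_j$ and that the intermediate syzygies remain nonzero, both of which are handled by the observations above.
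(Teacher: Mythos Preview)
Your proof is correct and follows essentially the same approach as the paper: both apply J\o rgensen's Auslander--Buchsbaum formula together with Lemma~\ref{depthFacts} to reduce the claim to a depth statement, then use Lemma~\ref{SES} iteratively on the short exact sequences $0\to\Omega_{j+1}(M)\to P_j\to\Omega_j(M)\to 0$ to increase depth (equivalently, decrease $\pd_A$) one step at a time until reaching $\pd_A=1$. The only cosmetic difference is that you track $\pd_A$ directly while the paper tracks $\depth_B$, and you add the observation that the intermediate syzygies remain nonzero---a small point the paper leaves implicit.
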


\begin{rmk*}
Recalling the construction of twisted matrix factorizations in Section
\ref{section:MF}, the Proposition shows their ubiquity over noetherian
AS-regular algebras in the presence of a normal regular element.
\end{rmk*}

\begin{proof} 
First, we show $\depth_B(B)\ge \depth_B(M)$. Indeed, by Theorem \ref{AB} and Lemma \ref{depthFacts} we have
\begin{align*}
\depth_B(M)=\depth_A(M)&=\depth_A(A)-\pd_A(M)\\
&=\depth_B(B)+1-\pd_A(M)
\end{align*}
Since $fM=0$, we have $\pd_A(M)>0$, hence $\depth_B(B)\ge \depth_B(M)$.
If $\depth_B(M)=\depth_B(B)$, the equation above shows $\pd_A(M)=1$, so assume $\depth_B(M)=i<\depth_B(B)$. Graded projective modules are graded free so $\depth_B(P_j)=\depth_B(B)$ for all $j\ge 0$. Since $$0\rightarrow \Omega_1(M)\rightarrow P_0\rightarrow M\rightarrow 0$$ is exact, $\depth_B(\Omega_1(M))=i+1$ by Lemma \ref{SES}. Inductively applying Lemma \ref{SES} to the exact sequence
$$0\rightarrow \Omega_{j+1}(M)\rightarrow P_j\rightarrow \Omega_{j}(M)\rightarrow 0$$
we obtain $\depth_B(\Omega_{d-i}(M))=\depth_B(B)$. It follows that $$\pd_A(\Omega_{d-i}(M))=1$$ as desired.
\end{proof}

We are ready to prove our main theorem. 
For ease of notation, we will no longer specify the degree of the
normal, regular homogeneous element $f$ and instead reserve $d$ for
the dimension of the ambient AS-regular algebra.  The functor
$(-)^{tw}$ continues to denote the composition of $(-)^{\sigma}$ with
an appropriate degree shift.

\begin{thm}
\label{mainThm}
Let $A$ be a left noetherian Artin-Schelter regular algebra of
dimension $d$. Let $f\in A_+$ be a homogeneous normal regular element
and let $\sigma$ be its normalizing automorphism. Let $B=A/(f)$. If
$$\mathbf{Q}: \cdots \rightarrow Q_2 \rightarrow Q_1\rightarrow Q_0$$
is a minimal graded free left $B$-module resolution of a finitely generated graded left $B$-module $M$, then 
\begin{enumerate} 
\item The truncated complex $\cdots \rightarrow Q_{d+2}\rightarrow
  Q_{d+1}$ is chain isomorphic to $\mathbf{\Omega}(\varphi,\tau)$ for
  some reduced twisted left matrix factorization $(\varphi,\tau)$.
\end{enumerate}
Assuming further that $|\sigma|<\infty$, we have
\begin{enumerate}
\item[(2)] $\mathbf{Q}$ becomes periodic of period at most $2|\sigma|$ after $d+1$ steps. 
\item[(3)] $\mathbf{Q}$ is periodic (of period at most $2|\sigma|$) if
  and only if $\pd_A(M)=1$ and $M$ has no graded free $B$-module summand.
\item[(4)] Every periodic minimal graded free left module resolution over $B$
  has the form $\mathbf{\Omega}(\varphi,\tau)$ for some reduced
  twisted left matrix factorization $(\varphi,\tau)$ of $f$ over $A$.
\end{enumerate}
\end{thm}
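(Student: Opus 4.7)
My plan is to reduce the theorem to Proposition \ref{syzygy}, Construction \ref{resolution}, and Proposition \ref{minRes}, together with the uniqueness of minimal graded free resolutions over connected graded algebras and the Lemma identifying reducedness with minimality of $\mathbf{\Omega}(\varphi,\tau)$.

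For part (1), I would first iterate Proposition \ref{syzygy} to show $\pd_A(\Omega_{d+1}(M))=1$. Proposition \ref{syzygy} supplies some $i_0 \le d$ with $\pd_A(\Omega_{i_0}(M))=1$; since $f$ is regular we have $\pd_A(B)=1$, and the short exact sequences $0\to \Omega_{j+1}(M)\to Q_j\to \Omega_j(M)\to 0$ propagate the bound $\pd_A(\Omega_j(M))\le 1$. Because any nonzero syzygy is a $B$-module annihilated by $f$ and so cannot be $A$-free, this bound is actually an equality for all $j\ge i_0$, in particular for $j=d+1$. Now let $N=\Omega_{d+1}(M)$ and apply Construction \ref{resolution} to $N$ to obtain a twisted matrix factorization $(\varphi,\tau)$ whose associated complex $\mathbf{\Omega}(\varphi,\tau)$ is a graded free $B$-resolution of $N$. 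The truncated complex $\cdots\to Q_{d+2}\to Q_{d+1}$ is also a graded free $B$-resolution of $N$, and it inherits minimality from $\mathbf{Q}$. By the uniqueness of minimal graded free resolutions, this truncation is a direct summand of $\mathbf{\Omega}(\varphi,\tau)$, with complement a direct sum of trivial complexes of the form $\overline{A}\xrightarrow{1}\overline{A}$. I would then check that these trivial complexes correspond precisely to trivial TMF summands $(\lambda_f^A,1)$ or $(1,\lambda_f^A)$ in the sense of Definition \ref{defn:redMF}; splitting them off yields a reduced TMF $(\varphi',\tau')$ with $\mathbf{\Omega}(\varphi',\tau')$ chain isomorphic to the truncated minimal complex.

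Part (2) is then immediate: the observation preceding Proposition \ref{periodicity} shows that when $|\sigma|<\infty$ the complex $\mathbf{\Omega}(\varphi',\tau')$ is periodic of period at most $2|\sigma|$, so $\mathbf{Q}$ is periodic of period at most $2|\sigma|$ after $d+1$ steps. For the backward direction of (3) (and for (4)), if $\pd_A(M)=1$ and $M$ has no $B$-free summand, Construction \ref{resolution} applied directly to $M$ combined with Proposition \ref{minRes} shows $\mathbf{Q}\cong \mathbf{\Omega}(\varphi,\tau)$ for a reduced TMF, and periodicity of period at most $2|\sigma|$ follows as in part (2). For the forward direction of (3), assume $\mathbf{Q}$ is periodic. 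Combining part (1) with Lemma \ref{resFacts}(1) forces $\rank Q_j=\rank Q_0$ for all $j\ge 0$; a $B$-free summand of $M$ would contribute to $Q_0$ but not to any $Q_j$ with $j\ge 1$, contradicting this constant-rank conclusion, so $M$ has no $B$-free summand. Periodicity also provides an isomorphism $M\cong \Omega_p(M)(N)$ for appropriate $p,N$, and taking $p\ge d+1$ gives $\pd_A(M)=\pd_A(\Omega_p(M))=1$ by part (1) together with invariance of $\pd_A$ under shifts.

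The main technical obstacle I anticipate is the decomposition step in part (1): verifying that any non-minimal summands of $\mathbf{\Omega}(\varphi,\tau)$ arising from Construction \ref{resolution} genuinely correspond to trivial TMF summands, so that one can canonically split them off within $TMF_A(f)$ rather than merely at the level of complexes. Concretely, this amounts to showing that if $\mathbf{\Omega}(\varphi,\tau)$ has a free acyclic summand, then $(\varphi,\tau)$ splits as $(\varphi_0,\tau_0)\oplus(\text{trivial})$ in $TMF_A(f)$, which will use Proposition \ref{morphisms} and the injectivity of $\varphi$ and $\tau$.
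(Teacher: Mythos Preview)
Your outline is essentially correct and follows the paper's strategy; the one substantive difference is in how you handle a possible free $B$-summand of the syzygy, and this is exactly the point you flag as an obstacle. The paper avoids that obstacle entirely by splitting at the \emph{module} level rather than inside $TMF_A(f)$: at the index $i\le d$ supplied by Proposition~\ref{syzygy} (no propagation needed), write $\Omega_i(M)=\Omega_i'(M)\oplus F$ with $F$ graded $B$-free and $\Omega_i'(M)$ free-summand-free, so $\pd_A\Omega_i'(M)=1$. Applying Construction~\ref{resolution} to $\Omega_i'(M)$ already yields a reduced $(\varphi,\tau)$ with $\mathbf{\Omega}(\varphi,\tau)$ minimal by Proposition~\ref{minRes}. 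Splicing $\mathbf{\Omega}(\varphi,\tau)\oplus F[i]$ onto $Q_{i-1}\to\cdots\to Q_0$ gives a minimal resolution isomorphic to $\mathbf{Q}$; truncating beyond homological degree $i$ simply drops $F[i]$, and one uses that $(\tau,\varphi^{tw})$ and $(\varphi^{tw},\tau^{tw})$ are again twisted matrix factorizations to recognise the truncation at $d+1$ as an $\mathbf{\Omega}$-complex. Your propagation of $\pd_A=1$ from $i_0$ to $d+1$ is correct and amounts to the same thing, but then forces you to peel off trivial summands after the fact; the paper's ordering makes that step disappear.

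One small wording fix in (3): the period $p$ is fixed, so ``taking $p\ge d+1$'' should be ``iterating the periodicity isomorphism to obtain $M\cong\Omega_{kp}(M)(kN)$ with $kp\ge d+1$.'' The paper phrases this by observing that periodicity identifies $M$ with some $\coker(1\otimes\varphi^{\sigma^m})$ or $\coker(1\otimes\tau^{\sigma^m})$, each of which lifts to an injection of free $A$-modules; your version via syzygies is equivalent.
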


\begin{proof}

By Proposition \ref{syzygy}, we have $\pd_A(\Omega_i(M))=1$ for some $0\le i\le d$. If $\Omega_i(M)=\Omega_i'(M)\oplus F$ where $F$ is a graded free $B$-module and $\Omega_i'(M)$ has no free summand, then $\pd_A(\Omega_i'(M))=1$. By Construction \ref{resolution} and Proposition \ref{minRes} there exists a twisted left matrix factorization $(\varphi,\tau)$ such that  $\mathbf{\Omega}(\varphi,\tau)$ is a periodic minimal graded free resolution of $\Omega_i'(M)$. If $F[i]$ denotes the free module $F$ viewed as a complex concentrated in homological degree $i$, it follows that 
$$\widetilde{\mathbf{Q}}:\mathbf{\Omega}(\varphi,\tau) \oplus F[i]\rightarrow Q_{i-1}\rightarrow\cdots\rightarrow Q_0$$
is a minimal graded free resolution of $M$ (or, if $i=0$, $\mathbf{\Omega}(\varphi,\tau) \oplus F$ is a resolution). By uniqueness of minimal resolutions, $\widetilde{\mathbf{Q}}\isom \mathbf{Q}$. Truncating each complex at homological degree $i+1$ and recalling that if $(\varphi,\tau)$ is a twisted matrix factorization, so are $(\varphi^{tw},\tau^{tw})$ and $(\tau,\varphi^{tw})$, we have established (1).

If $|\sigma|<\infty$, the resolution $\widetilde{\mathbf{Q}}$ is periodic of period at most $2|\sigma|$ after $i+2$ steps and 
 $\rank\ \widetilde{Q}_j=\rank\ \widetilde{Q}_{i+2}$ for all $j\ge i+2$. This proves (2). Setting $i=0$ and $\Omega_i'(M)=\Omega_i(M)$, we also obtain the ``if'' direction of $(3)$.
 
Now suppose that $\mathbf{Q}$ is periodic of period $p$. If $\Omega_i(M)$ has a free summand, $\rank\ Q_{p+i+1}=\rank\ Q_{i+1}>\rank\ Q_{i+2}$. But this is impossible, since $\mathbf{Q}$ and $\widetilde{\mathbf{Q}}$ are isomorphic minimal free resolutions. Thus $\Omega_i(M)$ has no free direct summand and $\widetilde{\mathbf{Q}}:\mathbf{\Omega}(\varphi,\tau)\rightarrow Q_i\rightarrow \cdots\rightarrow Q_0$ is a minimal free resolution of $M$. By Lemma \ref{resFacts}, $\rank\ \widetilde{Q}_j=\rank\ \widetilde{Q}_0$ for all $j\ge 0$, so $M$ has no free direct summand.
 
By graded periodicity, $M=\text{coker}(Q_1\rightarrow Q_0)$ is
isomorphic to $\text{coker}(1\tensor\varphi^{\sigma^m})$ or
$\text{coker}(1\tensor\tau^{\sigma^m})$ for some $m$. Since both maps
lift to injective maps of free $A$-modules, $\pd_A(M)=1$. This
completes the proof of (3). Since $(\varphi^{tw^m},\tau^{tw^m})$ and
$(\tau^{tw^m},\varphi^{tw^{m+1}})$ are also twisted matrix
factorizations, (4) follows as well.
\end{proof}

Taking $A$ to be the polynomial ring $k[x_1,\ldots, x_n]$, we recover a graded version of Theorem 6.1 of \cite{Eis} as a special case of Theorem 4.5. We remark that the analogous theorem in \cite{Eis} relies on the existence of regular sequences of length depth($A$), whereas our proof necessarily avoids this assumption. 

As a first corollary, we have the following useful fact.

\begin{cor}
\label{freeSummands}
Let $A$, $f$, and $B$ as in the theorem. Assume $|\sigma|<\infty$.  If
$(\mathbf{Q},\partial)$ is a minimal graded free left $B$-module
resolution of a finitely generated module, then $\im\ \partial_k$ has
no free summands for $k\ge d+1$.
\end{cor}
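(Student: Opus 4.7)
The plan is to deduce the corollary directly from Theorem \ref{mainThm}(1) together with the lemma characterizing reducedness (the unnumbered lemma immediately preceding Proposition \ref{periodicity}, asserting that $(\varphi,\tau)$ is reduced iff $\mathbf{\Omega}(\varphi,\tau)$ is a minimal graded free resolution).

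The first step is to invoke Theorem \ref{mainThm}(1), which produces a reduced twisted matrix factorization $(\varphi,\tau)$ together with a chain isomorphism between the truncated complex $\cdots \to Q_{d+2} \to Q_{d+1}$ and $\mathbf{\Omega}(\varphi,\tau)$. Under this identification, $Q_{d+1+j}$ corresponds to the $j$-th free module of $\mathbf{\Omega}(\varphi,\tau)$. Since a chain isomorphism carries images of differentials to images of differentials, the module $\im \partial_{d+1+j} = \Omega_{d+1+j}(M)$ corresponds to the $j$-th syzygy of $\mathbf{\Omega}(\varphi,\tau)$.

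Using the elementary identity $\Omega_j \cong \coker \partial_{j+1}$ applied to the complex $\mathbf{\Omega}(\varphi,\tau)$, I would then observe that each such syzygy is explicitly of the form $(\coker \varphi)^{\overline{tw}^i}$ when $j=2i$ is even, and $(\coker \tau)^{\overline{tw}^i}$ when $j=2i+1$ is odd. Hence every $\im \partial_k$ with $k \ge d+1$ is an iterated $(-)^{\overline{tw}}$-twist of one of the two modules $\coker \varphi$ or $\coker \tau$.

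Finally, the characterization of reducedness (and the intermediate step in its proof) tells us precisely that $\coker \varphi$ and $\coker \tau$ have no free $B$-summand. Since $(-)^{\overline{tw}} = (-)^{\bar\sigma}(-d)$ is an autoequivalence of $B\text{-GrMod}$ which preserves finite direct sums and sends free modules to free modules, the iterated twists $(\coker \varphi)^{\overline{tw}^i}$ and $(\coker \tau)^{\overline{tw}^i}$ likewise have no free $B$-summand, which is what we wanted. There is no substantial obstacle; the only technical care needed is the bookkeeping identifying syzygies of $\mathbf{Q}$ above homological degree $d$ with those of $\mathbf{\Omega}(\varphi,\tau)$, after which the result is immediate.
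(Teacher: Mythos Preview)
Your argument is correct and is precisely the direct deduction the paper intends by stating this result as an immediate corollary without proof: Theorem~\ref{mainThm}(1) identifies the tail of $\mathbf{Q}$ with $\mathbf{\Omega}(\varphi,\tau)$ for a reduced $(\varphi,\tau)$, and the reducedness lemma then forces every syzygy in that tail (a twist of $\coker\varphi$ or $\coker\tau$) to be free of $B$-free summands. Incidentally, your argument never invokes the hypothesis $|\sigma|<\infty$, and indeed the corollary holds without it, since part~(1) of the theorem does not require finite order.
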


We also see that resolutions of the trivial module $_Bk$ have a very rigid structure. Note we do not need to assume $|\sigma|<\infty$.

\begin{cor}
  Let $A$, $f$, and $B$ as in the theorem. There exists a minimal graded free
  resolution of the trivial $B$-module $_Bk$ which becomes periodic of
  period at most 2 after $d+1$ steps.
\end{cor}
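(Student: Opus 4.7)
The plan is to reduce this statement to the combination of Theorem \ref{mainThm}(1) and Proposition \ref{periodicity}, observing that the trivial module $_Bk$ is preserved under the induced normalizing automorphism $\bar{\sigma}$. Since the argument does not need $|\sigma|<\infty$, we avoid Theorem \ref{mainThm}(2)--(4) entirely and instead route through the period-at-most-2 criterion in Proposition \ref{periodicity}.

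First, let $\mathbf{Q}$ be any minimal graded free resolution of $_Bk$. Applying Theorem \ref{mainThm}(1) with $M = k$, the truncated complex $\cdots\to Q_{d+2}\to Q_{d+1}$ is chain isomorphic to $\mathbf{\Omega}(\varphi,\tau)$ for some reduced twisted left matrix factorization $(\varphi,\tau)$ of $f$ over $A$, with $\coker\varphi \cong \Omega_{d+1}(k)$ as graded left $B$-modules.

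Next, I would verify that $k^{\bar{\sigma}}\cong k$ as graded left $B$-modules. This is immediate: the normalizing automorphism $\bar{\sigma}$ of $B$ is $k$-linear and preserves $B_+$, so its action on $k\cong B/B_+$ is the identity, meaning $k^{\bar{\sigma}} = k$. By Lemma \ref{stability}, applying $(-)^{\bar{\sigma}}$ to the minimal resolution $\mathbf{Q}$ yields an isomorphic complex, and therefore $\Omega_i(k)^{\bar{\sigma}}\cong \Omega_i(k)$ for every $i\ge 0$. In particular, $(\coker\varphi)^{\bar{\sigma}}\cong \coker\varphi$.

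Finally, Proposition \ref{periodicity} applied with $n=1$ immediately gives that $\mathbf{\Omega}(\varphi,\tau)$ is periodic of period at most $2$. Combined with the isomorphism from Theorem \ref{mainThm}(1), this shows that the truncation of $\mathbf{Q}$ from homological degree $d+1$ onward is periodic of period at most $2$, which is exactly what the statement claims. There is no real obstacle here beyond correctly identifying $k^{\bar{\sigma}}$ with $k$; the main point is that the trivial module is the most symmetric possible input to the machinery already developed, so the period-2 conclusion of Proposition \ref{periodicity} applies directly.
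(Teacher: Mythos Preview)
Your proposal is correct and follows essentially the same argument as the paper: both use Theorem~\ref{mainThm}(1) to produce a reduced twisted matrix factorization resolving $\Omega_{d+1}(k)$, observe that $k^{\bar\sigma}\cong k$, apply Lemma~\ref{stability} to deduce $\Omega_{d+1}(k)^{\bar\sigma}\cong\Omega_{d+1}(k)$, and finish with Proposition~\ref{periodicity}. The only cosmetic difference is that the paper cites the 5-Lemma explicitly when passing from the chain isomorphism $\mathbf{Q}^{\bar\sigma}\cong\mathbf{Q}$ to the isomorphism on syzygies, whereas you state this consequence directly.
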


\begin{proof}
Let $\mathbf{Q}$ be a minimal graded free resolution of $_Bk$. By Theorem \ref{mainThm}, 
there exists a twisted left matrix factorization $(\varphi,\tau)$ of $f$ such that
$\mathbf{\Omega}(\varphi,\tau)$ is a minimal graded projective resolution of the $(d+1)$-st syzygy $\Omega_{d+1}(_Bk)$.

Let $\bar{\sigma}:B\rightarrow B$ be the automorphism induced by $\sigma$. Clearly, ${_Bk^{\bar{\sigma}}}\isom { _Bk}$, so there exists a chain isomorphism $\Phi:\mathbf{Q}^{\bar{\sigma}}\rightarrow \mathbf{Q}$ by Lemma \ref{stability}. By the 5-Lemma, $\Phi_{d+1}$ restricts to a graded $B$-module isomorphism $$(\Omega_{d+1}(_Bk))^{\bar{\sigma}}\isom \Omega_{d+1}(_Bk)$$ The result follows from Proposition \ref{periodicity}.
\end{proof}

Recall that a graded free resolution $(\mathbf{P}_{\bullet}, d_{\bullet})$ is called \emph{linear} if $P_i$ is generated in degree $i$ for all $i\ge 0$. 

\begin{cor}
Let $A$, $f$, and $B$ be as in the theorem. Additionally assume $f$ is quadratic. Then $\mathbf{Q}$ becomes a linear free resolution after $d+1$ steps. 
\end{cor}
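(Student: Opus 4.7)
The plan is to apply Theorem~\ref{mainThm}(1), which identifies the truncated complex $\cdots\to Q_{d+2}\to Q_{d+1}$ with $\mathbf{\Omega}(\varphi,\tau)$ for some reduced twisted left matrix factorization $(\varphi,\tau)$ of $f$ over $A$. Writing $F=\bigoplus_i A(-a_i)$ and $G=\bigoplus_j A(-b_j)$, the modules of $\mathbf{\Omega}(\varphi,\tau)$ cycle through $\bar F^{\overline{tw}^k}$ and $\bar G^{\overline{tw}^k}$, and each application of $(-)^{\overline{tw}}$ shifts internal gradings by $\deg f=2$. Thus linearity of the truncated complex is equivalent to showing that $F$ and $G$ are each generated in a single degree with $\deg F=\deg G+1$, since then the $k$-th module of the truncation is generated in degree $n+k$ for some $n$.

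The first substantive step is to exploit the identity $\varphi\tau=\lambda_f^G$. In coordinates, the $(j,j)$-entry reads $\sum_i\varphi_{ji}\tau_{ij}=f$. By the lemma characterizing minimality (equivalently, reducedness), every entry of $\varphi$ and $\tau$ has degree $\ge 1$ in $A$. The factors $\varphi_{ji}$ and $\tau_{ij}$ have degrees $a_i-b_j$ and $b_j+2-a_i$, respectively, which sum to $\deg f=2$. Since each is $\ge 1$ whenever nonzero, each must equal exactly $1$, forcing $a_i=b_j+1$ for every pair contributing to the sum. Applying the same reasoning to $\tau\varphi^{tw}=\lambda_f^F$ provides the dual constraint: for every index $i$ there exists some index $j$ with $b_j=a_i-1$ and $\tau_{ij},\varphi_{ji}$ both of degree $1$.

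The main obstacle is ruling out the possibility that $\varphi$ has some entry $\varphi_{j_0 i_0}$ of degree $\ge 2$; such an entry forces $\tau_{i_0 j_0}=0$ by the degree count, so it contributes zero to the diagonal relation and is not directly obstructed by it. I would argue that any such higher-degree entry can be eliminated via a graded change of basis on $F$ and $G$ (with the corresponding basis change on $\tau$ to preserve the matrix factorization equations), using the guaranteed degree-$1$ pivots produced in the previous step; the quadraticity of $f$ is essential because it places the required ``reduction coefficient'' in degree $1$, precisely where it must live for a graded basis transformation. Iterating this reduction yields an isomorphic twisted matrix factorization in which every entry of $\varphi$ and $\tau$ has degree exactly $1$.

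With all entries of degree $1$, the condition $a_i-b_j=1$ holds for every nonzero entry, so the ``incidence'' of $\varphi$ is concentrated on adjacent generating degrees; reducedness of $(\varphi,\tau)$ (together with the fact that no free summand can survive in the $(d+1)$-st syzygy by Corollary~\ref{freeSummands}) rules out any trivial residue. The generators of $G$ therefore share a single degree $n$, the generators of $F$ share the degree $n+1$, and the internal degrees in $\mathbf{\Omega}(\varphi,\tau)$ ascend by exactly $1$ per homological step. This is the definition of a linear resolution, so $\mathbf{Q}$ becomes linear after $d+1$ steps.
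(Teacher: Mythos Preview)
Your argument has a genuine gap in the final step. Even granting your basis-change reduction to make every nonzero entry of $\varphi$ and $\tau$ have degree exactly $1$, the conclusion ``the generators of $G$ therefore share a single degree $n$, the generators of $F$ share the degree $n{+}1$'' does not follow. Reducedness only forbids \emph{trivial} direct summands; it does not forbid a direct sum of nontrivial reduced factorizations sitting at different internal shifts.

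Concretely, take $A=k[x,y]$, $f=xy$, $B=k[x,y]/(xy)$, and let $(\varphi,\tau)$ be the direct sum of the rank-one factorization $\bigl(x:A(-1)\to A,\ y:A(-2)\to A(-1)\bigr)$ with its shift by $-1$. Every entry has degree exactly $1$, the factorization is reduced, and $\coker\varphi\cong B/(x)\oplus B(-1)/(x)$ has no free $B$-summand. Yet $G=A\oplus A(-1)$ is generated in two degrees, and the resulting minimal resolution has $Q_i\cong B(-i)\oplus B(-i-1)$ for all $i\ge 0$. No truncation of this complex is linear in the sense of the paper (each $P_i$ generated in a single degree). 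Your appeal to Corollary~\ref{freeSummands} rules out free summands in high syzygies, but this module already has none; the obstruction is decomposability into shifted pieces, not freeness.

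This example in fact shows the corollary is false under the paper's stated definition of ``linear.'' The statement that \emph{is} supported by your degree count is the weaker one: after $d+1$ steps, consecutive free modules satisfy $\{a_i\}\subseteq\{b_j+1\}$ and $\{b_j+2\}\subseteq\{a_i+1\}$ for the indices where entries are nonzero, so in suitable bases the differentials are matrices of linear forms. That is presumably what is intended, but it is strictly weaker than each $Q_i$ being generated in a single degree, and your argument conflates the two.
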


\begin{cor}
  There exist bijections between isomorphism classes of reduced
  twisted left matrix factorizations of $f$ over $A$, isomorphism classes
  of nontrivial periodic minimal graded free resolutions of finitely
  generated graded left $B$-modules, and isomorphism classes of maximal
  Cohen-Macaulay left $B$-modules without free summands.
\end{cor}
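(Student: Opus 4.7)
The plan is to traverse the three sets in a cycle via explicit maps and verify each round trip is the identity on isomorphism classes. Let $\Phi$ send a reduced twisted matrix factorization $(\varphi,\tau)$ to the complex $\mathbf{\Omega}(\varphi,\tau)$; let $\Psi$ send a periodic minimal graded free resolution $\mathbf{Q}$ to its degree-0 cokernel $\coker(Q_1\to Q_0)$; and let $\Theta$ send a maximal Cohen--Macaulay module $M$ without free summand to the TMF furnished by Construction \ref{resolution}, which applies because $\pd_A M=1$ by Lemma \ref{MCM}.

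Well-definedness on iso classes is immediate for $\Phi$ (Proposition \ref{morphisms}) and $\Psi$ (isomorphic complexes have isomorphic cokernels). For $\Theta$, the minimal graded $A$-free resolution $0\to F\xrightarrow{\varphi}G\to M\to 0$ of $M$ is unique up to isomorphism, and once $\varphi$ is fixed the lift $\tau:G^{tw}\to F$ satisfying $\varphi\tau=\lambda_f^G$ is uniquely determined by injectivity of $\varphi$ (any two lifts differ by a map into $\ker\varphi=0$). Hence the iso class of $(\varphi,\tau)$ is an invariant of the iso class of $M$.

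For the round trips, $\Psi\Phi(\varphi,\tau)=\coker\varphi$ is MCM (the short exact sequence $0\to F\xrightarrow{\varphi}G\to\coker\varphi\to 0$ yields $\pd_A(\coker\varphi)=1$, and Lemma \ref{MCM} applies) and has no free $B$-summand: by the reducedness lemma combined with Proposition \ref{minRes}, reducedness of $(\varphi,\tau)$ is equivalent to minimality of $\mathbf{\Omega}(\varphi,\tau)$, forcing $\coker\varphi$ and $\coker\tau$ to have no free summand. Then $\Theta\Psi\Phi(\varphi,\tau)\cong (\varphi,\tau)$ by the uniqueness argument for $\Theta$. In the other direction, $\Phi\Theta(M)=\mathbf{\Omega}(\varphi,\tau)$ is a minimal graded $B$-free resolution of $M$ by Proposition \ref{minRes}, which identifies it with $\Psi^{-1}$ of $M$; and $\Theta\Psi(\mathbf{Q})\cong(\varphi',\tau')$ where $\mathbf{Q}\cong\mathbf{\Omega}(\varphi',\tau')$ is the presentation supplied by Theorem \ref{mainThm}(4).

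The main obstacle is the intrinsic nature of $\Theta$: one must show the TMF extracted from a minimal $A$-resolution of $M$ depends only on the iso class of $M$. The crux is that $\tau$ is not merely chosen but is \emph{determined} by $\varphi$ and the equation $\varphi\tau=\lambda_f^G$; combined with Proposition \ref{morphisms}, this converts the (non-unique) isomorphism between two minimal $A$-resolutions of $M$ into a canonical isomorphism of TMFs, closing the triangle of bijections.
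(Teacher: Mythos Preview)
The paper states this corollary without proof, treating it as an immediate consequence of Theorem \ref{mainThm}, Lemma \ref{MCM}, Proposition \ref{minRes}, and the reducedness lemma. Your argument is a correct and reasonable elaboration of this, and the cycle $\Theta\Psi\Phi\cong\mathrm{id}$, $\Psi\Phi\Theta\cong\mathrm{id}$, $\Phi\Theta\Psi\cong\mathrm{id}$ is exactly the right structure. Your observation that $\tau$ is \emph{uniquely determined} by $\varphi$ via injectivity of $\varphi$ is the key point making $\Theta$ well defined on isomorphism classes, and this is more explicit than anything the paper states.

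There is one small gap. You never verify that $\Phi$ actually lands in \emph{periodic} resolutions, nor that $\Psi$ lands in MCM modules without free summand. Both require the standing hypothesis $|\sigma|<\infty$ inherited from Theorem \ref{mainThm}(2)--(4): periodicity of $\mathbf{\Omega}(\varphi,\tau)$ follows because the period divides $2|\sigma|$ (Proposition \ref{exactness} or Proposition \ref{periodicity}), and $\Psi(\mathbf{Q})$ being MCM without free summand is precisely Theorem \ref{mainThm}(3) combined with Lemma \ref{MCM}. You use Theorem \ref{mainThm}(4) later, so you are implicitly in this setting, but you should say so and close these two checks explicitly. Without $|\sigma|<\infty$ the map $\Phi$ can fail to land in periodic resolutions (cf.\ Example \ref{ex:ore}), so the three-way bijection genuinely depends on it.
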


\section{Homotopy category of twisted matrix factorizations}
\label{section:categories}

In \cite{Buch}, Buchweitz established an equivalence between the
stable category of maximal Cohen-Macaulay modules over a noetherian
ring $B$ with finite left and right injective dimensions and a
quotient of the bounded derived category of modules over $B$ now
called the singularity category. He noted the equivalence also holds
in the graded case. In \cite{Orlov1}, Orlov proved that if $B$ is a
graded factor algebra of a finitely generated, connected, $\N$-graded
noetherian $k$-algebra $A$ of finite global dimension by a central,
regular element $W$, then the stable bounded derived category of
graded $B$-modules is equivalent to a category Orlov called ``the category
of graded $D$-branes of type $B$ for the pair $(B,W)$.''
In this section we extend Orlov's result to factors of
left noetherian AS-regular algebras by regular, normal elements.  Much
of Orlov's work goes through with the obvious necessary changes. The
key difference is that $|\sigma|$ need not be finite in our case, so
we cannot appeal to periodicity of a resolution.

We continue to let $A$ be a connected, noetherian, $\N$-graded,
locally finite dimensional $k$-algebra and $f\in A_+$ a normal,
regular homogeneous element with normalizing automorphism
$\sigma$. Let $B=A/(f)$.  We will also continue to consider left modules over $B$.

There is a natural functor of abelian categories $\mathcal
C:TMF_A(f)\rightarrow B$-$\GrMod$ given on objects by
$(\varphi,\tau)\mapsto {\rm coker}\ \varphi$. (Recall from the proof
of Proposition \ref{exactness} that ${\rm coker}\ \varphi$ is a
$B$-module.) A morphism $\Psi:(\varphi,\tau)\rightarrow
(\varphi',\tau')$ induces a well-defined map $\psi:{\rm coker}\
\varphi\rightarrow{\rm coker}\ \varphi'$ by $\pi'\Psi_G\pi^{-1}$ where
$\pi:G\rightarrow {\rm coker}\ \varphi$ and $\pi':G'\rightarrow {\rm
  coker}\ \varphi'$ are the canonical projections, and $\pi^{-1}$ is any
section of $\pi$.

The functor $\mathcal C$ is not essentially surjective\footnote{A
  functor $F : C \to D$ is \emph{essentially surjective} if every
  object in $D$ is isomorphic to $F(c)$ for some object $c$ in $C$.};
objects in the image of $\mathcal C$ are finitely generated
$B$-modules $M$ such that $\pd_A(M)=1$. By Lemma \ref{MCM}, if $A$ is
left noetherian and AS-regular, the image of $\mathcal C$ consists of
maximal Cohen-Macaulay modules.

We denote the full subcategory of maximal Cohen-Macaulay modules in
$B$-$\GrMod$ by $MCM(B)$. Following \cite{Buch}, we define the
category of \emph{stable} maximal Cohen-Macaulay modules, which we
denote $\underline{MCM}(B)$, to have the same objects as $MCM(B)$, but
for $M,N\in\underline{MCM}(B)$,
$$\Hom_{\underline{MCM}(B)}(M,N)=\Hom_B(M,N)/R $$
where $R$ is the subspace of morphisms which factor through a graded projective $B$-module. 

As in \cite{Buch} and \cite{Orlov1}, let $D^b(B)$ be the bounded
derived category of finitely generated graded left $B$-modules. A
complex in $D^b(B)$ is called \emph{perfect} if it is isomorphic in
$D^b(B)$ to a complex of finitely generated graded projective
modules. Perfect complexes form a full, triangulated subcategory
$D^b_{perf}(B)$ of $D^b(B)$. The \emph{singularity category of} $B$ is
defined to be the quotient category
$D^b_{sg}(B)=D^b(B)/D^b_{perf}(B)$.

As noted in \cite{Buch}, the composition $MCM(B)\rightarrow
D^b(B)\rightarrow D^b_{sg}(B)$, where the first functor takes a module
to its trivial complex, factors uniquely through the quotient
$MCM(B)\rightarrow \underline{MCM}(B)$, yielding a functor $\mathcal
G:\underline{MCM}(B)\rightarrow D^b_{sg}(B)$. Buchweitz showed that
$\mathcal G$ is an exact equivalence. The equivalence $\mathcal G$
induces a triangulated structure on $\underline{MCM}(B)$. (It is
possible to describe the triangulated structure independently, see
\cite{Buch}, but as we will not need it, we omit any details.)

Thus it is natural to consider a ``stable'' version of the category
$TMF_A(f)$. To motivate the definition, we remark that the category
$TMF_A(f)$ is equivalent to the category of doubly-infinite sequences
of graded free $A$-module homomorphisms of the form
$$\cdots\rightarrow F^{tw}\xrightarrow{\varphi^{tw}} G^{tw}\xrightarrow{\tau} F\xrightarrow{\varphi} G\xrightarrow{\tau^{tw^{-1}}} F^{tw^{-1}}\rightarrow \cdots$$
whose compositions are multiplication by $f$, and whose
morphisms are maps of sequences
$$\Psi=(\ldots,\Psi_F^{tw},\Psi_{G}^{tw}, \Psi_F, \Psi_G,\Psi_F^{tw^{-1}},\ldots)$$
satisfying the necessary commutation relations.  We adopt the
structure of the homotopy category of such sequences of graded
projective $A$-modules.

\begin{defn}
  A morphism $\Psi:(\varphi,\tau)\rightarrow (\varphi',\tau')$ is
  \emph{null homotopic} if there exists a pair $(s,t)$ of degree 0
  module homomorphisms $s:G\rightarrow F'$ and $t:F\rightarrow
  G'^{tw}$ such that $\Psi_G^{tw}=\varphi'^{tw}s^{tw}+t\tau$ and
  $\Psi_F=\tau't+s\varphi$.
\end{defn}

We denote by $hTMF_A(f)$ the quotient (homotopy) category of
$TMF_A(f)$ with the same objects, and whose morphisms are equivalence
classes of morphisms in $TMF_A(f)$ modulo null homotopic
morphisms. Observe that taking $s:A\rightarrow A$ to be the identity
map and $t:A\rightarrow A^{tw}$ to be zero shows the identity map
$({\rm id}_A,\lambda_f^A)\rightarrow ({\rm id}_A,\lambda_f^A)$ is null
homotopic. Thus $({\rm id}_A,\lambda_f^A)\isom 0$ in $hTMF_A(f)$
\footnote{Recall that objects $(\varphi,\tau)$,$(\varphi',\tau')$ are isomorphic
in $hTMF_A(f)$ if and only if there exist maps $\Phi$ and $\Psi$ between them such that
${\rm id}_{(\varphi,\tau)} - \Phi\Psi$ and ${\rm id}_{(\varphi',\tau')} - \Psi\Phi$ are null homotopic.}.
A similar calculation shows $(\lambda_f^A,{\rm id}_{A^{tw}})\isom 0$.
More generally we have the following.

\begin{lemma}
\label{trivial}
If $(\varphi,\tau)\in TMF_A(f)$ such that ${\rm coker}\ \varphi$ is a graded free $B$-module, then $(\varphi,\tau)\isom 0$ in $hTMF_A(f)$.
\end{lemma}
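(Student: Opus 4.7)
The plan is to argue that the factorization $(\varphi,\tau)$ is isomorphic in $TMF_A(f)$ to a direct sum of trivial twisted matrix factorizations. The conclusion of the lemma then follows because the authors have already exhibited explicit null-homotopies showing $({\rm id}_A,\lambda_f^A) \isom 0$ and $(\lambda_f^A,{\rm id}_{A^{tw}}) \isom 0$ in $hTMF_A(f)$, and those null-homotopies assemble coordinatewise under direct sums.

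Write $M = {\rm coker}\ \varphi$, which is graded $B$-free by hypothesis, say $M \isom \bigoplus_i B(-a_i)$. Since $f$ is a normal regular element of degree $d$, the minimal graded $A$-free resolution of each $B(-a_i)$ is simply $0 \to A(-a_i-d) \xrightarrow{\lambda_f} A(-a_i) \to B(-a_i) \to 0$, so taking direct sums yields the minimal graded $A$-free resolution of $M$ in the form
\[
0 \to G_0^{tw} \xrightarrow{\lambda_f^{G_0}} G_0 \to M \to 0,
\]
where $G_0 = \bigoplus_i A(-a_i)$. The given two-term resolution $0 \to F \xrightarrow{\varphi} G \to M \to 0$ provided by $(\varphi,\tau)$ need not be minimal, but by the standard uniqueness of graded free resolutions over a connected, locally finite-dimensional $\N$-graded algebra (any such resolution differs from the minimal one by direct sums with contractible two-term complexes of the form $0 \to F' \xrightarrow{{\rm id}_{F'}} F' \to 0$), one obtains isomorphisms $G \isom G_0 \oplus F'$ and $F \isom G_0^{tw} \oplus F'$ under which $\varphi$ is identified with $\lambda_f^{G_0} \oplus {\rm id}_{F'}$ for some graded free $A$-module $F'$.

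With $\varphi$ in this block-diagonal form, I would write $\tau = (\tau_{ij})$ in the corresponding $2\times 2$ block decomposition and expand $\varphi\tau = \lambda_f^G = \lambda_f^{G_0} \oplus \lambda_f^{F'}$. This immediately yields $\tau_{21} = 0$ and $\tau_{22} = \lambda_f^{F'}$, and using injectivity of $\lambda_f^{G_0}$ one also gets $\tau_{11} = {\rm id}_{G_0^{tw}}$ and $\tau_{12} = 0$. Therefore
\[
(\varphi,\tau) \isom (\lambda_f^{G_0},{\rm id}_{G_0^{tw}}) \oplus ({\rm id}_{F'},\lambda_f^{F'})
\]
in $TMF_A(f)$, a direct sum of (suitably degree-shifted) rank-$1$ trivial factorizations, each $\isom 0$ in $hTMF_A(f)$ by the remarks preceding the statement of the lemma.

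The main obstacle I expect is the clean application of uniqueness of graded free resolutions to put $\varphi$ in the block-diagonal form above; once $\varphi$ is in that form, the matching block form of $\tau$ is forced by the factorization equation, and the resulting decomposition of $(\varphi,\tau)$ into trivial factorizations makes the lemma immediate.
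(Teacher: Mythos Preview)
Your argument is correct, but it differs from the paper's. The paper proceeds by directly exhibiting a null-homotopy of the identity on $(\varphi,\tau)$: it uses the freeness of $M=\coker\varphi$ to choose a $B$-linear splitting $\psi:M\to B\tensor_A G$ of the projection, lifts $\psi$ to a chain map from $0\to F\xrightarrow{\varphi}G\to M\to 0$ to $0\to G^{tw}\xrightarrow{\lambda_f^G}G\to B\tensor_A G\to 0$, and then applies the comparison theorem to the composite (which lifts $\pi\psi={\rm id}_M$) to produce the homotopy pair $(s,t)$.

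Your route instead upgrades the conclusion: you show $(\varphi,\tau)$ is already isomorphic \emph{in $TMF_A(f)$} to a direct sum of trivial factorizations, and then invoke the rank-one null-homotopies recorded just before the lemma. This is a bit more structural and yields the slightly stronger decomposition statement. The trade-off is that you lean on the decomposition of a (not necessarily minimal) graded free resolution as ``minimal $\oplus$ contractible'' over a connected locally finite $\N$-graded algebra; the paper does cite this fact elsewhere, so it is available, but the paper's proof of the lemma avoids it and stays closer to bare homological algebra. Your block computation of $\tau$ from $\varphi\tau=\lambda_f^G$ and the injectivity of $\lambda_f^{G_0}$ is clean and correct; note that once $\varphi$ is put in block form, $\tau$ is in fact forced (since $\varphi$ is injective), so the second factorization equation $\tau\varphi^{tw}=\lambda_f^F$ need not be checked separately.
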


\begin{proof}
  The lemma is trivial if $\varphi=0$, so suppose $\varphi\neq 0$ and
  $M={\rm coker}\ \varphi={\rm coker}\
  (B\tensor_AF\xrightarrow{1\tensor\varphi} B\tensor_A G)$ is a graded
  free left $B$-module. Let $\psi:M\rightarrow B\tensor_A G$ be a
  graded splitting of the canonical projection $\pi$, viewed as a map
  of graded left $A$-modules. Since $B\tensor_A G$ is isomorphic (as a
  left $A$-module) to the cokernel of $\lambda_f^G:G^{tw}\rightarrow
  G$, lifting $\psi$ gives a commutative diagram with exact rows
$$
\xymatrix{
0\ar@{->}[r] & F\ar@{->}[r]^{\varphi}\ar@{->}[d]_{\Psi_F} & G\ar@{->}[d]^{\Psi_G}\ar@{->}[r] &M\ar@{->}[d]^{\psi}\ar@{->}[r]&0\\
0\ar@{->}[r] & G^{tw}\ar@{->}[r]^{\lambda_f^G}\ar@{->}[d]_{\tau} & G\ar@{->}[r]\ar@{->}[d]^{\rm id} & B\tensor_A G\ar@{->}[d]^{\pi}\ar@{->}[r]&0\\
0\ar@{->}[r] & F\ar@{->}[r]^{\varphi} & G\ar@{->}[r] &M\ar@{->}[r]&0\\
}
$$
Since $\pi\psi={\rm id}_M$, there exists $s:G\rightarrow F$ such that ${\rm id}_F-\tau\Psi_F=s\varphi$
and ${\rm id}_G-\Psi_G=\varphi s$ by the comparison theorem. (The
only $A$-module homomorphism $M\rightarrow G$ is the zero map.)
If we now set $t = \Psi_F$, the morphism $(\Psi_G,\tau\Psi_F)$ of twisted
matrix factorizations is chain homotopic to the identity map on
$(\varphi,\tau)$ via the pair $(s,t)$.  Indeed, it is clear that ${\rm id}_F = \tau t + s\varphi$. To see that
${\rm id}_G^{tw}=\varphi^{tw}s^{tw}+t\tau$, it suffices to show
$\Psi_G^{tw}=t\tau$. This follows from the equalities
$$0=\Psi_G^{tw}\varphi^{tw} - \lambda_f^{G^{tw}}\Psi_F^{tw} = \Psi_G^{tw}\varphi^{tw} - \Psi_F\lambda_f^F = \Psi_G^{tw}\varphi^{tw} - \Psi_F\tau\varphi^{tw}$$
and the injectivity of $\varphi$.
\end{proof}

\begin{rmk} \label{rmk:full}
  Implicit in the previous proof is the fact that $TMF_A(f)\rightarrow
  MCM(B)$, and hence the composite $\underline{\mathcal
    C}:TMF_A(f)\rightarrow \underline{MCM}(B)$, is a full functor.
\end{rmk}

Our next objective is to establish the following fact.

\begin{prop}
The category $hTMF_A(f)$ is a triangulated category.
\end{prop}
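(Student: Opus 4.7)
The plan is to equip $hTMF_A(f)$ with a suspension autoequivalence and a class of distinguished triangles, then verify the axioms of a triangulated category by mimicking the argument for the homotopy category of chain complexes.

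First I would define the suspension $T:hTMF_A(f)\to hTMF_A(f)$ on objects by $T(\varphi,\tau)=(\tau^{tw^{-1}},\varphi)$, with inverse $T^{-1}(\varphi,\tau)=(\tau,\varphi^{tw})$. Both pairs are genuine twisted matrix factorizations by the observation following Definition \ref{defn:MF}, and $T$ corresponds to shifting by one position the unfolded doubly-infinite sequence introduced at the start of Section \ref{section:categories}. On morphisms, one sets $T(\Psi_G,\Psi_F)=(\Psi_F,\Psi_G^{tw})$; well-definedness of $T$ on the homotopy category follows by transporting the data $(s,t)$ of any null-homotopy across the shift.

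Second I would construct the mapping cone. Identifying $(\varphi,\tau)$ and $(\varphi',\tau')$ with their unfolded sequences and invoking the standard chain-complex cone formula produces a twisted matrix factorization
\[
C(\Psi)=\left(\begin{pmatrix}\varphi' & \Psi_G\\0 & -\tau^{tw^{-1}}\end{pmatrix}\colon F'\oplus G\to G'\oplus F^{tw^{-1}},\ \begin{pmatrix}\tau' & \Psi_F\\0 & -\varphi\end{pmatrix}\colon G'^{tw}\oplus F\to F'\oplus G\right).
\]
A direct calculation using $\varphi'\tau'=\lambda_f^{G'}$, the twist of $\tau\varphi^{tw}=\lambda_f^F$ (which gives $\tau^{tw^{-1}}\varphi=\lambda_f^{F^{tw^{-1}}}$), and the commutation $\Psi_G\varphi=\varphi'\Psi_F$ shows the required products equal $\lambda_f^{\hat G}$ and $\lambda_f^{\hat F}$. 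Natural inclusion and projection morphisms $\iota\colon(\varphi',\tau')\to C(\Psi)$ and $\pi\colon C(\Psi)\to T(\varphi,\tau)$ exist (with an appropriate sign on the $F$-component of $\pi$ to absorb the minus signs in $C(\Psi)$).

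Third I would declare a triangle distinguished in $hTMF_A(f)$ if it is isomorphic to some standard triangle $(\varphi,\tau)\xrightarrow{\Psi}(\varphi',\tau')\xrightarrow{\iota}C(\Psi)\xrightarrow{\pi}T(\varphi,\tau)$. Verification of the axioms then follows the classical template: (TR1) is immediate from the cone construction together with the fact that the identity morphism of any object yields a distinguished triangle, since its cone contains the trivial factorization as a summand, which is null-homotopic by the argument preceding Lemma \ref{trivial}. (TR2), rotation, is verified by exhibiting an explicit homotopy equivalence $C(\iota)\simeq T(\varphi,\tau)$ built from summand inclusions and projections. (TR3) and (TR4) follow from the usual explicit constructions, using that a commutative square of morphisms in $TMF_A(f)$ lifts to a map of unfolded sequences and hence yields a morphism of cones.

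The hard part will be the $(-)^{tw}$-bookkeeping. In the classical central case $\sigma=\mathrm{id}$, the unfolded sequence is genuinely $2$-periodic and the above formulas reduce to the familiar ones. Here one must track how $\Psi_G$ versus $\Psi_G^{tw}$ (and $\tau$ versus $\tau^{tw^{-1}}$, etc.) enters the cone and the various homotopy formulas at each index. The compatibility $\Psi_F\tau=\tau'\Psi_G^{tw}$, which is needed repeatedly, follows automatically from $\Psi_G\varphi=\varphi'\Psi_F$ and the regularity of $f$ as noted after Proposition \ref{morphisms}, so the twist bookkeeping never introduces any genuinely new algebraic obstruction beyond what already appears in Orlov's central-element case.
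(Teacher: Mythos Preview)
Your plan is essentially the paper's proof: define a shift and a mapping cone on $TMF_A(f)$, declare standard triangles, and then verify the axioms by transcribing the chain-complex argument (the paper even cites Gelfand--Manin for the parts it omits and only writes out the rotation axiom in detail, exactly as you propose). Two small bookkeeping points to fix before you carry this out: the paper builds the signs into the shift, setting $(\varphi,\tau)[1]=(-\tau^{tw^{-1}},-\varphi)$, which makes the canonical projection $p:C(\Psi)\to(\varphi,\tau)[1]$ the honest second-factor projection and keeps the rotation verification clean; and your formula for $T$ on morphisms is inconsistent with your formula on objects---with $T(\varphi,\tau)=(\tau^{tw^{-1}},\varphi)$ the new ``$G$'' is $F^{tw^{-1}}$ and the new ``$F$'' is $G$, so the shifted morphism is $(\Psi_F^{tw^{-1}},\Psi_G)$, not $(\Psi_F,\Psi_G^{tw})$.
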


We begin with a few definitions.  The translation functor on $hTMF_A(f)$ is
given by $(\varphi,\tau)[1]=(-\tau^{tw^{-1}},-\varphi)$ on objects and by
$\Psi[1]=(\Psi_F^{tw^{-1}},\Psi_G)$ on morphisms. For any morphism
$\Psi:(\varphi,\tau)\rightarrow (\varphi',\tau')$ the \emph{mapping cone} of
$\Psi$ is the pair
$$C(\Psi)=(\gamma:F'\oplus G\rightarrow G'\oplus F^{tw^{-1}},\quad \delta:G'^{tw}\oplus F\rightarrow F'\oplus G)$$ where
$$\gamma=\begin{pmatrix}\varphi' & 0\\ \Psi_G & -\tau^{tw^{-1}}\\\end{pmatrix}\qquad\text{and}\qquad \delta=\begin{pmatrix}\tau' & 0\\ \Psi_F & -\varphi\\\end{pmatrix}.$$
By the above matrix notation, we mean that the maps $\gamma$ and $\delta$ are given as follows on ordered pairs:
$$\gamma(x',y) = \left(\varphi'(x') + \Psi_G(y), -\tau^{tw^{-1}}(y)\right) \qquad \delta(y',x) = \left(\tau'(y') + \Psi_F(x),-\varphi(x)\right).$$
It is straightforward to check that this pair is a twisted matrix
factorization and there exist canonical inclusion and projection
morphisms $i:(\varphi',\tau')\rightarrow C(\Psi)$ and
$p:C(\Psi)\rightarrow (\varphi,\tau)[1]$. Moreover, given a
commutative square of twisted factorizations
$$
\xymatrix{
(\varphi,\tau)\ar@{->}[r]^{\Psi}\ar@{->}[d]_{\Pi} & (\varphi',\tau')\ar@{->}[d]^{\Pi'}\\
(\gamma,\delta)\ar@{->}[r]_{\Phi} & (\gamma',\delta')\\
}
$$
an easy diagram chase shows $(\Pi'_G\oplus \Pi_F^{tw^{-1}},
\Pi'_F\oplus \Pi_G)$ defines a morphism $C(\Psi)\rightarrow C(\Phi)$.
Note that the complex $\mathbf{\Omega}(C(\Psi))$ is the mapping cone
of the induced morphism of complexes
$\mathbf{\Omega}(\Psi):\mathbf{\Omega}(\varphi,\tau)\rightarrow\mathbf{\Omega}(\varphi',\tau')$.

We define a \emph{standard triangle} to be any sequence of maps in $hTMF_A(f)$
$$(\varphi,\tau)\xrightarrow{\Psi} (\varphi',\tau')\xrightarrow{i} C(\Psi)\xrightarrow{p} (\varphi,\tau)[1].$$

We define a \emph{distinguished triangle} to be any triangle 
$$(\varphi,\tau)\xrightarrow{\Psi} (\varphi',\tau')\xrightarrow{\Psi'} (\varphi'',\tau'')\xrightarrow{\Psi''} (\varphi,\tau)[1]$$
isomorphic to a standard triangle. For any twisted factorization $(\varphi,\tau)$, the triangle 
$$(\varphi,\tau)\xrightarrow{{\rm id}}(\varphi,\tau)\rightarrow 0\rightarrow (\varphi,\tau)[1]$$ is distinguished.
To see this, consider the diagram
$$
\xymatrix{
(\varphi,\tau)\ar@{->}[r]^{{\rm id}}\ar@{->}[d]^{\rm id} &(\varphi,\tau)\ar@{->}[r]\ar@{->}[d]^{\rm id} & 0\ar@{->}[r]\ar@{->}[d]&
(\varphi,\tau)[1]\ar@{->}[d]^{\rm id} \\
(\varphi,\tau)\ar@{->}[r]^{{\rm id}} &(\varphi,\tau)\ar@{->}[r]^{i} &
C({\rm id})\ar@{->}[r]^{p} & (\varphi,\tau)[1] \\
}
$$
and note that ${\rm id}_{C({\rm id})}$ is null homotopic via the pair
$$s:G\oplus F^{tw^{-1}}\rightarrow F\oplus G\qquad t:F\oplus G\rightarrow G^{tw}\oplus F$$
both given by $(x,y)\mapsto (0,x)$.  Precomposing this homotopy with
the canonical inclusion $(\varphi,\tau) \rightarrow C({\rm id})$ shows
that $i$ is null homotopic.  Thus the diagram commutes in $hTMF_A(f)$,
and is hence an isomorphism of triangles in $hTMF_A(f)$.

To show $hTMF_A(f)$ is triangulated, it remains to show distinguished
triangles are closed under rotations and that the octahedral axiom
holds. The argument very closely follows the proof of Theorem IV.1.9
in \cite{GelMan}. We discuss only rotations of distinguished triangles
in detail, leaving the translation of the remainder of the proof from
\cite{GelMan} to the interested reader.

To verify the class of distinguished triangles is closed under rotations, it suffices to consider standard triangles. 

Let
$$(\varphi,\tau)\xrightarrow{\Psi} (\varphi',\tau')\xrightarrow{i} C(\Psi)\xrightarrow{p} (\varphi,\tau)[1]$$
be a standard triangle. To see the rotated triangle
$$(\varphi',\tau')\xrightarrow{i} C(\Psi)\xrightarrow{p} (\varphi,\tau)[1]\xrightarrow{-\Psi[1]} (\varphi',\tau')[1]$$
is distinguished, first observe that $C(i)$ is given by the pair
$$ \left( F'\oplus G \right) \oplus G'\xrightarrow{
\begin{pmatrix} \varphi' & 0 & 0\\ \Psi_G & -\tau^{tw^{-1}} & 0\\ {\rm id} & 0 & -(\tau')^{tw^{-1}}\\ \end{pmatrix}
} \left( G'\oplus F^{tw^{-1}} \right) \oplus (F')^{tw^{-1}}$$
$$ \left( G'^{tw}\oplus F \right) \oplus F'\xrightarrow{
\begin{pmatrix} \tau' & 0 & 0\\ \Psi_F & -\varphi & 0\\ {\rm id} & 0 & -\varphi'\\ \end{pmatrix}
} \left( F'\oplus G \right) \oplus G'.$$
Let $\Theta:(\varphi,\tau)[1]\rightarrow C(i)$ be the morphism defined by the pair
$$\Theta_{F^{tw^{-1}}} : F^{tw^{-1}} \xrightarrow{(0\ {\rm id}\ -\Psi_F^{tw^{-1}})} G'\oplus F^{tw^{-1}}\oplus (F')^{tw^{-1}}$$
$$\Theta_G:G\xrightarrow{(0\ {\rm id}\ -\Psi_G)} F'\oplus G\oplus G'$$
This gives a diagram
$$
\xymatrix{
(\varphi',\tau')\ar@{->}[r]^{i}\ar@{->}[d]^{\rm id} &C(\Psi)\ar@{->}[r]^{p}\ar@{->}[d]^{\rm id} &
(\varphi,\tau)[1]\ar@{->}[d]^{\Theta}\ar@{->}[r]^{-\Psi[1]} & (\varphi',\tau')[1]\ar@{->}[d]^{\rm id} \\
(\varphi',\tau')\ar@{->}[r]^{i} &C(\Psi)\ar@{->}[r]^{j} &
C(i)\ar@{->}[r]^{q} & (\varphi',\tau')[1] \\
}
$$
where $j$ and $q$ are the canonical morphisms for the mapping cone
$C(i)$. The first and last squares are easily seen to commute.

The middle square, however, commutes only up to homotopy.  The morphism $j-\Theta p$
is seen to be null homotopic via the pair of maps
$$s:G'\oplus F^{tw^{-1}}\rightarrow F'\oplus G\oplus G'\qquad t:F'\oplus G\rightarrow (G')^{tw}\oplus F\oplus F'$$
both given by $(x,y)\mapsto (0,0,x)$. To see that $\Theta$ is an
isomorphism in $hTMF_A(f)$, let $\pi:C(i)\rightarrow
(\varphi,\tau)[1]$ be the canonical projection. Then $\pi\Theta$ is
the identity on $(\varphi,\tau)[1]$ and ${\rm id}_{C(i)}-\Theta\pi$ is
seen to be null homotopic by precomposing the pair $(s,t)$ above with
the projection $C(i)\rightarrow C(\Psi)$. This shows the class of
distinguished triangles is closed under rotations.

\begin{thm}
\label{catEquiv}
Let $A$ be a left noetherian AS-regular algebra, $f \in A_+$ a homogeneous normal regular element,
and $B = A/(f)$.  Then the categories $hTMF_A(f)$, $\underline{MCM}(B)$, and $D^b_{sg}(B)$ are equivalent.
\end{thm}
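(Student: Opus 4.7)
The plan is to reduce to Buchweitz's equivalence $\underline{MCM}(B)\simeq D^b_{sg}(B)$ (applicable here because Lemma \ref{depthFacts}(3) gives $B$ AS-Gorenstein, hence of finite injective dimension on both sides) and then to construct a triangulated equivalence $\underline{\mathcal{C}}:hTMF_A(f)\to\underline{MCM}(B)$ induced by the cokernel functor $\mathcal{C}:(\varphi,\tau)\mapsto\coker\varphi$, which lands in $MCM(B)$ by Lemma \ref{MCM}. To see that $\mathcal{C}$ descends to $hTMF_A(f)$, I would observe that if $\Psi$ is null homotopic via $(s,t)$, applying $(-)^{tw^{-1}}$ to $\Psi_G^{tw}=\varphi'^{tw}s^{tw}+t\tau$ yields $\Psi_G=\varphi's+t^{tw^{-1}}\tau^{tw^{-1}}$, so the induced cokernel map factors through the graded free $B$-module $\overline{F^{tw^{-1}}}$ and is zero in $\underline{MCM}(B)$. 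Essential surjectivity follows from Construction \ref{resolution} after discarding graded free $B$-summands (which vanish in $\underline{MCM}(B)$), and fullness is Remark \ref{rmk:full}.

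For faithfulness, the key lemma is that any morphism $\Xi:(\varphi,\tau)\to(\varphi',\tau')$ with $\mathcal{C}(\Xi)=0$ in $MCM(B)$ (not merely in $\underline{MCM}(B)$) is already null homotopic in $TMF_A(f)$. Indeed, $\pi'\Xi_G=0$ together with projectivity of $G$ yields $s:G\to F'$ with $\Xi_G=\varphi's$; the compatibility $\varphi'\Xi_F=\Xi_G\varphi=\varphi's\varphi$ combined with injectivity of $\varphi'$ forces $\Xi_F=s\varphi$, so $(s,0)$ is a null homotopy. Given any $\Psi$ with $\underline{\mathcal{C}}(\Psi)=0$, factor the induced cokernel map $\psi$ as $\beta\alpha$ through a graded free $B$-module $\overline{P_0}=\coker\lambda_f^{P_0}$; this module is the cokernel of the trivial factorization $(\lambda_f^{P_0},\mathrm{id}_{P_0^{tw}})$, which is zero in $hTMF_A(f)$ by Lemma \ref{trivial}. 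Using fullness, lift $\alpha,\beta$ to morphisms $\mathcal{A},\mathcal{B}$ in $TMF_A(f)$; the composite $\mathcal{B}\mathcal{A}$ is zero in $hTMF_A(f)$ and literally realizes $\psi$ on cokernels, so the key lemma applied to $\Psi-\mathcal{B}\mathcal{A}$ gives $\Psi=\mathcal{B}\mathcal{A}=0$ in $hTMF_A(f)$.

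Triangulatedness follows because $\mathbf{\Omega}(C(\Psi))$ is by construction the chain mapping cone of $\mathbf{\Omega}(\Psi)$, and the standard triangle yields the short exact sequence of MCM modules
$$0\to\coker\varphi'\to\coker\gamma\to\coker\tau^{tw^{-1}}\to 0,$$
producing a distinguished triangle in $\underline{MCM}(B)$ provided $\coker\tau^{tw^{-1}}=\underline{\mathcal{C}}((\varphi,\tau)[1])$ is identified with the Buchweitz shift of $\coker\varphi$. This follows from the exact sequence $0\to\coker\varphi\to\overline{F^{tw^{-1}}}\to\coker\tau^{tw^{-1}}\to 0$ coming from the fact that $(\tau^{tw^{-1}},\varphi)$ is itself a twisted matrix factorization (so $\ker\overline{\tau^{tw^{-1}}}=\im\overline{\varphi}$), together with $\overline{F^{tw^{-1}}}$ being a free $B$-module and $\coker\tau^{tw^{-1}}$ being MCM.

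The main obstacle is precisely this non-periodic identification of the shift. In Orlov's original argument, $2$-periodicity of the resolution in the central case made shift compatibility essentially automatic, but here $|\sigma|$ may be infinite, so the matching must be established structurally via the functorial construction $(\varphi,\tau)\mapsto(\tau^{tw^{-1}},\varphi)$ on $TMF_A(f)$ and its effect on cokernels, rather than by appeal to periodicity.
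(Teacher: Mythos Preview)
Your argument is correct, and the overall architecture matches the paper's: reduce to Buchweitz's equivalence $\underline{MCM}(B)\simeq D^b_{sg}(B)$, then show that the cokernel functor descends to an exact equivalence $hTMF_A(f)\to\underline{MCM}(B)$, using Construction \ref{resolution} for essential surjectivity and Remark \ref{rmk:full} for fullness. The identification of the shift via the short exact sequence $0\to\coker\varphi\to\overline{F^{tw^{-1}}}\to\coker\tau^{tw^{-1}}\to 0$ is exactly what the paper does as well, only the paper phrases it as a rotation of triangles in $D^b_{sg}(B)$ rather than as the cosyzygy in $\underline{MCM}(B)$.

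Where your route genuinely diverges is in two places. First, for the descent of $\mathcal{C}$ through null homotopies, you factor the induced cokernel map through $\overline{F^{tw^{-1}}}$ via $\overline{\tau^{tw^{-1}}}$; the paper instead factors the null-homotopic $\Psi$ itself through an auxiliary ``horseshoe'' twisted matrix factorization $(\gamma,\delta)$ built from $(\varphi',\tau')$ and shows that $\coker\gamma\cong B\otimes_A G'$ is free. Both work; yours is shorter. Second, and more substantively, your faithfulness argument is direct: the key lemma that $\mathcal{C}(\Xi)=0$ in $MCM(B)$ already forces $\Xi$ to be null-homotopic via $(s,0)$, combined with lifting a factorization through a trivial TMF, avoids any appeal to the triangulated machinery. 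The paper instead first proves that $\mathcal{GF}$ is injective on objects (showing that if $\coker\varphi$ is perfect then some twist of it, and hence $\coker\varphi$ itself, is free, whence $(\varphi,\tau)\cong 0$ by Lemma \ref{trivial}), and then deduces faithfulness from a general fact about triangulated functors (Orlov's Theorem 3.9). Your approach is more elementary and self-contained; the paper's is shorter once one is willing to import the triangulated lemma. One small point worth tightening in your sketch: when matching the image of a standard triangle with the distinguished triangle coming from the short exact sequence $0\to\coker\varphi'\to\coker\gamma\to\coker\tau^{tw^{-1}}\to 0$, you should check that after rotating and applying the natural isomorphism $\coker\tau^{tw^{-1}}\cong(\coker\varphi)[1]$, the resulting map $\coker\varphi\to\coker\varphi'$ really is $\mathcal{F}(\Psi)$; this is routine but not automatic (the paper is equally brief here).
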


\begin{proof} Since $\mathcal G$ is known to be an exact equivalence, it suffices to show $hTMF_A(f)\approx \underline{MCM}(B)$. 

The functor $\underline{\mathcal C}:TMF_A(f)\rightarrow
\underline{MCM}(B)$ factors through the projection to 
$hTMF_A(f)$ to complete the commutative diagram of functors
$$
\xymatrix{
TMF_A(f)\ar@{->}[r]^{\mathcal C}\ar@{->}[d]\ar@{->}[dr]^{\underline{\mathcal C}} &MCM(B)\ar@{->}[r]\ar@{->}[d] & D^b(B)\ar@{->}[d]\\
hTMF_A(f)\ar@{->}[r]^{\mathcal F}&\underline{MCM}(B)\ar@{->}[r]^{\mathcal G} & D^b_{sg}(B).\\
}
$$

To see this, it is enough to show that any null homotopic morphism
$\Psi:(\varphi,\tau)\rightarrow (\varphi',\tau')$ induces the zero map
in $\underline{MCM}(B)$. Specifically, we show the induced map
$\psi:{\rm coker}\ \varphi\rightarrow {\rm coker}\ \varphi'$ factors
through the graded projective module $B\tensor_A G'$. The morphism
$\Psi$ factors
as $$(\varphi,\tau)\xrightarrow{\Phi}(\gamma,\delta)\xrightarrow{\Pi}(\varphi',\tau')$$
through the twisted ``horseshoe'' factorization $(\gamma,\delta)$
where
$$\gamma:(G')^{tw}\oplus F'\xrightarrow{\begin{pmatrix}-\tau' & 0\\ {\rm id} & \varphi'\\ \end{pmatrix}} F'\oplus G',$$
$$\delta:(F')^{tw}\oplus (G')^{tw}\xrightarrow{\begin{pmatrix}-(\varphi')^{tw} & 0\\ {\rm id} & \tau'\\ \end{pmatrix}} (G')^{tw}\oplus F',$$
$\Phi_G=(s,\Psi_G)$, $\Phi_F=(t,\Psi_F)$ and $\Pi$ is the canonical projection onto the second factor. 

We claim ${\rm coker}\ \gamma = B\tensor_A G'$. For any $x\in F'$ and
$y\in G'$, $(x,y)=(0,y-\varphi'(x))$ in ${\rm coker}\ \gamma$. Thus
there is a surjection $G'\onto {\rm coker}\ \gamma$. The kernel of
this surjection consists of $z\in G'$ such that
$z=\varphi'\tau'(w)=fw$ for some $w\in (G')^{tw}$. So ${\rm coker}\
\gamma=G'/fG'=B\tensor_A G'$.

Now the induced maps ${\rm coker}\ \varphi\xrightarrow{\phi} B\tensor_A G'\xrightarrow{\pi} {\rm coker}\ \varphi'$ show the map $\psi$ induced by $\Psi$ factors through a graded projective module, hence is the zero map in $\underline{MCM}(B)$. Thus the functor $\mathcal F$ is well-defined. 

The triangulated structure on $\underline{MCM}(B)$ is induced by
$\mathcal G$, so to prove $\mathcal F$ is an exact functor, it
suffices to check that $\mathcal GF$ is exact. By Proposition
\ref{minRes}, $\mathbf{\Omega}((\varphi,\tau)[1])$ is exact. Thus
$$0\rightarrow{\rm coker}\ (-\varphi)\rightarrow B\tensor_A F^{tw^{-1}}\rightarrow {\rm coker}\ (-\tau^{tw^{-1}})\rightarrow 0$$
is a short exact sequence in $B$-\GrMod, and hence
$${\rm coker}\ (-\varphi)\rightarrow B\tensor_A F^{tw^{-1}}\rightarrow {\rm coker}\ (-\tau^{tw^{-1}})\rightarrow ({\rm coker}\ (-\varphi))[1]$$
is a distinguished triangle in $D^b_{sg}(B)$. Since $B\tensor_A
F^{tw^{-1}}$ is graded free, the first two morphisms are
zero. Rotating the triangle yields
$${\rm coker (-\tau^{tw^{-1}}})\isom({\rm coker}\ (-\varphi))[1] \isom ({\rm coker}\ \varphi)[1] =\mathcal (F(\varphi,\tau))[1]$$ in
$D^b_{sg}(B)$. Thus we have a natural isomorphism $(\mathcal F(\varphi,\tau))[1]\isom \mathcal F((\varphi,\tau)[1])$. That $\mathcal
F$ takes a standard triangle in $hTMF_A(f)$ to a distinguished triangle in
$D^b_{sg}(B)$ follows from this natural isomorphism, the fact that for
a morphism $\Psi:(\varphi,\tau)\rightarrow (\varphi',\tau')$,
$\mathbf{\Omega}(C(\Psi))$ is the mapping cone of
$\mathbf{\Omega}(\Psi):\mathbf{\Omega}(\varphi,\tau)\rightarrow\mathbf{\Omega}(\varphi',\tau')$,
and the usual property of mapping cones fitting into long exact
sequences in homology.

By Construction \ref{resolution}, $\underline{\mathcal C}$ is surjective on objects of $\underline{MCM}(B)$, hence the same is true of $\mathcal F$. 
Since $\underline{\mathcal C}$ is full by Remark \ref{rmk:full}, $\mathcal F$ is as well. 

To see that $\mathcal F$ is injective on objects, we show $\mathcal{GF}$ is. Suppose $\mathcal{GF}(\varphi,\tau)\isom 0$ in $D^b_{sg}(B)$. Then $M={\rm coker}\ \varphi$ admits a finite length graded free $B$-module resolution, so $\Ext_B^i(M,N)=0$ for all $N$ and all $i\gg 0$. By Proposition \ref{minRes}, $\mathbf{\Omega}(\varphi,\tau)$ is a graded free $B$-module resolution. Thus for some $n$, $\Ext_B^i({\rm coker}(1\tensor\varphi^{tw^n}),N)=0$ for all $N$ and all $i>0$. That is, ${\rm coker}(1\tensor\varphi^{tw^n})$ is graded free. As noted in the proof of Proposition \ref{minRes}, 
${\rm coker}(1\tensor\varphi^{tw^n})\isom M^{\overline{tw}^n}$. Since $M$ is free if and only if $M^{\overline{tw}}$ is, $M$ is graded free. By Lemma \ref{trivial}, $(\varphi,\tau)\isom 0$ in $hTMF_A(f)$.

That $\mathcal F$ is faithful now follows from the triangulated structure (see Theorem 3.9 of \cite{Orlov1}).
\end{proof}

Zhang proves in \cite{Zhang} that, among other properties, being
noetherian, AS regular or AS-Gorenstein is invariant under graded
Morita equivalence. Thus, in the case where $A$ is left noetherian and
AS-regular, the equivalence theorems of Section \ref{section:zhang}
imply equivalences of the corresponding categories of singularities.

\section{Examples}
\label{section:examples}

\begin{example}
\label{Heisenberg}
Let $V$ be a finite-dimensional vector space over a field $k$ with
skew-symmetric, nondegenerate form $\omega$. Assume $\dim V=2n\ge 4$
and let $\mathfrak h$ be the corresponding Heisenberg Lie algebra and
$U(\mathfrak h)$ its universal enveloping algebra. Then $U(\mathfrak
h)$ can be presented by generators $x_1,\ldots, x_n, y_1,\ldots, y_n$
subject to the relations
$$[x_i,x_j]=[y_i,y_j]=0$$
$$ [x_i,y_j]=0\text{ for $i\neq j$}$$
$$[x_1,y_1]=[x_2,y_2]=\cdots=[x_n,y_n]$$
Since $\mathfrak h$ is a finite-dimensional Lie algebra, $U(\mathfrak
h)$ is Artin-Schelter regular of dimension $2n+1$. The element
$f=[x_1,y_1]$ is central and regular, and $B=U(\mathfrak h)/(f)\isom
k[x_1,\ldots, x_n, y_1,\ldots, y_n]$ is a commutative polynomial
ring. By Hilbert's Syzygy Theorem, every finitely generated left
$B$-module has a finite minimal graded free resolution. Thus there
exist no nontrivial reduced twisted left matrix factorizations of $f$.
\end{example}

\begin{example}
\label{ex:ore}
Let $A=k[x,y][w;\zeta]$ be a graded Ore extension of a commutative
polynomial ring in two variables by a graded automorphism $\zeta$,
where $wg=\zeta(g)w$ for all $g\in k[x,y]$.  Then $w^2$ is regular and
its normalizing automorphism is $\sigma=\zeta^{-2}$.  After choosing bases,
we define homomorphisms $\varphi:F\rightarrow G$ and $\tau:G^{\sigma}\rightarrow
F$ of graded free left $A$-modules via right multiplication by the matrices
$$[\varphi] = \begin{pmatrix} w& -\zeta(x)\\ 0&w\\ \end{pmatrix}\qquad\text{and}\qquad [\tau]=\begin{pmatrix}w & \zeta^2(x)\\ 0&w\\ \end{pmatrix}$$ 
Note $[\varphi^{\sigma}]=\begin{pmatrix}w&-\zeta^3(x)\\0&w\\ \end{pmatrix}$.
A straightforward verification shows that $\varphi\tau=\lambda_{w^2}$ and
$\tau\varphi^{\sigma}=\lambda_{w^2}$. (We remind the reader that
since we work with left modules, the composition is computed by
multiplying matrices in the opposite order.) Examining for
periodicity, we see that the minimal resolution
$\mathbf{\Omega}(\varphi,\tau)$ is periodic of period $p$ if and only
if $\zeta^p(x)=cx$ for some integer $p$ and scalar $c$. This example
suggests a useful method for constructing twisted factorizations with
desired properties. For example, if $\zeta(x)=x+y$ and $\zeta(y)=qy$
where $q$ is a primitive $n$-th root of unity, then the resolution is
periodic of period $n$.

As another example, taking $\zeta(x)=(x+y)/2$ and $\zeta(y)=y/2$ we
obtain a resolution which is not periodic. But it is interesting to
note that since $\zeta^n(x)\rightarrow 0$ as $n\rightarrow \infty$,
the limiting matrix $\begin{pmatrix} w & 0\\ 0 & w\\ \end{pmatrix}$
defines a minimal resolution which is periodic of period 1. In this
sense, the resolution becomes periodic after infinitely many steps.

In any case, extend $\zeta$ to a graded automorphism of $A$ by
$\zeta(w)=w$. Let $Z=\{\zeta^n~|~n\in\Z\}$ be the associated twisting
system. The twisted multiplication in $A^Z$ gives
$$g\ast w^2 = \zeta^{2}(g)w^2= w^2g = w^2\ast g$$
for all $g\in A$ so $w$ is central in $A^{Z}$. By Proposition
\ref{periodicity}, every twisted matrix factorization of $w^2$ over
$A^Z$ gives rise to a minimal graded free resolution of period at most
2.

\end{example}

\begin{example}
\label{Sklyanin}
Let $A=k\la x, y, z\ra/\la r_1, r_2, r_3\ra$ where 
\begin{align*}
r_1 &= yz+zy-x^2\\ 
r_2 &= xz+zx-y^2\\ 
r_3 &= xy+yx-z^2
\end{align*}

The algebra $A$ is a nondegenerate 3-dimensional Sklyanin algebra. The
element $g=2*(y^3+xyz-yxz-x^3)$ (the factor of 2 is only to clean up the twisted matrix factorization)
is central and regular in $A$, so $\sigma={\rm id}_A$. Let

$$\varphi = \left(\begin{smallmatrix} x & y & z & 0\\ -yz-2x^2 & -yx & zx-xz & x\\ xy-2yx & xz & -x^2 & y\\ -y^2-zx & x^2 & -xy & z\\ \end{smallmatrix}\right)
$$
and 
$$\tau=\left(\begin{smallmatrix} -zy & -x & z & y\\zx-xz & z & -y & x\\ xy & y & x & -z\\ 2xyz-4x^3 & -2x^2 & 2y^2 & 2(xy-yx)\\ \end{smallmatrix}\right)$$
be matrices with entries in $A$. One can check (it is not trivial) that $\tau\varphi=\varphi\tau=gI_4$. This matrix factorization produces a minimal resolution of the second syzygy module in a minimal resolution of the trivial module $_Bk$. Indeed if we put
$$M_2=\left(\begin{smallmatrix} -x & z & y\\ z & -y & x &\\ y & x & -z\\ -2x^2 & 2y^2 & 2(xy-yx)\\ \end{smallmatrix}\right)\qquad M_1=\left(\begin{smallmatrix} x\\ y\\ z\\ \end{smallmatrix}\right)$$
then 
$$\cdots \xrightarrow{\overline{\varphi}} B(-5)^3\oplus B(-6)\xrightarrow{\overline{\tau}} B(-3)\oplus B(-4)^3\xrightarrow{\overline{\varphi}}$$ $$B(-2)^3\oplus B(-3)\xrightarrow{M_2} B(-1)^3\xrightarrow{M_1} B$$
is a minimal graded free left $B$-module resolution of $_Bk$. 

\end{example}

\begin{example}
  Let $A=k_{q}[x,y]$ be the skew polynomial ring where $yx=qxy$ for
  some fixed $q\in k^{\times}$. Let $g$ be the graded automorphism of
  $A$ given by $g(x)=\lambda x$ and $g(y)=\lambda^{-1}y$ where
  $\lambda$ is a primitive $n$-th root of unity.  Let $G = \langle g
  \rangle$, the cyclic group of order $n$, act on $A$ with invariant
  subring $A^{G}$.  Classically (when $q=1$), this is an $A_n$
  Kleinian singularity.  It is not hard to check that $A^G$ is
  generated by $X:=x^n$, $Y:=xy$, and $Z:= y^n$, and $A^G \cong
  C/(\omega)$, where
$$C=k\la X,Y,Z\ra/\la YX- q^{n}XY, ZX - q^{n^2}XZ, ZY- q^{n}YZ\ra$$
is a skew polynomial ring and $\omega:= XZ-q^{-{\binom{n}{2}}}Y^n$ is a regular normal element of $C$ \cite[Case 2,2]{KKZ}.
Let $C$ be graded by setting $\deg X = \deg Z = n$, and $\deg Y = 2$.  Note that $C$
is noetherian and AS-regular of dimension 3 and one has relations
$$\omega X = q^{n^2} X\omega , \;\; \omega Y = Y \omega, \;\;\text{and}\;\; \omega Z = q^{-n^2} Z \omega.$$
The sets $M_j= \{a \in A~|~ g(a) = \lambda^j a\}$ for $0\le j<n$ are graded left $R=A^G$ modules, generated by $x^j$ and $y^{n-j}$. Note that $M_0=R$, and henceforth assume $j\neq 0$.

As a module over $C$, a minimal resolution of $M_j$ has the form
$$0 \longrightarrow C(-2n+j)\oplus C(-n-j) \stackrel{G_j}{\longrightarrow} C(-j) \oplus C(-n+j) \stackrel{D}{\longrightarrow} M_j \longrightarrow 0,$$
where the maps are given by right multiplication by\footnote{We adopt the usual convention that $\binom{k}{l} = 0$ for $k < l$.}:
$$ D:=
\begin{pmatrix}
x^j\\
y^{n-j}
\end{pmatrix}\text{ and }
G_j := \begin{pmatrix}
-q^{-{\binom{n-j}{2}}}Y^{n-j} & q^{(n-j)j}X\\
-Z & q^{nj-{\binom{j}{2}}} Y^j
\end{pmatrix}$$
Thus $\pd_C M_j = 1$, and hence $M_j$ is a maximal Cohen-Macaulay $R$-module. 
It is worth noting that when $q=1$, the $M_j$ form a complete set of maximal Cohen-Macaulay $R$-modules \cite[Example 5.25]{LW}.

Next, observe that
$$G_{n-j}G_j = \begin{pmatrix} -q^{(n-j)j} \omega & 0\\ 0 & -q^{(n-j)j+n^2} \omega \end{pmatrix} = G_j G_{n-j}$$
This shows $0\rightarrow
M_{n-j}(-n)\xrightarrow{\overline{G}_j}R(-j)\oplus
R(-n+j)\xrightarrow{D} M_j\rightarrow 0$, where $\overline{G}_j$ is
the $R$-module map induced on ${\rm coker}\ G_{n-j}$ by $G_j$, is an
exact sequence of $R$-modules. So a minimal graded $R$-module
resolution of $M_j$ is periodic of period at most 2 for every $0<
j<n$. (When $n=2$, the resolution has period 1.) With a small
adjustment, we obtain a complex arising from a twisted matrix
factorization of $\omega$.  Let
$$\Delta:= \begin{pmatrix}
-1 & 0\\
0 & -q^{n^2}
\end{pmatrix},
\quad N_j := G_j \Delta^{-1} = \begin{pmatrix}
q^{-{\binom{n-j}{2}}}Y^{n-j} & -q^{(n-j)j-n^2}X\\
 Z & -q^{nj -{\binom{j}{2}} - n^2} Y^j
\end{pmatrix},$$
and $P_{n-j} := q^{-(n-j)j}G_{n-j}$.  Then we have
$$P_{n-j} := \begin{pmatrix}
-q^{-{\binom{j}{2}} - j(n-j)}Y^{j} & X\\
-q^{-(n-j)j}Z & q^{(n-j)^2- {\binom{n-j}{2}}} Y^{n-j}
\end{pmatrix},$$ and
$$N_j^\sigma = \begin{pmatrix}
q^{-{\binom{n-j}{2}}}Y^{n-j} & -q^{(n-j)j}X\\
 q^{-n^2} Z & -q^{nj -{\binom{j}{2}} - n^2} Y^j
\end{pmatrix}.$$ Finally we have
$P_{n-j} N_j = \omega I = N_j^\sigma P_{n-j}$ as desired.
We note that $|\sigma| = |q|$, which can be an arbitrary positive integer or infinite.
\end{example}

\bibliographystyle{amsplain}
\bibliography{bibliog2}

\end{document}